\DeclareMathAlphabet{\mathpzc}{OT1}{pzc}{m}{it}
\newtheorem{theorem}{Theorem}[section]
\newtheorem{lemma}[theorem]{Lemma}
\newtheorem{notation}[theorem]{Notation}
\newtheorem{proposition}[theorem]{Proposition}
\newtheorem{corollary}[theorem]{Corollary}
\theoremstyle{definition}
\theoremstyle{remark}
\newtheorem{remark}[theorem]{Remark}
\numberwithin{equation}{section}
\newcommand{\R}{\mathbb{R}}
\newcommand{\V}{\mathcal{V}}
\newcommand{\Hb}{\mathcal{H}}
\newcommand{\Lh}{\mathcal{L}}
\newcommand{\1}{\mathbf{1}}
\newcommand{\dif}{\mathrm{d}}
\newcommand\sgn{\mathrm{sgn}}
\newcommand{\Rnum}{\mathbb{R}}
\newcommand{\abs}[1]{\left\vert#1\right\vert}
\newcommand{\set}[1]{\left\{#1\right\}}
 \newcommand{\innp}[1]{\langle {#1}\rangle}
\date{}
\begin{document}
\title[Improved Berry-Ess\'{e}en Bound of LSE for fOU ]{An Improved Berry-Ess\'{e}en Bound of Least Squares Estimation for Fractional Ornstein-Uhlenbeck Processes\footnote{ This english version is translated by Hanxiao GENG from a Chinese version submitted to Acta Mathematica Scientia.}}

\author[Y. Chen]{Yong CHEN }
\address{Center for Applied Mathematics, School of Mathematics and Statistics, Jiangxi Normal University, 330022, Nanchang,  P. R. China} 
\email{zhishi@pku.org.cn}
\author[X. Gu]{Xiangmeng  {GU}}
\address{School of Mathematics and Statistics, Jiangxi Normal University, 330022, Nanchang,  P. R. China}
\begin{abstract}
 The aim of this paper is twofold. First, it offers a novel formula to calculate the inner product of the bounded variation function in the Hilbert space $\Hb$ associated with the fractional Brownian motion with Hurst parameter $H\in (0,\frac12)$. This formula is based on a kind of decomposition of the Lebesgue-Stieljes measure of the bounded variation function and the integration by parts formula of the Lebesgue-Stieljes measure. Second, as an application of the formula, we explore that as $T\to\infty$, the asymptotic line for the square of the norm of the bivariate function $f_T(t,s)=e^{-\theta|t-s|}1_{\{0\leq s,t\leq T\}}$  in the symmetric tensor space $\Hb^{\odot 2}$ (as a function of $T$), and improve the Berry-Ess\'{e}en type upper bound for the least squares estimation of the drift coefficient of the fractional Ornstein-Uhlenbeck processes with Hurst parameter $H\in (\frac14,\frac12)$. The asymptotic analysis of the present paper is much more subtle than that of Lemma 17 in Hu, Nualart, Zhou(2019) and the improved Berry-Ess\'{e}en type upper bound is the best improvement of the result of Theorem 1.1 in Chen, Li (2021). As a by-product, a second application of the above asymptotic analysis is given, i.e., we also show the Berry-Ess\'{e}en type upper bound for the moment estimation of the drift coefficient of the fractional Ornstein-Uhlenbeck processes where the method is obvious different to that of Proposition 4.1 in Sottinen, Viitasaari(2018). \\
{\bf Keywords:} Fractional Brownian motion; Fractional Ornstein-Uhlenbeck process; Berry-Ess\'{e}en bound.\\
{\bf MSC 2010:} 60G15; 60G22; 62M09.
\end{abstract}

 \maketitle

	\section{Introduction}
	Unless otherwise specified, the Hurst parameter in this paper is always assumed to be $H\in(0,\frac12)$. This article has two main purposes. One is to improve the~Berry-Ess\'{e}en bound of the least squares estimation of the drift coefficients of the fractional~Ornstein-Uhlenbeck process based on continuous sample observations. The second is to give an easy to calculate formula for the inner product of Hilbert space $\Hb$ connected by fractional Brownian motion when it is restricted to bounded variation function. For the two purposes of this paper, the former can be regarded as a very effective application of the latter. In addition, as an accessory product, we also give the second application of the latter: The method of proving~Berry-Ess\'{e}en  bound for moment estimation of drift coefficients of fractional~Ornstein-Uhlenbeck process is different from that of Proposition 4.1 of \cite{SV 18} and Theorem 5.4 of \cite{Dou 22}. The conclusions of this paper are novel. It is particularly worth emphasizing that, as far as we know, there is no alternative method to obtain the upper bound of the improved ~Berry-Ess\'{e}en class for the least squares estimation of drift coefficients. In addition, we also point out that the binary function in symmetric tensor space $\Hb^{\odot 2}$ obtained by using this method
	\begin{equation}\label{ftst defn}   
	 f_T(t,s)=e^{-\theta|t-s|}1_{\{0\leq s,t\leq T\}}
	\end{equation}
	The asymptotic property of norm square (see Proposition~\ref{key1}) is much more precise than that of Lemma 17 in~\cite{HuNualartandZhou2019}.
    
    Specifically, we consider the fractional~Ornstein-Uhlenbeck process based on continuous time observation
		\begin{align}
		\dif X_t=-\theta X_t\dif t+\sigma\dif B_t^H,\,\,\,\,\,X_0=0,\,\,\,\,\,0\leq t\leq T,\label{OUED}
	\end{align}
	Berry-Ess\'{e}en class upper bounds for two kinds of estimators of drift coefficients, including~$\theta>0$ is the drift coefficient~$\sigma>0$ is the volatility coefficient, $B_t^H$ is the one-dimensional fractional Brownian motion with~Hurst parameter $H$, and its covariance function is given by the following formula:
	\begin{equation}\label{r_h}
		R_{H}(t,s)=\frac12(t^{2H}+s^{2H}-|t-s|^{2H}).
	\end{equation}

	Without losing generality, the following is constant $\sigma=1$. Reference~\cite{HuNualartandZhou2019} minimized the following formula
	\begin{align}
		\int^T_0|\dot{X}_t+\theta X_t|^2\dif t,\label{lse}
	\end{align}
and calculating the limit of the second moment of the sample (orbit) of the OU process
	\begin{equation}
	    \lim_{T\to\infty} \frac{1}{T} \int_0^T X_t^2\mathrm{d} t,
	\end{equation}
When traversal is constructed (i.e.~$\theta>0$), the least squares estimation and moment estimation of the drift coefficient are respectively:
	\begin{align}
		\hat{\theta}_T&=-\frac{\int^T_0X_t\dif X_t}{\int^T_0X_t^2\dif t}=\theta-\frac{\int^T_0X_t\dif B^H_t}{\int^T_0X_t^2\dif t},\label{Lse}\\
		\tilde{\theta}_{T}&=\Big( \frac{1}{ {H} \Gamma(2H ) T} \int_0^T X_t^2\mathrm{d} t \Big)^{-\frac{1}{2H}}.\label{theta tilde formula}
	\end{align}
	
    As in reference~\cite{HuNualartandZhou2019}, this paper does not discuss the meaning of the first random integral about the fractional OU process $X_t$ at the right end of \eqref{Lse}, but only regards it as a formal integral, that is, it is only understood as substituting the direct form of equation \eqref{OUED} into the integral. The second random integral about $B_t^H$ at the right end of \eqref{Lse} obtained after substitution is understood as a divergent (or skorohold) integral about fractional Brownian motion, but its meaning as a statistic in the sense of standard statistics is not studied. Of course, the statistical meaning of the second statistic moment estimation is completely clear.
	
	Further, by verifying the fourth order moment theorem, reference \cite{HuNualartandZhou2019} gives the strong convergence and asymptotic normality of the least squares estimate and the moment estimate. The two asymptotic properties of the norm of the binary function $f_T(t,s)$ and its contraction are the key steps. For the former, they use a formula of the inner product of space $\Hb$ and tensor space $\Hb^{\otimes 2}$, see \eqref{inner product_1} for details. This formula is the expression formula for the inner product of bounded variation function in Hilbert space associated with the general second moment process given by the integral by parts formula in combination with \cite{Jolis}: the inner product is equal to the integral of the product of the covariance function of the second moment process with respect to the measure derived from two bounded variation functions. For the norm of the compression of binary function $\frac{1}{\sqrt T}f_T(t,s)$, they use Fourier transform to prove that it tends to zero, see \eqref{flybh njff}.

    Based on the above results in \cite{HuNualartandZhou2019}, reference~\cite{chenandli2021} gives the convergence rate between the distribution of the least squares estimate and its asymptotic distribution, that is, the upper bound of~Berry-Ess\'een class: when $H\in (0,\frac12)$ and $T$ are sufficiently large, random variable
    $$\sqrt{T}(\hat{\theta}_T-\theta)$$
    and the upper bound of Kolmogorov distance of normal random variable is~$T^{-\beta}$, we have:
	\begin{align}
		\beta=
		\begin{cases}
		   \frac12, & H\in[0,\frac14],\\
			1-2H, & H\in(\frac14,\frac12).
		\end{cases}\label{beta}
	\end{align}

Here, the method of proving the Berry-Ess\'een bound of the least squares estimate is based on the Corollary~1 of \cite{Kim3} and two asymptotic analyses of the binary function $f_T(t,s)$. The method to prove the Berry-Ess\'een bound of moment estimation is to transform the fourth order moment into the two asymptotic analyses of the binary function $f_T(t,s)$ through the multiplication formula of multiple Wiener integrals, and to estimate the inner product of $f_T(t,s)$ and $h_T(t,s)$ (see \eqref{ht ts}), see \cite{CZ 21} and \cite{Chen guli 22}. Different from this, Proposition 4.1 of \cite{SV 18} and Theorem 5.4 of \cite{Dou 22}, the proof of the Berry-Ess\'een bound for moment estimation is to transform the fourth-order moment into an asymptotic analysis of the stationary solution of the fractional OU process by the Wick formula, and the latter is known, see \cite{Cheridito 03}.

    Review (\ref{beta}), when $H=\frac12-\varepsilon$ and $\varepsilon$ sufficiently small, $\beta$ tends to zero. This is the same as when $H=\frac12$, the known Berry-Ess\'een bound of $\sqrt{T}(\hat{\theta}_T-\theta)$ is $\frac{1}{\sqrt{T}} $, which is very far away, so a reasonable guess is:
\begin{center}\label{caice 1}
    ``when $H\in (\frac14, \frac12)$, 
    the upper bound of Berry-Ess\'{e}en class is still $\frac{1}{\sqrt{T}}. $''
\end{center} 
We will prove this conjecture in this paper. It can be seen from the proof of Theorem 1.1 in \cite{CZ 21} that the key problem is a more precise asymptotic analysis of the norm of the bivariate function $f_T(t,s)$. Therefore, obtaining the asymptotic analysis of this binary function norm is the key step of this paper, and we describe the result of this asymptotic analysis as the following theorem:

\begin{theorem}\label{qq1}
Let~$\theta>0,\,H\in(0,\frac12)$. For binary function $f_T(t,s)$ in space ${\Hb}^{\otimes2}$, see \eqref{ftst defn}, There is a normal number $C_{H,\theta}$ that does not depend on $T$, so that when $T$ is sufficiently large, there is an inequality
	\begin{align}\label{main}
	\Big|	\|f_T\|^2_{{\Hb}^{\otimes2}}-2(H\Gamma(2H))^2\sigma_H^2 T\Big|\le  C_{H,\theta}
		\end{align}
	holds, where
	\begin{equation}\label{sigmah2}
	    \sigma_H^2=(4H-1)+\frac{2\Gamma(2-4H)\Gamma(4H)}{\Gamma(2H)\Gamma(1-2H)}.
	\end{equation} 
\end{theorem}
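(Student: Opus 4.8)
The plan is to compute $\|f_T\|^2_{\Hb^{\otimes2}}$ explicitly through the novel inner product formula \eqref{inner product_1} and then peel off the part that is linear in $T$. Because $\Hb^{\otimes2}=\Hb\otimes\Hb$ and the function $f_T$ from \eqref{ftst defn} is symmetric, that formula rewrites the norm as a fourfold integral of $R_H(t_1,t_2)R_H(s_1,s_2)$, with $R_H$ the covariance \eqref{r_h}, against the signed Lebesgue--Stieltjes measure $\nu_T=\partial_t\partial_s f_T$ taken in the variable pairs $(t_1,s_1)$ and $(t_2,s_2)$. The first step is to write $\nu_T$ down: off the edges of $[0,T]^2$ one has $\partial_t\partial_s e^{-\theta|t-s|}=-\theta^2e^{-\theta|t-s|}+2\theta\,\delta_{\{t=s\}}$, so $\nu_T$ is the sum of an absolutely continuous interior density, a singular mass carried by the diagonal $\{t=s\}$, and boundary pieces supported on the edges $t\in\{0,T\}$, $s\in\{0,T\}$ and on the four corners produced by the indicator $1_{\{0\le s,t\le T\}}$. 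Substituting this decomposition into the tensor product $\nu_T\otimes\nu_T$ turns the norm into a finite sum of explicit integrals, which is exactly the mechanism the Lebesgue--Stieltjes decomposition underlying the formula is designed to exploit.

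Next I would isolate the single ``bulk'' term carrying the linear growth. Using $\partial_u\partial_v R_H(u,v)=H(2H-1)|u-v|^{2H-2}$ together with the integration-by-parts step of the formula, the interior contributions become convolutions of $e^{-\theta|\cdot|}$ and the diagonal mass against the kernel $|u-v|^{2H-2}$; in the interior these are translation invariant, so they contribute a factor $T$ times a fixed, $T$-independent integral over $\R$. Evaluating that limiting integral is essentially a Beta-function computation: the factor $\Gamma(4H)\Gamma(2-4H)$ is the integral $\int_0^\infty r^{4H-1}(1+r)^{-2}\,\dif r=B(4H,2-4H)$ appearing in the second term of $\sigma_H^2$ in \eqref{sigmah2}, while an elementary interior integral supplies the $(4H-1)$ term. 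The remaining constants $H(2H-1)$, reorganised via the reflection identity $\Gamma(2H)\Gamma(1-2H)=\pi/\sin(2\pi H)$, assemble into the prefactor $2(H\Gamma(2H))^2$ and the normaliser $\frac{1}{\Gamma(2H)\Gamma(1-2H)}$. Conceptually this limiting constant is (twice) the integrated square of the stationary autocovariance of the process analysed in \cite{Cheridito 03}, which explains why $H\Gamma(2H)$ enters.

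The decisive step, and the main obstacle, is to show that the entire remainder — the edge and corner integrals, together with the difference between the finite-$T$ bulk integral and its $T\to\infty$ limit — is bounded uniformly in $T$, i.e. $O(1)$ rather than the $o(T)$ that the argument of Lemma~17 in \cite{HuNualartandZhou2019} yields. Two features make this delicate. First, for $H\in(0,\frac12)$ the kernel $|u-v|^{2H-2}$ is not integrable across the diagonal in a single variable, so every term pairing the diagonal mass with an interior or diagonal factor must be handled by integrating one variable out first, turning $|u-v|^{2H-2}$ into the locally integrable $|u-v|^{2H-1}$ before any estimate is applied. Second, the boundary integrals and the tail of the bulk integral individually carry candidate sub-linear growth such as $T^{4H-1}$ or $\log T$, and the whole point is that the exponential factors $e^{-\theta(\cdots)}$ confine these corrections to an $O(1)$ neighbourhood of the edges; I would make this quantitative by expanding the finite-$T$ bulk integral as its limit minus explicit edge tails, bounding each tail with the exponential decay so as to remove the dependence on the distance to the boundary, and then checking that the surviving powers of $T$ cancel. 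Controlling these cancellations sharply enough to keep the error at $O(1)$ — rather than a positive power of $T$ — is the heart of the improvement over \cite{HuNualartandZhou2019} and the crux of \eqref{main}.
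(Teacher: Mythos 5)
Your proposal contains a genuine gap at its central step: the claim that, after decomposing $\nu_T$ into an interior density, a diagonal Dirac mass and boundary pieces, each ``bulk'' pairing against $R_H(t_1,t_2)R_H(s_1,s_2)$ is translation invariant and hence contributes $T$ times a fixed constant. This is false, because $R_H$ itself is not translation invariant --- only its mixed derivative $\alpha_H|u-v|^{2H-2}$ is --- and the individual pieces of your measure decomposition do not inherit the zero-total-mass (increment) structure of the full measure $\nu_T$ on which the Jolis formula relies. Concretely, the diagonal$\times$diagonal pairing equals $4\theta^2\int_{[0,T]^2}R_H(u,v)^2\,\dif u\,\dif v\sim T^{4H+2}$, and the interior$\times$interior pairing likewise contains terms of order $T^{4H+2}$ (e.g.\ from the $t_1^{2H}s_1^{2H}$ part of $R_H R_H$); the $O(T)$ answer emerges only through cancellations \emph{across} your pairings, which your sketch neither notices nor controls --- indeed it asserts each such term is already $O(T)$. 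Nor can you restore translation invariance by integrating by parts back onto $R_H$: the fully integrated-by-parts kernel $|t_1-t_2|^{2H-2}|s_1-s_2|^{2H-2}$ gives a divergent quadruple integral for $H<\frac12$ (this is exactly the failure of \eqref{neijibiaochu 0}), while a single integration by parts leaves the non-stationary term $Hs_1^{2H-1}$ in $\partial R_H/\partial s_1$, about which your proposal says nothing.

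Note also that the singularity you localize (the kink of $e^{-\theta|t-s|}$ on the intra-factor diagonal $t=s$) is not where the obstruction lives: the non-integrable kernel sits on the inter-factor diagonals $t_1=t_2$ and $s_1=s_2$. That is what the paper's proof is built around. It decomposes the \emph{domain} into a strip $\kappa_3$ of fixed width about $\{s_1=s_2\}$ and the off-strip regions $\kappa_1,\kappa_2$ (Proposition \ref{key2}): off the strip the kernel $|s_1-s_2|^{2H-2}$ is bounded, so two integrations by parts are legitimate and produce genuinely translation-invariant integrands; on the strip only one integration by parts is performed, and --- this is the device your proposal is missing --- the non-stationary $s_1^{2H-1}$ term is annihilated exactly because the strip measure has total mass zero (Corollary \ref{coro-3-chen1}). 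Only after these two steps does each piece $M_{ij}(T)$ admit a linear asymptote $A_{ij}T+C_H$ (the Appendix lemmas), and the paper then sidesteps your Beta-function evaluation of the slope altogether, identifying it from the known limit \eqref{zhou jielun} by uniqueness of limits. The ingredients you do have right --- the distributional identity $\partial_t\partial_s e^{-\theta|t-s|}=-\theta^2e^{-\theta|t-s|}+2\theta\delta_{\{t=s\}}$, the need to reduce $|u-v|^{2H-2}$ to the locally integrable $|u-v|^{2H-1}$ near a diagonal, the exponential confinement of boundary effects, the Beta/reflection identities behind $\sigma_H^2$ --- would all reappear inside such a scheme, but without the strip decomposition and the zero-mass cancellation your individual terms are polynomially larger than $T$ and the proposed bookkeeping collapses.
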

\begin{remark}
\begin{enumerate}
\item The upper bound given by formula \eqref{main} in this paper is a constant $C_{H,\theta}$, which is independent of $T$, that is, the order of the upper bound of $T$ is 0. In contrast, the upper bound corresponding to the result of Lemma 3.11 in \cite{chenandli2021} is $T^{2H}$, that is, the order of the upper bound of $T$ is $2H$. Furthermore, the upper bound in this paper is the best upper bound in the sense of the asymptote below.
    \item In fact, the conclusion obtained in this paper is stronger than \eqref{main}. That is, this paper actually obtains the square of the norm of the binary function $f_T(t,s)$, as a function of $T$, the asymptote when $T\to \infty$:
	\begin{align}
		\lim_{T\rightarrow\infty}
		\big(\|f_T\|_{{\Hb}^{\otimes2}}^2-2(H\Gamma(2H))^2\sigma_H^2 T\big)={C}_H,  \label{jianjinxian mubiao}
	\end{align}
    Here ${C}_H\in\R$ is a constant that depends only on~$H$ and is independent of $T$. See the proof of Theorem 1.1 in Section 3 of this paper for details. We emphasize that in this paper, the intercept term ${C}_H$ of the asymptote is irrelevant, while the existence and slope of the asymptote play a key role.
    \item The standard $o, O$ symbols in asymptotic analysis are used to compare Lemma 17 in \cite{HuNualartandZhou2019}, Lemma 3.11 in \cite{chenandli2021}, and the formula \eqref{main} in this paper as follows: when $T\to\infty$, we have:
	\begin{align}
	    \frac{1}{T}\|f_T\|^2_{{\Hb}^{\otimes2}}-2(H\Gamma(2H))^2\sigma_H^2&= o(1),\label{zhou jielun}\\
	    \frac{1}{T}\|f_T\|^2_{{\Hb}^{\otimes2}}-2(H\Gamma(2H))^2\sigma_H^2&= O(T^{2H-1}),\label{chenli jielun}\\
	    \frac{1}{T}\|f_T\|^2_{{\Hb}^{\otimes2}}-2(H\Gamma(2H))^2\sigma_H^2&= O(T^{-1}).\label{benwen jielun}
	\end{align}
	As a comparison, the method of Lemma 17 in \cite{HuNualartandZhou2019} can obtain formula \eqref{zhou jielun} succinctly, and the method of Lemma 3.11 in \cite{chenandli2021} is based on Lemma 17 in \cite{HuNualartandZhou2019}. However, this method cannot be further improved, that is, the above formula \eqref{benwen jielun} cannot be obtained. In other words, the method of using the new formula for calculating the inner product of fractional Brownian motion given in this paper is, as far as we know, still irreplaceable.
	\end{enumerate}
\end{remark}
Starting from the asymptotic analysis given by the above theorem, the following theorem shows that when $H\in(\frac14,\frac12)$, the improved Berry-Ess\'een bound of the least squares estimate is the $\frac{1}{\sqrt{T}}$ guessed above, and as a by-product, the Berry-Ess\'een bound of the moment estimate is also $\frac{1}{\sqrt{T}} $:

\begin{theorem}\label{zhuyaodl 2}
Let $Z$ be a standard normal random variable and $H\in(0,\frac12)$. Then there is a normal number $C_{\theta, \,H}$, and it does not depend on $T$, so that when $T$ is large enough, there is \textnormal{Berry--Ess\'{e}en} inequality
\begin{align}
    \sup_{z\in \Rnum}\abs{P(\sqrt{\frac{T}{\theta \sigma^2_H}} (\hat{\theta}_T-\theta )\le z)-P(Z\le z)}&\le\frac{ C_{\theta, H}}{\sqrt{ T} }; \label{zuixiaoerch B_E }\\ 
\sup_{z\in \Rnum}\abs{P(\sqrt{\frac{4H^2 T}{\theta \sigma^2_{H}}} (\tilde{\theta}_T-\theta )\le z)-P(Z\le z)}&\le\frac{ C_{\theta, H}}{\sqrt{ T} };\label{juguji de B_E }
\end{align}
hold, where $\sigma^2_{H}$ as \eqref{sigmah2}.
\end{theorem}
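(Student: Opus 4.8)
The plan is to reduce each estimator to a normalized element of the second Wiener chaos and apply the Malliavin--Stein Berry--Ess\'{e}en bound for the second chaos (Corollary~1 of \cite{Kim3}), which controls the Kolmogorov distance between a normalized second-chaos variable $I_2(h)/(\sqrt{2}\,\|h\|_{\Hb^{\otimes2}})$ and $Z$ by a constant multiple of the normalized first contraction $\|h\otimes_1 h\|_{\Hb^{\otimes2}}/\|h\|^2_{\Hb^{\otimes2}}$. The role of Theorem~\ref{qq1} is to let one replace the exact standard deviation appearing in this normalization by the deterministic factor in the statement: the resulting error is of the order of the relative variance defect, which by Theorem~\ref{qq1} is $O(1/T)$ rather than the $O(T^{2H-1})$ available from Lemma~3.11 of \cite{chenandli2021}. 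This is precisely the point at which the sharpened asymptotics yield the improvement to the optimal rate $1/\sqrt{T}$.

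\textbf{Least squares estimator.} Writing $X_t=I_1(g_t)$ with $g_t=e^{-\theta(t-\cdot)}1_{[0,t]}$, the Skorohod numerator in \eqref{Lse} is a pure second-chaos variable whose symmetric kernel is a constant multiple of $f_T$, namely $\int_0^T X_t\,\delta B_t^H=\tfrac12 I_2(f_T)$, with variance $\tfrac12\|f_T\|^2_{\Hb^{\otimes2}}$. First I would prove a Berry--Ess\'{e}en bound for the normalized numerator: by Corollary~1 of \cite{Kim3} its Kolmogorov distance to $Z$ is at most $C\,\|f_T\otimes_1 f_T\|_{\Hb^{\otimes2}}/\|f_T\|^2_{\Hb^{\otimes2}}$, which is $O(1/\sqrt{T})$ once the (inherited) contraction estimate $\|f_T\otimes_1 f_T\|_{\Hb^{\otimes2}}=O(\sqrt{T})$ is combined with Theorem~\ref{qq1}. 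Then I would pass from the normalized numerator to $\sqrt{T/(\theta\sigma_H^2)}(\hat\theta_T-\theta)$, a ratio with denominator $\tfrac1T\int_0^T X_t^2\,dt$, by a standard transfer lemma (as used in \cite{chenandli2021}); this costs an extra term governed by $\big\|\tfrac1T\int_0^T X_t^2\,dt-c_\infty\big\|_{L^2}$ and by the variance defect, both $O(1/\sqrt{T})$, so the rate is preserved.

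\textbf{Moment estimator.} The product formula $I_1(g_t)^2=I_2(g_t\otimes g_t)+\|g_t\|_{\Hb}^2$ gives, after integrating in $t$,
\[
\int_0^T X_t^2\,dt-\mathbb{E}\!\int_0^T X_t^2\,dt=I_2(g_T),\qquad g_T(u,v)=\tfrac1{2\theta}\big(e^{-\theta|u-v|}-e^{-\theta(2T-u-v)}\big)1_{[0,T]^2},
\]
so $g_T=\tfrac1{2\theta}f_T$ plus a boundary term whose self-contraction and cross-contraction with $f_T$ are of lower order. Hence $\|g_T\|^2_{\Hb^{\otimes2}}$ and $\|g_T\otimes_1 g_T\|_{\Hb^{\otimes2}}$ are governed by $\tfrac1{4\theta^2}\|f_T\|^2_{\Hb^{\otimes2}}$ and $\tfrac1{4\theta^2}\|f_T\otimes_1 f_T\|_{\Hb^{\otimes2}}$, and Theorem~\ref{qq1} together with the same contraction estimate yields a $1/\sqrt{T}$ bound for the normalized $I_2(g_T)$. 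Finally, since $\tilde\theta_T=\phi\big(\tfrac1T\int_0^T X_t^2\,dt\big)$ with $\phi(x)=(x/(H\Gamma(2H)))^{-\frac{1}{2H}}$ smooth and strictly monotone near the ergodic limit $c_\infty$, with $\phi(c_\infty)=\theta$ and $\phi'(c_\infty)\neq0$, I would transfer the bound by a delta-method argument, controlling the Taylor remainder and the deterministic mean bias $\big|\tfrac1T\int_0^T\mathbb{E}[X_t^2]\,dt-c_\infty\big|$ in $L^p$ so as to keep the rate at $1/\sqrt{T}$.

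\textbf{Main obstacle.} With Theorem~\ref{qq1} in hand, the remaining difficulty is to propagate the chaos-level rate $1/\sqrt{T}$ through the two nonlinearities without degradation: the ratio for $\hat\theta_T$ and the power map for $\tilde\theta_T$. This requires $L^p$ (not merely in-probability) control of the second-chaos fluctuations of $\tfrac1T\int_0^T X_t^2\,dt$ and of the deterministic biases, and a careful check that, in the regime $H\in(\tfrac14,\tfrac12)$, the contraction term $\|f_T\otimes_1 f_T\|_{\Hb^{\otimes2}}=O(\sqrt{T})$ is genuinely the bottleneck, so that the bound neither improves past nor falls short of $1/\sqrt{T}$.
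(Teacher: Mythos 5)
Your proposal is correct and is essentially the paper's own route: the paper proves the theorem by quoting the reduction inequalities \eqref{BE jie guji 1} and \eqref{BE jie guji 2} from the proofs of Theorem 1.1 in \cite{chenkuangandli2019} and \cite{Chen guli 22} --- whose content is exactly the Kim--Park second-chaos bound from \cite{Kim3}, the contraction estimate $\|f_T\otimes_1 f_T\|_{\Hb^{\otimes2}}=O(\sqrt{T})$, and the ratio/delta-method transfers you sketch --- and then inserts Theorem \ref{qq1} and Proposition \ref{jiaochaxiang} to get the $1/\sqrt{T}$ rate. The only difference is bookkeeping: you re-derive those reductions in outline (your ``lower-order boundary term'' claim for the moment estimator is precisely Proposition \ref{jiaochaxiang}), whereas the paper outsources them to citations.
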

	\begin{remark}
	\begin{enumerate}


We point out that in the \textnormal{Berry--Ess\'{e}en} class inequality estimates of two statistics (see \eqref{BE jie guji 1} and \eqref{BE jie guji 2}), part of the source of the upper bound $\frac{1}{\sqrt{T}}$ is based on the key inequality (3.17) in \cite{HuNualartandZhou2019}, that is, the upper bound of the function $f_T(s,t)$ about its own compressed $f_T\otimes_1f_T$norm in space $\Hb^{\otimes 2}$ is the key fact of $\sqrt{T}$. The upper bound estimation is obtained by another formula for calculating the inner product in $\Hb$, namely \textnormal{Fourier} transform, as shown in formula \eqref{flybh njff}. In a word, we finally get the \textnormal{Berry--Ess\'{e}en} class upper bound estimates of the two statistics in this paper using four very different formulas for calculating the inner product of $\Hb$: \eqref{inner product_2}, \eqref{neijibiaochu 0}, \eqref{inner product_1} and \eqref{flybh njff}. In other words, except for the formula \eqref{suanzi bianhuanfa} for calculating the inner product using the operator $K^*_H$, all the other four formulas for calculating the inner product of H mentioned in Section \ref{zhunb 1} have been used.
\end{enumerate} 
	\end{remark}
In this paper, the method of proving \textnormal{Berry--Ess\'{e}en} inequality \eqref{juguji de B_E } of moment estimation is based on the following proposition, which gives the estimation of the inner product of binary functions $f_T,\, h_T$ in tensor space ${\Hb}^{\otimes 2}$. Here, binary function	
\begin{align}
  h_T(t,s)&= e^{- {\theta(T-t)-\theta(T-s)}}\mathbb{1}_{\set{0\le s,t\le T}}.\label{ht ts} 
\end{align}
\begin{proposition}\label{jiaochaxiang}
 Let the binary functions $f_T,\, h_T$ be given in \eqref{ftst defn} and \eqref{ht ts} respectively, then there is a constant $C_H$ independent of $T$, which makes the following inequality hold:
    \begin{align}
        \abs{\innp{f_T,\, h_T}_{{\Hb}^{\otimes 2}}}\le C_H. \label{touyige budsh}
    \end{align}
\end{proposition}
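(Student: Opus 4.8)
The plan is to reduce the fourfold $\Hb^{\otimes2}$ inner product to a single one–dimensional computation by exploiting that $h_T$ is a pure tensor, and then to localize the resulting integral near the corner $(T,T)$, where it becomes $T$–independent after a translation. First I would record that $h_T=g_T\otimes g_T$, where $g_T(u)=e^{-\theta(T-u)}\mathbb{1}_{[0,T]}(u)$ is a bounded–variation function whose Lebesgue–Stieltjes measure has an absolutely continuous part $\theta e^{-\theta(T-u)}\,\dif u$ on $(0,T)$ together with two boundary atoms, all of whose mass is exponentially concentrated near $u=T$. Because $h_T$ is a pure tensor and $f_T$ is symmetric, the inner product factorizes through a partial contraction,
\[
\langle f_T,\,h_T\rangle_{\Hb^{\otimes2}}=\langle f_T\otimes_1 g_T,\;g_T\rangle_{\Hb},
\qquad \Psi_T:=f_T\otimes_1 g_T\in\Hb,
\]
which replaces the fourfold integral by two successive one–dimensional inner products, each accessible through the inner–product formula \eqref{inner product_1} for bounded–variation functions established in this paper.

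Next I would evaluate $\Psi_T(t)=\langle f_T(t,\cdot),\,g_T\rangle_{\Hb}$ using the Lebesgue–Stieltjes decomposition of the map $s\mapsto f_T(t,s)=e^{-\theta|t-s|}\mathbb{1}_{[0,T]}(s)$. Splitting this function at the diagonal $s=t$ into the two exponential branches $e^{-\theta(t-s)}$ and $e^{-\theta(s-t)}$ produces an absolutely continuous part plus a corner term at $s=t$; pairing against the measure of $g_T$ and the covariance $R_H$ of \eqref{r_h} gives an explicit expression for $\Psi_T(t)$. The decisive qualitative feature is that, since the mass of $g_T$ sits near $s=T$, the weight $e^{-\theta|t-s|}$ forces $\Psi_T(t)$ to carry a factor of size $e^{-\theta(T-t)}$ whenever $t$ is bounded away from $T$; hence $\Psi_T$ is itself exponentially concentrated near $t=T$.

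Finally I would compute $\langle\Psi_T,\,g_T\rangle_{\Hb}$ by the same formula and perform the change of variables $t\mapsto T-t$, $s\mapsto T-s$ to move the common concentration point to the origin. Under this substitution the exponential weights become $T$–independent, so the only surviving $T$–dependence sits inside $R_H(T-\cdot,\,T-\cdot)$ and in the integration domain; using \eqref{r_h} together with the mean–value bound for the increments of $t\mapsto t^{2H}$, I would dominate the integrand by a fixed integrable function with exponential decay in the new variables and thereby obtain the uniform bound $C_H$. Heuristically this reflects that $\langle f_T,h_T\rangle_{\Hb^{\otimes2}}$ is comparable to $\|g_T\|_{\Hb}^2$, which converges to a finite limit as $T\to\infty$.

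The main obstacle is the interaction of two singular mechanisms special to $H<\tfrac12$: the non–tensor diagonal structure of $f_T$, which contributes atomic and singular pieces to the measure of $f_T(t,\cdot)$, and the singular mixed second derivative of $R_H$, which scales like $|u-v|^{2H-2}$ and is not locally integrable. One must verify that, after the Lebesgue–Stieltjes integration by parts, the boundary and atomic terms generated near the diagonal $s=t$ and near the corner $(T,T)$ do not accumulate into a term growing with $T$ but combine into an $O(1)$ contribution. This is exactly the point where the naive covariance–density approach (valid only for $H>\tfrac12$) breaks down and the Lebesgue–Stieltjes formula of this paper is indispensable; controlling the bookkeeping of these boundary terms is the crux of the argument.
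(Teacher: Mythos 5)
Your overall skeleton is the same as the paper's: you exploit $h_T=g_T\otimes g_T$, reduce the fourfold inner product to two iterated one--dimensional $\Hb$--inner products (your partial contraction $\Psi_T=f_T\otimes_1 g_T$ is exactly the paper's $\innp{f_T(t,\cdot),\phi_T}_{\Hb}$ in \eqref{zhagnliang fenjie}), split $f_T(t,\cdot)$ at the diagonal, and invoke the bounded--variation inner--product formulas. However, there is a genuine gap in the quantitative mechanism you rely on: the claim that $\Psi_T(t)$ ``carries a factor of size $e^{-\theta(T-t)}$'' and is exponentially concentrated near $t=T$ is false for $H<\frac12$. The $\Hb$--inner product is not local: by the disjoint--support formula (Corollary \ref{coro-gu-3}), pairing a bump located near $s=t$ against mass located near $s=T$ produces
\begin{equation*}
\alpha_H\int f(s)\,\dif s\int g(u)\,(u-s)^{2H-2}\,\dif u \;\asymp\; (T-t)^{2H-2},
\end{equation*}
i.e.\ only \emph{polynomial} decay in $T-t$, because the kernel $|u-s|^{2H-2}$ couples distant points with polynomial weight. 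The paper's own estimate \eqref{chudengbudengshi2} displays exactly these terms: besides the exponential pieces $e^{-T}t^{2H-1}$ and $e^{t-T}$, it contains $(T-t)^{2H-1}\mathbb{1}_{(T-1,T]}(t)$ and $(T-t)^{2H-2}\mathbb{1}_{[0,T-1]}(t)$, which do not decay exponentially. Consequently your final step --- ``dominate the integrand by a fixed integrable function with \emph{exponential} decay in the new variables'' --- cannot be carried out as stated.

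The repair is precisely the bookkeeping the paper performs in \eqref{zuihou 1}--\eqref{ziuhou 2}, and it is where the hypothesis $H<\frac12$ enters in an essential (not merely technical) way: since $2H-2<-1$, one has $\int_0^{T-1}(T-t)^{2H-2}\,\dif t\le\int_1^{\infty}x^{2H-2}\,\dif x<\infty$ uniformly in $T$, and since $2H-1>-1$, the boundary singularity satisfies $\int_{T-1}^{T}(T-t)^{2H-1}\,\dif t=\frac{1}{2H}<\infty$. So the uniform bound $C_H$ comes from integrability of polynomial tails on either side of the exponent $-1$, not from exponential localization; your heuristic ``$\innp{f_T,h_T}_{\Hb^{\otimes2}}\approx\|g_T\|^2_{\Hb}$'' is only recovered after this correction. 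As written, your argument would break at the domination step, and fixing it forces you back to the paper's explicit splitting into the four terms $\innp{f^i,h^j}_{\Hb}$ and the polynomial-tail estimates above.
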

\begin{remark}
Proposition \ref{jiaochaxiang} and Theorem \ref{qq1} have the same point in that they are both the inner product of two bivariate functions estimated in $\Hb^{\otimes 2}$. The difference is that the former actually divides the integral region into nine blocks, and finally reduces it to three kinds of integral calculations by symmetry and other methods. The latter takes advantage of the particularity of the function $h_T(s,t)$ to separate variables, so it regresses to the problem of estimating the inner product of two univariate functions in $\Hb$, and conveniently uses the inner product calculation formula in Inference \ref{coro-gu-3}. Compared with the two methods, the whole process of the former is very complicated and the latter is very simple. However, since the function $f_T(s,t)$ is not variable separated, the latter method is not applicable to the former. As far as we know, we do not know whether there are other simpler methods to prove the conclusion of Theorem \ref{qq1}.
\end{remark}

    In the second half of this section, we give a new formula for calculating the inner product of $H\in (0,\frac12)$ space-time $\Hb$ and symmetric tensor space $\Hb^{\odot 2}$. See Propositions \ref{key1} and \ref{key2}. The new formula is similar to but also obviously different from the following well-known facts to some extent: When Hurst parameter~$H\in(0,\frac12)$, the formula for the inner product of two disjoint functions $f$ and $g$ in~Hilbert space $\Hb$ connected by fractional Brownian motion is the same as that for the inner product when $H\in(\frac12,1)$, see \cite{Cheridito 03, Mishura 08}, or see Corollary ~\ref{coro-gu-3}. The new formula for calculating the inner product given in Proposition \ref{key1} of this paper can be explained as follows:  
The integral region $[0,T]^2$ is divided into the following three parts. For the double integral on region
\begin{equation}\label{kp 1}
    \kappa_1:=\set{(u,v)\in[0,T]^2:\,0\le v\le u-1\le T-1  }
\end{equation} and \begin{equation}\label{kp 2}
  \kappa_2:=\set{(u,v)\in[0,T]^2:\, 0\le u\le v-1\le T-1 } 
\end{equation} the Partial integral formula on the measure is applied twice, while for the double integral on region \begin{equation}\label{kp 3}
    \kappa_3:=\set{(u,v)\in[0,T]^2:\, 0\vee (u-1)\le v\le (u+1)\wedge T}
\end{equation} the Partial integral formula on the measure is applied only once. The integral domain decomposition is shown in Figure 1.
\begin{figure}
	\centering
	\vspace{-3.618mm}
		\setlength{\abovecaptionskip}{-0.618mm}
	\begin{tikzpicture}
		\draw (-2,2) -- (2,2);
		\draw (2,2) -- (2,-2);
		\draw[->] (-2.5,-2) -- (2.5,-2);
		\draw[->] (-2,-2.2) -- (-2,2.5);
		\node[left] at (-2,2) {$T$};
		\node[below] at (2,-2) {$T$};
		\node[above right] at (-2,2.2) {$v$};
		\node[above right] at (2.2,-2) {$u$};
		\draw (-1.5,-2) -- (2,1.5);
		\draw (-2,-1.5) -- (1.5,2);
		\node[centered] at (-0.798,0.906) {$\kappa_2$};
		\node[centered] at (0,0) {$\kappa_3$};
		\node[centered] at (0.906,-0.798) {$\kappa_1$};
	\end{tikzpicture}
	\caption{$\Hb$ schematic diagram of integral domain decomposition 
	in new calculation formula of inner product}\label{tu'}
\end{figure}
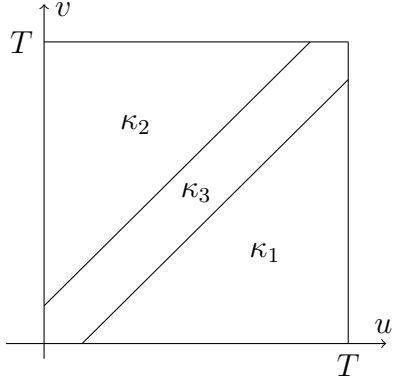

\begin{notation}\label{jihao 001}
Record $\alpha_H=H(2H-1)$. Let $\V_{[0,T]}$ be the whole set of bounded variation functions defined on~$[0,T]$. For any $f\in \V_{[0,T]}$, $f^0$ is defined as
		\begin{align}
			f^0(x) = \left\{
			\begin{array}{rl}
				f(x) , &\text{} \,x\in[0,T],\\
				0   ,  & \text{other} .
			\end{array} \right.\notag
		\end{align}
Record $\nu_f$ is the limit of \textnormal{Lebesgue-Stieljes} measure on $\big(\Rnum,\mathcal{B}(\Rnum)\big)$ of ~$f^0(x)$ connection on $\big([0,T],\mathcal{B}([0,T]\big)$. In particular, the following more special form is used in this paper, that is, let $0\le a<b\le T$, and $g=f\cdot \1_{[a,b]}$, where $f$ is a differentiable function, then:
\begin{equation}\label{jieshi01}
  \nu_g(\dif x)=  f'(x)\cdot \1_{[a,b]}(x)\dif x+  f(x)\cdot \big(\delta_a(x)-\delta_b(x)\big)\dif x,
\end{equation}
Here $\delta_a(\cdot)$ is a dirac generalized function whose mass is concentrated at point $a$. For ease of use, we use the notation partial $\frac{\partial g}{\partial x}$ to represent the ``density function'' in the form of measure \eqref{jieshi01}.
\end{notation}
\begin{remark} The details of the above measures can be found in~\cite{Jolis}, which is the source of the new inner product expression formula in this paper, and is also one of the starting points of this paper. The purpose of introducing this measure is to use the Partial integral formula about this measure. In other words, its convenience is to absorb the values of endpoints $a,\,b$ into the measure through two Dirac generalized functions (or Dirac single point measure) $\nu_g$, so that it is convenient to use the Partial integral formula of $\nu_g$. See Lemma \ref{partial integral} for details.
\end{remark}
	\begin{proposition}\label{key1}
		If~$f,g\in\V_{[0,T]}$, then
		\begin{small}
		\begin{align}
			\langle f,g\rangle_{\Hb}=&\alpha_H\Big(\int^T_{1} g(t) \dif t\int^{t-1}_0 f(s) (t-s)^{2H-2}\dif s +\int^T_{1} f(s) \dif s\int^{s-1}_0 g(t) (s-t)^{2H-2}\dif t\notag\Big)\notag\\
			&-H\int^T_0g(t)\dif t\int_{0}^{T}\big(t^{2H-1}-\mathrm{sgn}(t-s)|t-s|^{2H-1}\big)\nu_{\tilde{f}_t}(\mathrm{d}s)\label{inner product_2},
		\end{align}
		\end{small}
		including $\tilde{f_t}(s)=f(s)\cdot\1_{[(t-1)\vee0,(t+1)\wedge T]}(s)$) is a family of functions with~$s$ as the independent variable and~$t$ as the parameter. The meaning of $\nu_{\tilde{f}_t}(\mathrm{d}s)$) is given in notation \ref{jihao 001} and \eqref{jieshi01}. In addition, take any two positive numbers~$\varepsilon_1,\varepsilon_2\in(0,T)$, record
		$$\bar{f}_t(s)=f(s)\cdot\1_{[(t-\varepsilon_1)\vee0,(t+\varepsilon_2)\wedge T]}(s),$$
	then \eqref{inner product_2} can be generalized as: 
	\begin{small}
		\begin{align}
			\langle f,g\rangle_\Hb = &\alpha_H\Big(\int^T_{\varepsilon_1} g(t) \dif t\int^{t-\varepsilon_1}_0 f(s) (t-s)^{2H-2}\dif s +\int^T_{\varepsilon_2} f(s) \dif s\int^{s-\varepsilon_2}_0 g(t) (s-t)^{2H-2}\dif t\notag\Big)\notag\\
			&-H\int^T_0g(t)\dif t\int_{0}^{T}\big(t^{2H-1}-\mathrm{sgn}(t-s)|t-s|^{2H-1}\big)\nu_{\bar{f}_t}(\mathrm{d}s)\label{inner product_4}.
		\end{align}
	\end{small}
	\end{proposition}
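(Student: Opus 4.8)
The plan is to reduce the abstract inner product to a concrete double integral against the Lebesgue--Stieltjes measures $\nu_f,\nu_g$, and then to integrate by parts a controlled number of times, organized by the region decomposition $[0,T]^2=\kappa_1\cup\kappa_2\cup\kappa_3$ of \eqref{kp 1}--\eqref{kp 3}. The starting point is the Jolis-type representation \eqref{inner product_1} (quoted from \cite{Jolis}), which for $f,g\in\V_{[0,T]}$ writes the inner product as
\begin{equation*}
\innp{f,g}_{\Hb}=\int_{[0,T]}\int_{[0,T]}R_H(t,s)\,\nu_f(\dif s)\,\nu_g(\dif t),
\end{equation*}
with $R_H$ the covariance \eqref{r_h}. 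The only analytic tool needed is the integration-by-parts formula for such measures, Lemma \ref{partial integral}, and the key organizing device is Notation \ref{jihao 001}: absorbing endpoint values into Dirac masses via \eqref{jieshi01}, so that a window boundary is carried by the measure itself rather than appearing as a separate boundary term.

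The decisive structural fact, and the reason $H\in(0,\frac12)$ must be handled with care, is the following dichotomy for the derivatives of the kernel. A direct computation gives
\begin{equation*}
\partial_t R_H(t,s)=H\big(t^{2H-1}-\sgn(t-s)|t-s|^{2H-1}\big),\qquad \partial_s\partial_t R_H(t,s)=\alpha_H|t-s|^{2H-2}.
\end{equation*}
Since $2H-1>-1$, the first derivative is integrable across the diagonal $\{t=s\}$, whereas the mixed second derivative, behaving like $|t-s|^{2H-2}$ with $2H-2\le-1$, is \emph{not}. Hence one integration by parts may be carried out globally, but a second one is legitimate only at positive distance from the diagonal.

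I would therefore first integrate by parts once in the $t$ variable over all of $[0,T]$, using Lemma \ref{partial integral} to move $\nu_g(\dif t)$ onto the kernel; by the integrability of $\partial_t R_H$ this is unconditionally valid and yields
\begin{equation*}
\innp{f,g}_{\Hb}=-H\int_0^T g(t)\,\dif t\int_0^T\big(t^{2H-1}-\sgn(t-s)|t-s|^{2H-1}\big)\,\nu_f(\dif s),
\end{equation*}
which already produces the Lebesgue factor $g(t)\,\dif t$ of the last line of \eqref{inner product_2}. Next I would split the inner $\nu_f$-integral along the window $|s-t|<1$, i.e. according to $\kappa_3$ versus $\kappa_1\cup\kappa_2$. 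On the off-diagonal part $\kappa_1\cup\kappa_2$, where $|s-t|\ge1$ stays away from the singularity, a second integration by parts in $s$ is permissible: it converts $\nu_f(\dif s)$ into $f(s)\,\dif s$ and the kernel into $\partial_s\partial_t R_H=\alpha_H|t-s|^{2H-2}$, delivering (with the correct overall sign from the two integrations by parts) exactly the two $\alpha_H$ double integrals in the first line of \eqref{inner product_2}, the two pieces being mirror images under $t\leftrightarrow s$, $f\leftrightarrow g$. On the diagonal strip $\kappa_3$ no second integration by parts is attempted, and the corresponding term is retained as a measure integral over the window $[(t-1)\vee0,(t+1)\wedge T]$.

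The main obstacle, and precisely where Notation \ref{jihao 001} earns its keep, is the matching of the boundary terms. The second integration by parts on $\kappa_1\cup\kappa_2$ generates boundary contributions at the window edges $s=t\pm1$; I expect these to be exactly the endpoint Dirac-mass terms prescribed by \eqref{jieshi01}, so that adding them to the retained $\kappa_3$ integral replaces $\nu_f$ restricted to the window by the localized measure $\nu_{\tilde f_t}$ of $\tilde f_t=f\cdot\1_{[(t-1)\vee0,(t+1)\wedge T]}$. Carefully verifying that the $s=t\pm1$ boundary terms assemble precisely into the window-endpoint Diracs of $\nu_{\tilde f_t}$, while the outer boundary terms at $s=0,T$ are correctly accounted for and nothing is left over, is the delicate part of the computation; once it is carried out, collecting the three contributions gives \eqref{inner product_2}. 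Finally, since the width $1$ of the strip plays no role beyond keeping $|s-t|$ bounded away from $0$, replacing it by arbitrary $\varepsilon_1,\varepsilon_2\in(0,T)$ and repeating the argument verbatim yields the generalization \eqref{inner product_4}.
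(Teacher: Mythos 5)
Your proposal is correct and its skeleton is exactly the paper's: the Jolis representation, one integration by parts in $t$ over all of $[0,T]$ (valid because $\partial_t R_H$ is integrable across the diagonal), a split of the $s$-integration at distance $1$ from $t$, a second integration by parts only on the off-diagonal pieces, and retention of the strip term as a measure integral. The one genuine difference lies in the step you yourself flag as delicate: you restrict the \emph{measure} $\nu_f$ to the three regions, so your off-diagonal integrations by parts create explicit boundary terms at $s=t\pm1$ which must then be shown to reassemble into the endpoint Diracs of $\nu_{\tilde f_t}$. That matching does hold: for differentiable $f$, the integration by parts over $[0,t-1)$ leaves the extra term $+f(t-1)\frac{\partial R_H}{\partial t}(t-1,t)$ and over $(t+1,T]$ the term $-f(t+1)\frac{\partial R_H}{\partial t}(t+1,t)$, which are precisely the masses $f(t-1)\delta_{t-1}$ and $-f(t+1)\delta_{t+1}$ prescribed by \eqref{jieshi01}, so your argument closes — but as written this is asserted rather than verified. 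The paper sidesteps the issue entirely by decomposing the \emph{function} instead of the region: for fixed $t$ it writes $f=f_t^1+\tilde f_t+f_t^2$, obtains $\nu_f=\nu_{f_t^1}+\nu_{\tilde f_t}+\nu_{f_t^2}$ from Lemma \ref{measure} (the interior Dirac masses of the truncations cancel automatically in the sum), and applies Lemma \ref{partial integral} to $\nu_{f_t^1}$ and $\nu_{f_t^2}$; since each truncated measure already carries its own endpoint Diracs, these integrations by parts produce no boundary terms at all, and the middle summand is $\nu_{\tilde f_t}$ by construction. The two routes are computationally identical, but the paper's ordering — truncate the function before integrating by parts — turns your unfinished bookkeeping into a one-line application of Lemma \ref{measure}; to complete your write-up, either carry out the short verification sketched above or adopt that ordering.
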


\begin{remark}
The significant difference between the inner product calculation formula (Proposition \ref{key1}) obtained by the above division of the integral region and the inner product calculation formula when the supports do not intersect is that the latter requires that the support set of the binary function is $\set{(u,v):\,0\le v\le u \le T }$ or $\set{(u,v):\,0\le v\le u \le T }$ a rectangular sub region. See Corollary~\ref{coro-gu-3}. This significant difference is reflected in Figure 1 to some extent.
\end{remark}
	 Note that $\Hb^{\otimes2}$ and $\Hb^{\odot2}$ are quadratic tensor product spaces and quadratic symmetric tensor product spaces of~$\Hb$ . Proposition~\ref{key2} will give the calculation formula of the inner product of binary symmetric functions in ~$\Hb^{\odot2}$, which is the direct inference of Proposition \ref{key1}, so the derivation details are omitted below. For the convenience of expression, the following marks are introduced:
	\begin{notation}\label{jiaohao suanzi}
		Let $\Lh(C_0,\R)$ be the whole set of bounded linear functionals defined on the set of compact supported continuous functions $C_0$. Let $a,b\in [0,T]$, define three linear operators as follows:
		\begin{enumerate}
	\item  Operators of $\V_{[0,T]}\rightarrow \Lh(C_0,\R)$:
	\begin{equation*}
	 \dfrac{\partial_a}{{\partial} s}f(s) =  \dfrac{\partial}{{\partial} s}\big(f{(s)}\cdot\1_{[(a-1)\vee0,(a+1)\wedge T]}(s)\big),
    \end{equation*} I.e.: $\dfrac{\partial_a}{{\partial} s}f(s) $ is the density function of measure $\nu_{\tilde{f}_a}$ corresponding to function $\tilde{f_a}(s)$ in Proposition \ref{key1}, see also \eqref{jieshi01}.
		\item  Operators of $\V_{[0,T]}^{\otimes 2}\rightarrow \Lh(C_0,\R)\otimes \V_{[0,T]}$: \begin{align*}
			\dfrac{\partial_a}{{\partial} s}\varphi{(s,t)}=\dfrac{\partial}{{\partial} s}\big(\varphi{(s,t)}\cdot\1_{[(a-1)\vee0,(a+1)\wedge T]}(s)\big).
		\end{align*}
	\item Operators of $\V^{\otimes2}_{[0,T]}\rightarrow\Lh(C_0,\R)^{\otimes2}$: 
		\begin{align*}
			\dfrac{\partial_a\partial_b}{{\partial} s\partial t}\varphi{(s,t)}=\dfrac{\partial^2}{{\partial} s\partial t}\big(\varphi{(s,t)}\cdot\1_{[(a-1)\vee0,(a+1)\wedge T]}(s)\cdot\1_{[(b-1)\vee0,(b+1)\wedge T]}(t)\big),
		\end{align*} i.e. $\dfrac{\partial_a\partial_b}{{\partial} s\partial t}\varphi{(s,t)}$ is the \textnormal{Lebesgue-Stieljes} measure density function, which connected by the binary function $\varphi{(s,t)}\cdot\1_{[(a-1)\vee0,(a+1)\wedge T]}(s)\cdot\1_{[(b-1)\vee0,(b+1)\wedge T]}(t) $ on the space \begin{small} $$\Big( [(a-1)\vee0,(a+1)\wedge T]\times[(b-1)\vee0,(b+1)\wedge T],\,\mathcal{B}\big( [(a-1)\vee0,(a+1)\wedge T]\times[(b-1)\vee0,(b+1)\wedge T]\big)\Big)$$\end{small}
\end{enumerate}
	\end{notation}

	\begin{proposition}\label{key2}
		Let $\vec{s}=(s_1,s_2)$, $\vec{t}=(t_1,t_2)$ and~$(\vec{s},\vec{t} )\in \kappa_i\times \kappa_j,\, i,j=1,\, 2,\,3$, $\kappa_i$ see \eqref{kp 1}-\eqref{kp 3}. if~$\phi,\psi\in\V_{[0,T]}^{\odot2}$, then
		\begin{align}
			\begin{split}
			\langle\psi,\phi\rangle_{\Hb^{\otimes2}}=&\alpha_H^2\sum_{i,j=1}^2\int_{{\kappa}_i\times {\kappa}_j}\psi(s_1,t_1)\phi(s_2,t_2)|s_1-s_2|^{2H-2}|t_1-t_2|^{2H-2}\dif \vec{s}\dif \vec{t}\\
				&-2\alpha_H\sum_{i=1}^2\int_{{\kappa}_3\times {\kappa}_i}\psi(s_1,t_1)\frac{\partial_{s_1}}{\partial s_2}\phi(s_2,t_2)\frac{\partial R_H}{\partial s_1}(s_1,s_2)|t_1-t_2|^{2H-2}\dif \vec{s}\dif \vec{t}\\
				&+\int_{{\kappa}_3\times {\kappa}_3}\psi(s_1,t_1)\frac{\partial R_H}{\partial s_1}(s_1,s_2)\frac{\partial R_H}{\partial t_1}(t_1,t_2)\frac{\partial_{s_1}\partial_{t_1}}{\partial s_2\partial t_2}\phi(s_2,t_2)\dif \vec{s}\dif \vec{t},   \label{key2'}
			\end{split}
		\end{align}
 where $R_H(t_1,t_2) $ is the covariance of fractional Brownian motion(see \eqref{r_h}), 
Operators $\dfrac{\partial_a}{{\partial} s},\, \dfrac{\partial_a\partial_b}{{\partial} s\partial t}$ see mark \ref{jiaohao suanzi}.
	\end{proposition}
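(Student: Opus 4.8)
The plan is to derive Proposition~\ref{key2} as a direct tensorization of Proposition~\ref{key1}. Since $\Hb^{\otimes 2}$ is the tensor product of $\Hb$ with itself, the inner product $\langle \psi,\phi\rangle_{\Hb^{\otimes 2}}$ factors through the one-dimensional formula applied separately in the two coordinate directions. First I would hold the $t$-variables fixed and treat $\psi(\cdot, t_1)$ and $\phi(\cdot, t_2)$ as functions of a single variable in $\V_{[0,T]}$, applying formula \eqref{inner product_2} in the $s$-direction. The key structural input is that \eqref{inner product_2} represents $\langle f,g\rangle_\Hb$ as a sum of three terms whose integration regions correspond exactly to the decomposition $[0,T]^2 = \kappa_1 \cup \kappa_2 \cup \kappa_3$ from \eqref{kp 1}--\eqref{kp 3}: the two $\alpha_H$-weighted double integrals with kernel $(t-s)^{2H-2}$ account for the off-diagonal blocks $\kappa_1,\kappa_2$ where the partial-integration formula is applied twice, while the $\partial R_H/\partial s$ term with the operator $\partial_a/\partial s$ of Notation~\ref{jiaohao suanzi} accounts for the near-diagonal block $\kappa_3$ where it is applied only once.

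After applying the formula in the $s$-direction, each of the resulting terms still depends on $t_1, t_2$, and I would apply Proposition~\ref{key1} a second time in the $t$-direction. Expanding the product of the two one-dimensional three-term expressions yields $3\times 3 = 9$ combinations indexed by $(i,j)$ with $i,j \in \{1,2,3\}$, matching the block structure $\kappa_i \times \kappa_j$. The three distinct types of resulting integrals in \eqref{key2'} arise by grouping: the $\sum_{i,j=1}^2$ term collects the blocks where the double partial-integration was used in both directions (producing the product kernel $|s_1-s_2|^{2H-2}|t_1-t_2|^{2H-2}$ with prefactor $\alpha_H^2$); the $\sum_{i=1}^2$ cross term with prefactor $-2\alpha_H$ collects blocks where integration by parts was applied twice in one direction and once in the other (yielding one $\partial R_H$ factor, one power-kernel factor, and the operator $\partial_{s_1}/\partial s_2$ acting on $\phi$); and the final $\kappa_3\times\kappa_3$ term collects the block where it was applied once in each direction (producing two $\partial R_H$ factors and the mixed operator $\partial_{s_1}\partial_{t_1}/\partial s_2\partial t_2$ from item (3) of Notation~\ref{jiaohao suanzi}). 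The factor $2$ in the cross term reflects the symmetry between the two ways of assigning the ``twice'' and ``once'' roles to the $s$- and $t$-directions, which collapse by the symmetry $\psi,\phi \in \V_{[0,T]}^{\odot 2}$.

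The main obstacle I expect is bookkeeping the operator notation correctly so that the Dirac-mass boundary contributions from \eqref{jieshi01} are transcribed consistently across both directions. When \eqref{inner product_2} is applied in the $s$-direction, the role of the differentiated argument (here $\phi$, via $\partial_{s_1}/\partial s_2$) and the undifferentiated argument (here $\psi$) must be assigned unambiguously, and this assignment must be respected when tensoring in the $t$-direction so that the operators $\partial_{s_1}\partial_{t_1}/\partial s_2\partial t_2$ act on the correct factor $\phi(s_2,t_2)$ while $\psi(s_1,t_1)$ is paired against the covariance kernels. Care is also needed to confirm that the $\kappa_3$ terms genuinely reduce to the $\partial R_H/\partial s$ form rather than the raw expression $t^{2H-1} - \mathrm{sgn}(t-s)|t-s|^{2H-1}$ appearing in \eqref{inner product_2}; this identification uses $\partial R_H/\partial s_1(s_1,s_2) = H\big(s_1^{2H-1} - \mathrm{sgn}(s_1-s_2)|s_1-s_2|^{2H-1}\big)$, which absorbs the factor $H$ and rewrites the near-diagonal contribution compactly. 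Since these are all direct consequences of Proposition~\ref{key1}, the derivation is routine once the notation is fixed, which is why the statement flags that the details are omitted.
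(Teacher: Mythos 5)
Your proposal is correct and follows exactly the route the paper intends: the paper omits the proof of Proposition~\ref{key2} precisely because it is a ``direct inference'' of Proposition~\ref{key1}, and your tensorization argument---applying \eqref{inner product_2} in the $s$-direction with $t$-variables fixed, then again in the $t$-direction, grouping the nine blocks $\kappa_i\times\kappa_j$, and using the symmetry $\psi,\phi\in\V_{[0,T]}^{\odot 2}$ to collapse the two cross-term families into the factor $2$ (the collapse that Remark~\ref{difference} notes would fail for asymmetric functions)---is exactly that derivation. Your sign bookkeeping (prefactors $\alpha_H^2$, $-2\alpha_H$, $+1$ from the products of the $+\alpha_H$ and $-H$ terms of \eqref{inner product_2}) and the identification $\frac{\partial R_H}{\partial s_1}(s_1,s_2)=H\big(s_1^{2H-1}-\mathrm{sgn}(s_1-s_2)|s_1-s_2|^{2H-1}\big)$ absorbing the factor $H$ are both handled correctly.
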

	\begin{remark}\label{difference}
 \begin{enumerate}
 	\item If~$\psi,\phi$ is asymmetric, then
		\begin{align}
			&\sum_{i=1}^2\int_{\kappa_3\times \kappa_i}\psi(s_1,t_1)\frac{\partial_{s_1}}{\partial s_2}\phi(s_2,t_2)\frac{\partial R_H}{\partial s_1}(s_1,s_2)|t_1-t_2|^{2H-2}\dif \vec{s}\dif \vec{t}\notag\\
			&\neq\sum_{i=1}^2\int_{\kappa_i\times \kappa_3}\psi(s_1,t_1)\frac{\partial_{t_1}}{\partial t_2}\phi(s_2,t_2)\frac{\partial R_H}{\partial t_1}(t_1,t_2)|s_1-s_2|^{2H-2}\dif \vec{s}\dif \vec{t}\notag.
		\end{align}
    \item The essence of inner product formula \eqref{key2'} is also measure decomposition, that is, for any given $(s_1,t_1)\in [0,T]^2$, $\nu_{\phi}$, the measure $\nu_{\phi}$ on $[0,T]^2$ associated with the binary function $\phi(s_2,t_2)\in \V_{[0,T]}^{\odot2}$ is decomposed into the sum of the measures derived from the limitation of $\phi(s_2,t_2)$ itself on $\big(\overline{M_{ij}},\mathcal{B}(\overline{M_{ij}})\big),\,\,i,j=1,2,3,$ (see Figure 2 for details), where
	\begin{align*}
	    M_{11}&=\set{(s_2,t_2)\in [0,T]^2,\, s_2\le s_1-1,\,t_2\le t_1-1},\\
	    M_{33}&=\set{(s_2,t_2)\in [0,T]^2,\, s_1-1<s_2\le s_1+1,\,t_1-1<t_2\le t_1+1},
	\end{align*} Others are similar and here $\overline{M_{ij}}$ is the closure of $M_{ij}$.
	\begin{figure}
		\centering
		\vspace{-3.618mm}
		\setlength{\abovecaptionskip}{-0.618mm}
		\begin{tikzpicture}
			\coordinate (A) at (0.598,0.526);
			\fill (A) circle[radius=2pt];
			\node at (2.6,1.226) {$(s_1,t_1)$};
			\node[left] at (-2,2) {$T$};
			\node[below] at (2,-2) {$T$};
			\node[above right] at (-2,2.0) {$t_2$};
			\node[above right] at (2.2,-2) {$s_2$};
			\node[below] at (A) {$M_{33}$};
			\node[below] at (-0.798,0.526) {$M_{13}$};
			\node[below] at (1.6,0.526) {$M_{23}$};
			\node[below] at (0.598,0.526-1.2) {$M_{31}$};
			\node[below] at (-0.798,0.526-1.2) {$M_{11}$};
			\node[below] at (1.6,0.526-1.2) {$M_{21}$};
			\node[below] at (0.598,0.526+1.2) {$M_{32}$};
			\node[below] at (-0.798,0.526+1.2) {$M_{12}$};
			\node[below] at (1.6,0.526+1.2) {$M_{22}$};
			\draw[dotted] (2.05,1.146) -- (A.west);
			\draw (-2,2) -- (2,2);
			\draw (2,2) -- (2,-2);
			\draw[->] (-2.5,-2) -- (2.9,-2);
			\draw[->] (-2,-2.2) -- (-2,2.38);
			\draw (0.598+0.55,-2) -- (0.598+0.55,2);
			\draw (0.598-0.55,-2) -- (0.598-0.55,2);
			\draw (-2,0.526+0.55) -- (2,0.526+0.55);
			\draw (-2,0.526-0.55) -- (2,0.526-0.55);
			\node[below] at (0.598-0.55,-2) {$s_1-1$};
			\node[below] at (0.598+0.55,-2) {$s_1+1$};
			\node[left] at (-2,0.526-0.55) {$t_1-1$};
			\node[left] at (-2,0.526+0.55) {$t_1+1$};
		\end{tikzpicture}\caption{Schematic diagram of measure decomposition}\label{tu}
	\end{figure}
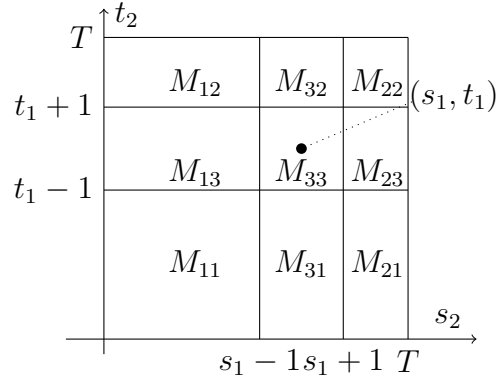
		\end{enumerate}
	\end{remark}
	
    The rest of this paper is arranged as follows: in Section \ref{zhunb 1} we briefly review various known calculation formulas of inner product in $\Hb$ and prove Proposition \ref{key1}. In Section \ref{sec three} we prove Proposition \ref{jiaochaxiang}, Theorem \ref{qq1} and Theorem \ref{zhuyaodl 2}. As an appendix, in Section \ref{jianjingfenxi} we give asymptotes of various multiple integrals used in the proof of Theorem \ref{qq1}. Finally, we point out that the constants $C_{H},\,C_{H,\theta}$ independent to $T$, and can be different from line to line.

    \section {Preparation knowledge and proof of new calculation formula of inner product in $\Hb$}\label{zhunb 1}
	\subsection{Preparation knowledge}
	    Record $\mathcal{E}$ as all the real valued ladder functions on $[0,T]$, and assign the inner product on them:
		\begin{equation}
			\langle\1_{[a,b)},\1_{[c,d)}\rangle_{\Hb}=\mathbb{E}\big((B_b^H-B_a^H)(B_d^H-B_c^H)\big).\label{HB}
		\end{equation}
	$\Hb$ is the \textnormal{Hilbert} space of~$\mathcal{E}$ after completion. On the premise of preserving the linear structure and norm, the mapping ~$\1_{[0,t]}\mapsto B_t^H$ is extended to~$\Hb$, and the isometric isomorphic mapping is recorded as~$\varphi\mapsto B^H(\varphi)$. And called $\{B^H(\varphi),\varphi\in\Hb\}$ is a Gaussian equidistant process connected with \textnormal{Hilbert} space $\Hb$. The expression formula of ~$\Hb$ inner product in ~\textnormal{Hilbert} space is discussed in two cases.
	
	1. When $H>\frac12$, The covariance of $B_t^H$ can be written as
	\begin{align}
		\label{coviance}R_H(t,s)=\alpha_H\int^s_0\dif u\int^t_0|u-v|^{2H-2}\dif v,
	\end{align}
	where~$\alpha_H=H(2H-1)$. But for any $f,\,g\in\Hb$, we have
	\begin{align}\label{neijibiaochu 0}
		\langle f,g\rangle_\Hb=\alpha_H\int^T_0 g(s)\dif s\int^T_0 f(t)|s-t|^{2H-2}\dif t,
	\end{align}
	It should be noted that the elements in $\Hb$ are not necessarily ordinary functions.
	
	2. For any given $s\in[0,T]$, when $H<\frac12$, the defective integral of $|s-t|^{2H-2}$ on ~$[0,T]$ does not converge, so the covariance function of $B_t^H$ cannot be directly expressed in the form of ~\eqref{coviance}. At this time, the elements in space $\Hb$ are ordinary functions, but the formula \eqref{neijibiaochu 0} of inner product in \textnormal{Hilbert} space ~$\Hb$ is generally not true. But what is interesting is that the covariance of $B_t^H$ increment satisfies the formula:
    If let $0 \leq a < b \leq c < d \leq T$, then
	\begin{align}\label{bth zengliang}
		 \mathbb{E}[(B_b^H-B_a^H)(B_d^H-B_c^H)] =\alpha_H\int^b_a\dif u\int^d_c\,|u-v|^{2H-2}\dif v,
	\end{align}
	This leads to the conclusion that if $f,\,g\in\Hb$ supports are disjoint, the inner product formula \eqref{neijibiaochu 0} still holds, see \cite{Cheridito 03, Mishura 08}. By the way, we point out that Corollary ~\ref{coro-gu-3} in this paper can also lead to this known conclusion.
	
	In reference~\cite{Jolis}, a formula is given for limiting the inner product of~\textnormal{Hilbert} space~$\Hb$ to the bounded variation function $\V_{[0,T]}$. 
 If~$f,g\in\V_{[0,T]}$ then
		\begin{equation}\label{inner product_1}
		    	\langle f,g\rangle_\Hb=\int_{[0,T]^2}R_H(s,t)\nu_f(\dif s)\nu_g(\dif t)\\
			=-\int_{[0,T]^2}g(t)\frac{\partial R_H}{\partial t}(s,t)\dif t\,\nu_f(\dif s).
		\end{equation}
 refer to~\cite{HuNualartandZhou2019}~\cite{Jolis}~\cite{chenandli2021}.

In addition, with the help of Fourier transform, the following formula for the inner product of \textnormal{Hilbert} space~$\Hb$ is sometimes very useful:
\begin{align}\label{flybh njff}
\innp{f,\,g}_{\Hb}=\frac{\Gamma(2H+1)\sin(\pi H)}{2\pi}\int_{\Rnum} \mathcal{F}f (\xi)\overline{\mathcal{F}g (\xi)} \abs{\xi}^{1-2H}\,\dif \xi,
\end{align} here $f,g$ can be taken from some proper subspace of $\Hb$, see \cite{PipTaq 00} for details. 

Finally, with the help of kernel function
	\begin{align}
		K_H(t,s)=c_H\Big[\Big(\frac ts\Big)^{H-\frac12}(t-s)^{H-\frac12}-(H-\frac12)s^{\frac12-H}\int^t_su^{H-\frac32}(u-s)^{H-\frac12}\dif u\Big]
	\end{align}
and operator~$K^*_H$
	\begin{align*}
		 (K^*_H\phi)(t)=K_H(T,t)\phi(T)+\int^T_t\frac{\partial K_H}{\partial s}(s,t)[\phi(s)-\phi(t)]\dif s,
	\end{align*}
People transform the inner product of ~\textnormal{Hilbert} space ~$\Hb$ into the inner product of elements in~$L^2([0,T])$.
	\begin{align}
		 \langle\phi,\psi\rangle_{\Hb}=\langle K^*_H\phi(t),K^*_H\psi(t)\rangle_{L^2([0,T])}. \label{suanzi bianhuanfa}
	\end{align}
This inner product formula establishes the theoretical relationship between $\Hb$ and $L^2([0,T])$, but people usually do not directly use it to calculate the inner product. For details, see~\cite{David Nualart}.

    \subsection{A new formula for calculating inner product}
Let $f$, $g$ be monotone non decreasing functions on $\Rnum$, then the~\textnormal{Lebesgue-Stieljes} measure associated with bounded variation functions~$(f-g)$ on $\Rnum$ is defined as
\begin{align*}\bar{\nu}_{(f-g)}=\bar{\nu}_{f}-\bar{\nu}_{g}.\end{align*}
Here, $\bar{\nu}_{f}$ is the \textnormal{Lebesgue-Stieljes} positive measure on $\big(\Rnum,\mathcal{B}(\Rnum)\big)$ associated with monotone non decreasing function $f$ on $\Rnum$, and we emphasize that right continuity of $f$ is not required here. In fact, the value of function $f$ at the discontinuous point and its \textnormal{Lebesgue-Stieljes} measure $\bar{\nu}_{f}$ is independent (this point has been implicitly used many times in this paper). For details, see the Theorem 1.7.9 and exercise 1.7.12 in \cite{Tao}. From the uniqueness theorem of measures, it is easy to deduce the following well-known lemmas:
\begin{lemma}\label{measure}
If $F,G$ is are bounded variation functions on $\Rnum$, order~$\Psi=F+G$ then
	\begin{align}
		\bar{\nu}_{\Psi}=\bar{\nu}_{F}+\bar{\nu}_{G},\label{measure1}
	\end{align}
In particular, when $F,G\in\V_{[0,T]}$, order~$\Psi=F+G$ then
	\begin{align}
		\nu_{\Psi}=\nu_{F}+\nu_{G},\label{measure1}
	\end{align}
Here, $\nu_{\Psi}$ is the limit of the measure $\bar{\nu}_{\Psi^0}$ associated with the extension $\Psi^0$ of function $\Psi$ on $\Rnum$ on $\big([0,T],\, \mathcal{B}([0,T])\big)$, see notation \ref{jihao 001}.
\end{lemma}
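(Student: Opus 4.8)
The plan is to verify the claimed additivity first on a generating $\pi$-system of intervals, where it reduces to the defining values of a Lebesgue--Stieltjes measure, and then to invoke the uniqueness theorem for measures to upgrade this to equality of the (signed) measures on all of $\mathcal{B}(\R)$. Since every step is linear, the second, restricted, assertion will follow at the end simply by tracing the global identity down to $\mathcal{B}([0,T])$.

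First I would treat the case in which $F$ and $G$ are both monotone nondecreasing, so that $\bar\nu_F,\bar\nu_G$ are genuine nonnegative (Radon) Lebesgue--Stieltjes measures and $\Psi=F+G$ is again nondecreasing. For any half-open interval $(a,b]$ the defining property of the associated measure gives $\bar\nu_\Psi((a,b]) = \Psi(b^+)-\Psi(a^+) = [F(b^+)-F(a^+)]+[G(b^+)-G(a^+)] = \bar\nu_F((a,b])+\bar\nu_G((a,b])$, where only one-sided limits of the generating functions enter; this is exactly the point emphasized after Notation \ref{jihao 001}, namely that the value of a generating function at a jump is irrelevant to its measure. Hence $\bar\nu_\Psi$ and $\bar\nu_F+\bar\nu_G$ agree on the collection of half-open intervals, which is a $\pi$-system generating $\mathcal{B}(\R)$ and on which both measures are finite on bounded sets (finite total variation of a $\V$-function over a bounded interval). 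By the uniqueness theorem for measures (the Dynkin $\pi$--$\lambda$ argument), the two measures coincide on all Borel sets.

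Next I would remove the monotonicity hypothesis via the Jordan decomposition. Writing $F=F_1-F_2$ and $G=G_1-G_2$ with $F_i,G_i$ nondecreasing, the definition $\bar\nu_{(f-g)}=\bar\nu_f-\bar\nu_g$ together with the nondecreasing case gives $\bar\nu_{F+G} = \bar\nu_{(F_1+G_1)-(F_2+G_2)} = (\bar\nu_{F_1}+\bar\nu_{G_1})-(\bar\nu_{F_2}+\bar\nu_{G_2}) = \bar\nu_F+\bar\nu_G$. The same computation, applied to two different nondecreasing decompositions of a single $F$, shows that $\bar\nu_F$ does not depend on the chosen decomposition, so the signed measure is well defined; this well-definedness is implicitly needed for the displays above to make sense. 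This establishes the first, global form of \eqref{measure1}.

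Finally, the restricted statement follows because the extension-by-zero map $\Psi\mapsto\Psi^0$ is linear, so $(F+G)^0=F^0+G^0$, and because the trace of a measure on the sub-$\sigma$-algebra $\mathcal{B}([0,T])$ is additive. Applying the global identity to $\bar\nu_{(F+G)^0}=\bar\nu_{F^0}+\bar\nu_{G^0}$ and restricting both sides to $\big([0,T],\mathcal{B}([0,T])\big)$ yields $\nu_\Psi=\nu_F+\nu_G$, noting that the endpoint Dirac contributions produced by the extension (as in \eqref{jieshi01}) are themselves linear in the generating function and cause no trouble. I do not expect any genuine obstacle; the only points requiring care are the finiteness needed to apply the uniqueness theorem in the signed setting and the decomposition-independence of $\bar\nu_F$, both of which are routine.
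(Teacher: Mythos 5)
Your proposal is correct and follows essentially the same route as the paper, which offers no written proof but explicitly attributes the lemma to the uniqueness theorem of measures: you verify agreement on the $\pi$-system of half-open intervals (where only one-sided limits of the generating functions enter, matching the paper's remark that values at discontinuities are irrelevant), invoke uniqueness, extend to general bounded variation functions via the Jordan decomposition and the definition $\bar{\nu}_{(f-g)}=\bar{\nu}_f-\bar{\nu}_g$, and obtain the restricted statement by linearity of $\Psi\mapsto\Psi^0$ and of restriction. Your added check that $\bar{\nu}_F$ is independent of the chosen nondecreasing decomposition is a worthwhile detail the paper leaves implicit.
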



The following Lemma~\ref{partial integral} on the Partial integral formula of measure is one of the main bases for proving Proposition \ref{key1}, which is taken from Lemma 3.1 of \cite{withdingzhen}. The key is to regard the value of the function at two endpoints in the general Partial integral formula as a measure about two Dirac points (or called Dirac $\delta$ generalized function). The two point measures are absorbed into the Lebesgue-Stieljes measure associated with the bounded variation function. This processing method first extends the bounded variation function, and then restricts the Lebesgue-Stieljes measure generated by the extended function back to the original support set of the bounded variation function. See \cite{Jolis, withdingzhen} for details, and the notation \ref{jihao 001} and formula \eqref{jieshi01} in this paper.
\begin{lemma}\label{partial integral}
    Let~$[a,b]$ be a compact interval with positive length, $\phi$:$[a,b]\rightarrow\R$ be continuous on~$[a,b]$ and differentiable on~$(a,b)$. If~$\phi'$ is absolutely integrable, then for any~$f\in\V_{[a,b]}$, we have
	\begin{align}
		-\int_{[a,b]}f(t)\phi'(t)\dif t=\int_{[a,b]}\phi(t) {\nu_f}(\dif t).\label{01}
	\end{align}
	where $\nu_f$ is a restriction on ~$\big({[a,b]},\mathcal{B}({[a,b]})\big)$ of the \textnormal{Lebesgue-Stieljes} measure on $\big(\Rnum,\mathcal{B}(\Rnum)\big)$ associated with
	\begin{equation*}
f^0(x)=\left\{
      \begin{array}{ll}
f(x), & \quad \text{if } x\in [a,b],\\
0, &\quad \text{other }.
 \end{array}
\right.
\end{equation*} 
\end{lemma}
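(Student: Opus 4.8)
The plan is to establish \eqref{01} by reducing to the monotone case, identifying both the distribution function and the total mass of the zero-extension measure $\nu_f$, and then swapping the order of integration by Fubini.

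First I would use linearity to reduce to $f$ non-decreasing: both sides of \eqref{01} are linear in $f$ --- the left-hand side obviously, and the right-hand side by Lemma \ref{measure} --- so by the Jordan decomposition $f=f_1-f_2$ into monotone non-decreasing pieces it suffices to treat a single non-decreasing $f$. The values of $f$ at its jump points are irrelevant (as stressed just before Lemma \ref{measure}), so I may freely work with the right-continuous version.

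Second, and this is the heart of the matter, I would pin down two facts about the zero-extension measure. Writing $f^0$ for the extension of $f$ by $0$ off $[a,b]$, its associated Lebesgue--Stieltjes measure puts an atom of mass $f(a)$ at $a$ (the jump up from $0$) and an atom of mass $-f(b)$ at $b$ (the jump down to $0$), in addition to the interior mass of $f$ on $(a,b)$. Restricting to $[a,b]$, this yields on the one hand the distribution-function identity
\begin{equation*}
f(t)=\nu_f([a,t]),\qquad t\in[a,b),
\end{equation*}
since $\nu_f([a,t])=f(a)+\big(f(t)-f(a)\big)=f(t)$, and on the other hand the total-mass identity
\begin{equation*}
\nu_f([a,b])=f^0(b^+)-f^0(a^-)=0,
\end{equation*}
the two endpoint atoms cancelling the net interior variation. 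This is precisely the bookkeeping that the zero-extension convention of Notation \ref{jihao 001} (and formula \eqref{jieshi01}) is designed to make automatic.

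Third, I would carry out the Fubini exchange. Since $\phi$ is continuous on $[a,b]$, differentiable on $(a,b)$, and $\phi'$ is Lebesgue integrable, $\phi$ satisfies the fundamental theorem of calculus, so $\phi(b)-\phi(s)=\int_s^b\phi'(t)\,\dif t$. Substituting $f(t)=\nu_f([a,t])=\int_{[a,b]}\1_{\{s\le t\}}\,\nu_f(\dif s)$ and applying Fubini (legitimate because $\nu_f$ is finite and $\phi'\in L^1$) gives
\begin{align*}
\int_a^b f(t)\phi'(t)\,\dif t
&=\int_{[a,b]}\Big(\int_s^b\phi'(t)\,\dif t\Big)\nu_f(\dif s)\\
&=\int_{[a,b]}\big(\phi(b)-\phi(s)\big)\,\nu_f(\dif s)
=\phi(b)\,\nu_f([a,b])-\int_{[a,b]}\phi(s)\,\nu_f(\dif s).
\end{align*}
By the total-mass identity $\nu_f([a,b])=0$ the first term vanishes, leaving $-\int_{[a,b]}f(t)\phi'(t)\,\dif t=\int_{[a,b]}\phi(s)\,\nu_f(\dif s)$, which is \eqref{01}.

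I expect the main obstacle to be the second step: correctly accounting for the two endpoint atoms of $\nu_f$ under the zero-extension convention when $f$ need not be right-continuous and its values at jump points are decoupled from the measure. In particular, the clean cancellation $\nu_f([a,b])=0$ and the identity $f(t)=\nu_f([a,t])$ should be justified carefully (for instance via the uniqueness theorem for measures, in the spirit of Lemma \ref{measure}) rather than taken for granted; once these are in hand, the reduction to the monotone case and the Fubini computation are routine.
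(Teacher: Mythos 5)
Your proposal is correct, but it is a genuinely self-contained route, whereas the paper never proves this lemma in-text: it simply cites Lemma 3.1 of \cite{withdingzhen}, Proposition 1.6.41 of \cite{Jolis} and exercise 1.7.17 of \cite{Tao}, describing the idea as ``take the classical integration-by-parts formula for continuous monotone functions and absorb the two boundary terms $f(b)\phi(b)-f(a)\phi(a)$ into the Dirac masses that $\nu_f$ carries at $a$ and $b$.'' Your Fubini argument proves the same mechanism from scratch: the total-mass identity $\nu_f([a,b])=0$ is exactly the cancellation of boundary terms, and the layer-cake substitution $f(t)=\nu_f([a,t])=\int_{[a,b]}\1_{\{s\le t\}}\,\nu_f(\dif s)$ replaces the appeal to the known Stieltjes integration-by-parts formula. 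What your route buys is transparency and independence from the cited references; what the paper's route buys is brevity. Two points in your write-up deserve sharpening. First, the atom of $\nu_f$ at $b$ is $-f(b^-)$, not $-f(b)$, when $f$ jumps at $b$; this is harmless because only the total-mass identity and the almost-everywhere identity $f(t)=\nu_f([a,t])$ (valid off the countable set of jumps, hence Lebesgue-a.e.) enter the computation, and both are correct as you state them. Second, your assertion that $\phi$ ``satisfies the fundamental theorem of calculus'' is the one place the hypotheses on $\phi$ really work: continuity on $[a,b]$, differentiability on $(a,b)$ and $\phi'\in L^1$ do imply $\phi(x)-\phi(s)=\int_s^x\phi'(t)\,\dif t$, but this is a nontrivial theorem (a Rudin-type strengthening of the fundamental theorem of calculus, applied on $[s,x]\subset(a,b)$ and extended to the endpoints by continuity and dominated convergence), not a tautology -- it is precisely what rules out Cantor-function pathologies -- so it should be cited or proved rather than asserted.
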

\begin{remark} Lemma \ref{partial integral} is a rewriting of the Partial integral formula of continuous monotone increasing function (such as \cite{Tao} exercise 1.7.17). Its proof comes from Proposition 1.6.41 of \cite{Jolis} and \cite{Tao}. See Lemma 3.1 of \cite{withdingzhen} for details.
\end{remark}
From formulas \eqref{bth zengliang},\,\eqref{inner product_1}, 
and the above lemma, we have the following inference:
\begin{corollary}\label{coro-gu-3}
		Let $0 \leq a < b \leq c < d \leq T$, the bounded variation functions $f(s)$ and~$g(t)$ are supported on $[a,b]$ and $[c,d]$ respectively. If $H\in(0,\frac12)$, then
		\begin{align}\langle f,g\rangle_\Hb=\alpha_H\int^b_a f(s) \dif s \int^d_c g(t) (t-s)^{2H-2}\dif t \label{tl}.
		\end{align}
\end{corollary}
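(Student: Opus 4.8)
The plan is to start from the Jolis-type representation \eqref{inner product_1} and integrate by parts twice, once in each variable, using Lemma \ref{partial integral} to trade the two Lebesgue--Stieltjes measures $\nu_f,\nu_g$ for the ordinary densities $f,g$ at the cost of differentiating the covariance kernel. Concretely, I would begin with
\[
\langle f,g\rangle_\Hb=\int_{[0,T]^2}R_H(s,t)\,\nu_f(\dif s)\,\nu_g(\dif t),
\]
and, since $\nu_f,\nu_g$ are finite on the compact intervals $[a,b]$ and $[c,d]$, apply Fubini to carry out the $t$-integration first. For each fixed $s\in[a,b]$ the map $t\mapsto R_H(s,t)$ is $C^1$ on $[c,d]$, because $t-s\ge c-b\ge 0$ keeps us off the diagonal singularity of \eqref{r_h}; hence Lemma \ref{partial integral} applied on $[c,d]$ yields $\int R_H(s,t)\,\nu_g(\dif t)=-\int_c^d g(t)\frac{\partial R_H}{\partial t}(s,t)\,\dif t$, which is exactly the second equality already recorded in \eqref{inner product_1}.

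Next I would treat $\psi(s):=-\int_c^d g(t)\frac{\partial R_H}{\partial t}(s,t)\,\dif t$ as a function of $s$, which is $C^1$ on $[a,b]$ under the same separation of supports, and apply Lemma \ref{partial integral} a second time, now on $[a,b]$, to get $\langle f,g\rangle_\Hb=\int_{[0,T]}\psi(s)\,\nu_f(\dif s)=-\int_a^b\psi'(s)f(s)\,\dif s$. Differentiating under the integral sign gives $\psi'(s)=-\int_c^d g(t)\frac{\partial^2 R_H}{\partial s\,\partial t}(s,t)\,\dif t$, so the whole computation reduces to the mixed second derivative of the kernel on the region $s\le b\le c\le t$. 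There, writing $|t-s|=t-s$ in \eqref{r_h}, a direct computation gives
\[
\frac{\partial^2 R_H}{\partial s\,\partial t}(s,t)=H(2H-1)(t-s)^{2H-2}=\alpha_H(t-s)^{2H-2},
\]
and substituting this back produces exactly the right-hand side of \eqref{tl}. As an independent sanity check I would verify the same formula on the indicators $f=\1_{[a,b)}$, $g=\1_{[c,d)}$ directly from \eqref{HB} and \eqref{bth zengliang}, where $|u-v|^{2H-2}=(v-u)^{2H-2}$ on the relevant block.

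The hard part will be the boundary case $b=c$, where the two supports touch and $\alpha_H(t-s)^{2H-2}$ blows up at the corner $(b,c)$: since $2H-2<-1$ the mixed derivative is \emph{not} locally integrable across the diagonal, so I cannot apply the integration-by-parts steps with $t-s$ allowed to reach $0$. My plan is to first establish \eqref{tl} for strictly separated supports $b<c$, where the Fubini and $C^1$ arguments above are clean, and then recover $b=c$ by a limiting argument $c\downarrow b$. On the right-hand side of \eqref{tl} this limit stays finite because, although the inner $t$-integral diverges like $(b-s)^{2H-1}$ as $s\uparrow b$, the exponent $2H-1>-1$ keeps the outer $s$-integral convergent (using that $f,g\in\V_{[0,T]}$ are bounded); on the left-hand side the limit is controlled by the continuity of the $\Hb$-inner product. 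Alternatively, the bilinear extension from indicators to step functions via \eqref{bth zengliang} followed by a density argument in $\Hb$ avoids the differentiation-under-the-integral bookkeeping altogether, at the price of an approximation step.
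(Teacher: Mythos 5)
Your proposal is correct and is essentially the paper's own (implicit) argument: the corollary is stated there as a direct consequence of the Jolis representation \eqref{inner product_1}, the integration-by-parts Lemma \ref{partial integral}, and the off-diagonal identity $\frac{\partial^2 R_H}{\partial s\partial t}(s,t)=\alpha_H(t-s)^{2H-2}$, which is exactly your two-step computation (with \eqref{bth zengliang} serving as the indicator-function check). The only point of difference is that your separate limiting treatment of the touching case $b=c$ is unnecessary: Lemma \ref{partial integral} requires differentiability only on the \emph{open} interval together with absolute integrability of the derivative, and since $\psi'(s)=-\alpha_H\int_c^d g(t)(t-s)^{2H-2}\,\dif t$ blows up near $s=b$ only like $(b-s)^{2H-1}$ with $2H-1>-1$, the direct integration by parts already covers $b=c$.
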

\begin{remark}
Since the set of bounded variation functions $\V_{[0,T]}$ is a dense subset of Hilbert space $\Hb$, the inner product formula \eqref{tl} based on the continuity of the inner product is still valid for any function supporting disjoint in Hilbert space $\Hb$.
\end{remark}
In the Partial integration formula of measure, namely Lemma~\ref{partial integral}, take $\phi\equiv 1$ to get:
\begin{corollary}\label{coro-3-chen1}
Let function $\varphi\in \V^{\otimes2}_{[0,T]}$, and $\frac{\partial_{a}}{\partial s},\,\frac{\partial_{a}\partial_{b}}{\partial s \partial t}$ as shown in mark \ref{jiaohao suanzi}. If functions $f,\,g$ are bounded \textnormal{Borel} measurable functions on set $[(a-1)\vee 0,\,(a+1)\wedge T]$ and set  $[(b-1)\vee 0,\,(b+1)\wedge T]$ respectively, then:
\begin{align*}
    \int^{(a+1)\wedge T}_{(a-1)\vee 0}\frac{\partial_{a}}{\partial s} \varphi(s,t) \dif s &=0,\\
   \int^{(b+1)\wedge T}_{(b-1)\vee 0} g(t)\dif t \int^{(a+1)\wedge T}_{(a-1)\vee 0} \frac{\partial_{a}\partial_{b}}{\partial s \partial t} \varphi(s,t) \dif s &=0,\\
   \int^{(a+1)\wedge T}_{(a-1)\vee 0}f(s) \dif s\int^{(b+1)\wedge T}_{(b-1)\vee 0}    \frac{\partial_{a}\partial_{b}}{\partial s \partial t} \varphi(s,t) \dif t &=0.
\end{align*}
\end{corollary}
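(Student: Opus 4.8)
The plan is to derive all three identities from Lemma~\ref{partial integral} via the single observation that choosing $\phi\equiv 1$ makes its left-hand side vanish. Indeed, $\phi\equiv 1$ gives $\phi'\equiv 0$, so Lemma~\ref{partial integral} yields $\int_{[\alpha,\beta]}\nu_h(\dif s)=0$ for every $h\in\V_{[\alpha,\beta]}$; that is, the total mass over the full interval of the Lebesgue--Stieljes measure of the zero-extended $h$ is zero. The mechanism is transparent from \eqref{jieshi01}: the interior contribution $\int_\alpha^\beta h'(s)\,\dif s=h(\beta)-h(\alpha)$ is cancelled exactly by the two endpoint Dirac masses $+h(\alpha)$ and $-h(\beta)$ produced by the zero-extension. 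This is the only analytic input; everything else is bookkeeping.

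For the first identity I would fix $t$ and apply this to the function $s\mapsto\varphi(s,t)\cdot\1_{I_a}(s)$, where $I_a:=[(a-1)\vee 0,(a+1)\wedge T]$, whose associated measure has density $\frac{\partial_a}{\partial s}\varphi(s,t)$ by definition (Notation~\ref{jiaohao suanzi}(1)). Integrating that density over $I_a$ returns the total mass, which is $0$ by the paragraph above, and this is exactly the claim.

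For the second and third identities I would view $\frac{\partial_a\partial_b}{\partial s\partial t}\varphi$ as the density of the two-dimensional Lebesgue--Stieljes measure $\nu$ of $\psi:=\varphi\cdot\1_{I_a}(s)\cdot\1_{I_b}(t)$ on the rectangle $I_a\times I_b$ (Notation~\ref{jiaohao suanzi}(3)), with $I_b:=[(b-1)\vee 0,(b+1)\wedge T]$. Since $\varphi\in\V_{[0,T]}^{\otimes2}$, the measure $\nu$ has finite total variation, so Fubini applies and the inner integral $\int_{I_a}\frac{\partial_a\partial_b}{\partial s\partial t}\varphi(s,t)\,\dif s$ computes the $s$-marginal of $\nu$. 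Applying the one-dimensional fact of the first paragraph in the $s$-variable, holding the $t$-partial structure fixed as a parameter, shows this $s$-marginal is the zero measure in $t$; the second identity then follows upon integrating against $g(t)\,\dif t$. The third identity is identical after exchanging the roles of $(s,a,f)$ and $(t,b,g)$, using the symmetry of the construction.

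The step requiring the most care is the two-dimensional one: one must justify the interchange of the iterated integrations for the signed measure $\nu$ (legitimate by the finite total variation coming from $\varphi\in\V_{[0,T]}^{\otimes2}$) and check that the endpoint Dirac contributions of the zero-extension cancel in the $s$-direction, including the degenerate cases in which an endpoint of $I_a$ collapses onto $0$ or $T$ through the truncations $(\cdot)\vee 0$ and $(\cdot)\wedge T$. Once the $s$-marginal (respectively the $t$-marginal) is shown to vanish identically, both double identities are immediate.
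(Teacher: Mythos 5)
Your proposal is correct and is essentially the paper's own argument: the paper proves this corollary precisely by taking $\phi\equiv 1$ in Lemma~\ref{partial integral}, so that the vanishing of the left-hand side forces the total mass of the associated Lebesgue--Stieljes measure to be zero, which is exactly your key observation. Your additional bookkeeping (Fubini for the two-dimensional cases and the endpoint-truncation check) is a sound elaboration of the same idea rather than a different route.
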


In the rest of this section, we give the proof of Proposition~\ref{key1}.

	{\bf Proof of Proposition~\ref{key1}:}
        The method is to use measure decomposition. First, determine~$t\in[0,T] $, and divide~$s\in [0,T]$ into the following three intervals~$O_1:=[0, (t-1)\vee0), \, O_2:=[(t-1)\vee0,(t+1)\wedge T], O_3=((t+1)\wedge T,\, T]$;
	\begin{center}
		\begin{tikzpicture}
			\draw[->] (-2.5,-2) -- (2.9,-2);
			\draw[gray] (-2,-2) -- (-2,-2+0.1);
			\draw[gray] (2,-2) -- (2,-2+0.1);
			\draw[-] (0.678-0.6,-2) -- (0.678-0.6,-2+0.1);
			\draw[-] (0.678+0.4,-2) -- (0.678+0.4,-2+0.1);
			\node[above right] at (2+0.5,-2) {$s$};
			\node[below] at (-2,-2) {$0$};
			\node[below] at (2,-2) {$T$};
			\node[above] at (-1.011,-2) {$O_1$};
			\node[above] at (1.589,-2) {$O_2$};
			\node[above] at (0.678-0.1,-2) {$O_3$};
			\node[below] at (0.678+0.5,-2) {$t+1$};
			\node[below] at (0.678-0.7,-2) {$t-1$};
		\end{tikzpicture}  
	\end{center}
    Then the function $f(s)$ is decomposed into the restriction of the above three intervals, so that the measure $\nu_f$ is decomposed into the sum of the Lebesgue-Stieljes measures associated with the three. The specific steps are as follows:

		First, review formula \eqref{inner product_1}:
		\begin{align}
			\langle f,g\rangle_{\Hb}=-\int_0^Tg(t)\dif t\int^T_0\frac{\partial R_H}{\partial t}(s,t)\nu_{f}(\mathrm{d}s).\label{inner product_3}
		\end{align}
        For any given~$t\in[0,T]$, decompose function $f(s)\in\V_{[0,T]}$ as shown in the figure above:
		\begin{align*}
			\begin{split}
				f(s)&=f(s)\big(\1_{[0,(t-1)\vee0]}(s)+\1_{[(t-1)\vee0,(t+1)\wedge T]}(s)+\1_{((t+1)\wedge T,T]}(s)\big)\\
				:&=f_t^1(s)+\tilde{f_t}(s)+f_t^2(s).
			\end{split}
		\end{align*}
		According to Lemma~\ref{measure} 
		know the measure $\nu_f$ has the following decomposition:
		\begin{align}
			\nu_{f}=\nu_{f_t^1}+\nu_{\tilde{f_t}}+\nu_{f_t^2},\label{2disintegration}
		\end{align}
		Here, the four measures are \textnormal{Lebesgue-Stieljes} measures defined on $\big([0,T],\mathcal{B}([0,T]\big)$. Substitute formula~\eqref{2disintegration} into formula~\eqref{inner product_3} to get:
		\begin{align}
			\begin{split}
				\langle f,g\rangle_{\Hb}=&-\int_0^Tg(t)\dif t\int^T_0\frac{\partial R_H}{\partial t}(s,t)\nu_{f_t^1}(\mathrm{d}s)-\int_0^Tg(t)\dif t\int^T_0\frac{\partial R_H}{\partial t}(s,t)\nu_{\tilde{f}_t}(\mathrm{d}s)\\
				&-\int_0^Tg(t)\dif t\int^T_0\frac{\partial R_H}{\partial t}(s,t)\nu_{f_t^2}(\mathrm{d}s):=I_1+I_2+I_3\label{inner product}.
			\end{split}
		\end{align}
        Note that the support set of function $f_t^1(s)$ is $[0,(t-1)\vee0]$, then
		\begin{align*}
			\label{I_1}\int^T_0\frac{\partial R_H}{\partial t}(s,t)\nu_{f_t^1}(\mathrm{d}s)=
			\begin{cases}
				0 & \,\, t\in[0,1],\\
				\int^T_0 \1_{[0,t-1]}(s)\frac{\partial R_H}{\partial t}(s,t){{\nu}_{f_t^1}}(\mathrm{d}s)& \,\, t\in(1,T].
			\end{cases}
		\end{align*}
	Note that when $t\in(1,T]$, integral in the right end of the above equation $$\int^T_0 \1_{[0,t-1]}(s)\frac{\partial R_H}{\partial t}(s,t){{\nu}_{f_t^1}}(\mathrm{d}s)=\int^{t-1}_0\frac{\partial R_H}{\partial t}(s,t){{\nu}_{f_t^1}}(\mathrm{d}s),$$
    where, in fact, measure ${\nu}_{f_t^1}$ at the right end can be understood as only defined on $\big([0,t-1],\, \mathcal{B}([0,t-1])\big)$; however function $\frac{\partial^2R_H }{\partial s\partial t}(s,t)$, as a function of argument $s$, is absolutely integrable on $[0,t-1]$, so it is obtained from Lemma ~\ref{partial integral}:
		\begin{align*}
			\int^{t-1}_0\frac{\partial R_H}{\partial t}(s,t){{\nu}_{f_t^1}}(\mathrm{d}s)=-\int^{t-1}_0f(s)\frac{\partial^2R_H }{\partial s\partial t}(s,t)\dif s .
		\end{align*}
		Thus:
		\begin{equation}
			\begin{split}
				I_1
				=\alpha_H\int^T_{1} g(t) \dif t\Big(\int^{t-1}_0 f(s) (t-s)^{2H-2}\dif s\Big)\label{I1}.\end{split}
		\end{equation}
		Similarly, because the support set of function $f_t^2(s)$ is $[(t+1)\wedge T,\, T]$, we have
		\begin{align*}
			 \int^T_0\frac{\partial R_H}{\partial t}(s,t)\nu_{f_t^2}(\mathrm{d}s)=
			\begin{cases}
				\int_{t+1}^T\frac{\partial R_H}{\partial t}(s,t) {{\nu}_{f_t^2}}(\mathrm{d}s) & \,\, t\in[0,T-1),\\
				0 & \,\, t\in[T-1,T],
			\end{cases}
		\end{align*}
and
		\begin{align*}
			\int_{t+1}^T\frac{\partial R_H}{\partial t}(s,t) {{\nu}_{f_t^2}}(\mathrm{d}s)=-\int_{t+1}^Tf(s)\frac{\partial^2 R_H}{\partial s\partial t}(s,t)\dif s.
		\end{align*}
and
		\begin{align}\label{I3 new}
				I_3=\alpha_H\int^T_{1} f(s) \dif s\int^{s-1}_0 g(t) (s-t)^{2H-2}\dif t.  
		\end{align}
		Substitute formula \eqref{I1},~\eqref{I3 new} into formula \eqref{inner product} to get formula \eqref{inner product_2}. 
		Finally, the proof of formula \eqref{inner product_4} is the same.
		{\hfill\large{$\Box$}}
	

	\section{Proof of main Theorems}\label{sec three}
 Without losing generality,  {this section assumes the parameters $\theta=1$ in definitions \eqref{ftst defn} and \eqref{ht ts} of binary functions $f_T(t,s)$ and $h_T(t,s)$}. 
 
{\bf Proof of Proposition~\ref{jiaochaxiang}}:
    Firstly, let $t\in [0,T]$ be determined, and understand $f_T(t,\cdot)$ as a function of one variable on $s\in [0,T]$. Then notice that the binary function $h_T$ can be expressed as a function of one variable $\phi_T(t)=e^{t-T} \mathbb{1}_{[0,T]}(t)$ tensor about oneself, namely $h_T(t,s)=\phi_T(t)\phi_T(s)$. So according to the Fubini theorem, we have:
    \begin{align}\label{zhagnliang fenjie}
      \innp{f_T,\, h_T}_{{\Hb}^{\otimes 2}}=  \innp{\innp{f_T(t,\cdot),\, \phi_T}_{{\Hb} },\,\phi_T}_{{\Hb} }.
    \end{align}
    
    Secondly, calculate the inner product $\innp{f_T(t,\cdot),\, \phi_T}_{\Hb} $ when $t\in [0,T]$ is taken. According to the linear property of the inner product, we have:
    \begin{align}\label{zuihoufenjie}
        \innp{f_T(t,\cdot),\, \phi_T}_{\Hb} =\innp{f^1,\, h^1}_{\Hb}+\innp{f^1,\, h^2}_{\Hb}+\innp{f^2,\, h^1}_{\Hb}+\innp{f^2,\, h^2}_{\Hb},
    \end{align}here function
\begin{align*}
f^1(\cdot)&=f_T(t, \cdot) \mathbb{1}_{[0,t)}(\cdot),\quad f^2(\cdot)=f_T(t, \cdot) \mathbb{1}_{[t,T]}(\cdot)\\
    h^1(\cdot)&=\phi_T(\cdot) \mathbb{1}_{[0,t)}(\cdot),\qquad h^2(\cdot)=\phi_T(\cdot) \mathbb{1}_{[t,T]}(\cdot).
\end{align*} 
According to the support set and inner product calculation formula \eqref{inner product_1} or \eqref{neijibiaochu 0} of the above four functions, respectively we have:
\begin{align*}
      \innp{f^2,\, h^1}_{\Hb}&= \alpha_H e^{t-T}\int_t^T \dif u \int_0^t\dif v\, e^{-u +v } (u-v)^{2H-2} ,\\ 
   \innp{f^1,\, h^2}_{\Hb}&= \alpha_H e^{-t-T}\int_0^t\dif u\int_t^T \dif v\, e^{u +v } (v-u)^{2H-2},\\
   \innp{f^1,\, h^1}_{\Hb}&= -H e^{-t-T}\int_{[0,t]^2} e^{u+v}(1-\delta_t(u)) \big(v^{2H-1}-\abs{v-u}^{2H-1}\sgn{(v-u)} \big)\dif u\dif v,\\
   &=H e^{-T} \int_0^t (e^{v-t}+e^{t-v}) v^{2H-1}\dif v,\\
    \innp{f^2,\, h^2}_{\Hb}&= -H e^{t-T} \int_{[t,T]^2} e^{-u+v}(-1+\delta_t(u)-\delta_T(u)) \big(v^{2H-1}-\abs{v-u}^{2H-1}\sgn{(v-u)} \big)\dif u\dif v\\
    &=H e^{t-T} \Big[\int_0^{T-t}\dif y\int_0^y (e^{-x}-e^{x})x^{2H-1}\dif x + \int_0^{T-t} (e^{-x}+e^{x})x^{2H-1}\dif x\Big].
\end{align*}

Then we enlarge the inner product of the above items and  {$\phi_T$}: record $i,j=1,2$,
\begin{align}\label{fangda psiT}
     \abs{\innp{\innp{ {f^i},\, h^j}_{\Hb},\,\phi_T}_{\Hb}}\le  \int_0^T \dif t \abs{\innp{ {f^i},\, h^j}_{\Hb}}\abs{\int_0^T  e^{s-T}(1-\delta_T(s))\frac{\partial R(t,\,s)   }{\partial t} \dif s}. 
\end{align} 
We then assert that there is a constant $C_H$ independent of $T$, so that for any given $t\in [0,T]$, and we have inequality 
\begin{align}
 \abs{\int_0^T e^{s-T}(1-\delta_T(s))\frac{\partial R(t,\,s)   }{\partial t}  \dif s}&\le C_H \times \Big[ e^{-T} t^{2H-1}+ e^{t-T}+ (T-t)^{2H-1} \mathbb{1}_{(T-1,T]}(t) \notag\\
 &+ (T-t)^{2H-2} \mathbb{1}_{[0,T-1]}(t)\Big]\label{chudengbudengshi2}
\end{align} holds. In fact,
\begin{align*}
 &\abs{\int_0^T e^{s-T}(1-\delta_T(s))\frac{\partial R(t,\,s)   }{\partial t}  \dif s}\\
 &=H \abs{ t^{2H-1}\big(\int_0^T e^{s-T} \dif s -1) +e^{t-T}\Big[-\int_0^t e^{s-t}(t-s)^{2H-1}\dif s  + \int_t^T e^{s-t}(s-t)^{2H-1} \dif s\Big]-(T-t)^{2H-1} }\\
 &\le C_H\times \Bigg[ t^{2H-1}e^{-T}+\abs{e^{t-T}\Big[{-\int_0^t e^{-u}u^{2H-1}\dif u  + 2\int_0^{1 } e^{u} u^{2H-1} \dif u}\Big]}+(T-t)^{2H-1} \mathbb{1}_{(T-1,T]}(t)\\
 &+  \mathbb{1}_{[0,T-1]}(t)\abs{ e^{t-T}\int_1^{T-t} {e^u}u^{2H-1}\dif u -(T-t)^{2H-1}}\\
 &\le C_H\times  \Big[ e^{-T} t^{2H-1}+ e^{t-T}  + (T-t)^{2H-1} \mathbb{1}_{(T-1,T]}(t)+ (T-t)^{2H-2} \mathbb{1}_{[0,T-1]}(t) 
\Big],
\end{align*}
where, the previous inequality can be seen in Lemma 2.2 of \cite{Cheridito 03}.

Finally, it is easy to see that there is a constant $C_H$ independent of $T$, which makes
\begin{align}\label{zuihou 1}
    \int_{T-1}^T \abs{\innp{ {f^i},\, h^j}_{\Hb}} (T-t)^{2H-1}\dif t\le C_H,
\end{align} and 
\begin{align*}
     \abs{\innp{ {f^i},\, h^j}_{\Hb}}\le C_H
\end{align*} hold for any $i,j=1,2$. From this inequality we have
\begin{align}\label{ziuhou 2}
    \int_{0}^T \abs{\innp{ {f^i},\, h^j}_{\Hb}} \Big[ e^{-T} t^{2H-1}+ e^{t-T} + (T-t)^{2H-2} \mathbb{1}_{[0,T-1]}(t) \Big] \dif t\le C_H.
\end{align}
Combining inequalities \eqref{fangda psiT}, \eqref{zuihou 1} and \eqref{ziuhou 2}: There is a constant $C_H$ independent of $T$, so that
\begin{align*}
      \abs{\innp{\innp{ {f^i},\, h^j}_{\Hb},\,\phi_T}_{\Hb}}\le C_H.
\end{align*}
According to the identities \eqref{zhagnliang fenjie} and \eqref{zuihoufenjie}, we can get the inequality \eqref{touyige budsh}.
{\hfill\large{$\Box$}}

{\bf Proof of Theorem~\ref{qq1}}:
    Recall that in Remark~1.1.2, we will draw a conclusion that is stronger than the formula \eqref{main} required by the theorem. That is, the square of the norm of the binary function $f_T(t,s)$ is taken as the function of $T$ and the asymptote \eqref{jianjinxian mubiao} when $T\to \infty$. Equation ~\eqref{jianjinxian mubiao} is proved in several steps as follows.
	
	\textbf{Step~1}. According to Proposition \ref{key2}, we get the decomposition formula of~$\|f_T\|_{\Hb^{\otimes2}}^2$.
	\begin{align*}
\begin{split}
			\|f_T\|_{{\Hb}^{\otimes2}}^2=&\alpha_H^2\sum_{i,j=1}^2\int_{\kappa_i\times \kappa_j}e^{-|s_1-t_1|}e^{-|s_2-t_2|}|s_1-s_2|^{2H-2}|t_1-t_2|^{2H-2}\dif \vec{s}\dif \vec{t}\\
		&-2\alpha_H\sum_{i=1}^2\int_{\kappa_3\times \kappa_i}e^{-|s_1-t_1|}\frac{\partial_{s_1}}{\partial s_2}e^{-|s_2-t_2|}\frac{\partial R_H}{\partial s_1}(s_1,s_2)\abs{t_1-t_2}^{2H-2}\dif \vec{s}\dif \vec{t}\\
		&+\int_{\kappa_3\times \kappa_3}e^{-|s_1-t_1|}\frac{\partial R_H}{\partial s_1}(s_1,s_2)\frac{\partial R_H}{\partial t_1}(t_1,t_2)\frac{\partial_{s_1}\partial_{t_1}}{\partial s_2\partial t_2}e^{-|s_2-t_2|} \dif \vec{s}\dif \vec{t}\\
		:=&\alpha_H^2 \sum_{i,j=1}^2M_{ij}(T) -2 \alpha_H\sum_{i=1}^2 M_{3i}(T) +M_{33}(T). 
\end{split}
	\end{align*}
Making the change of variables~$x=T-s_1,\,y=T-t_1,\, u=T-s_2,\, v=T-t_2$, we have:
	\begin{align*}
	M_{11}(T)=M_{22}(T)\,\,\,\text{\, and \,}\,\,\,M_{12}(T)=M_{21}(T).
	\end{align*}
So
\begin{align}
    	\|f_T\|_{{\Hb}^{\otimes2}}^2=M_{33}(T)+ 2\Big(\alpha_H^2\big(M_{11}(T)+M_{12}(T)\big)-\alpha_H\big(M_{31}(T)+M_{32}(T)\big)\Big).\label{fT norm2 fenjieshizi chufadian}
\end{align}	

\textbf{Step~2}. Solve the asymptote of function $M_{11}(T)+M_{12}(T)$ when $T\to\infty$. First,
	\begin{small}
	\begin{align}
M_{11}(T)&=\int^T_1\dif s_1\int^{T}_1e^{-\abs{s_1-t_1}}\dif      t_1\int^{s_1-1}_{0}(s_1-s_2)^{2H-2}\dif s_2\int^{t_1-1}_{0} (t_1-t_2)^{2H-2}e^{-|t_2-s_2|}\dif t_2\notag\\
	&=2\int^T_1e^{-s_1}\dif s_1\int^{s_1}_1e^{t_1}\dif t_1\int^{s_1-1}_{0}(s_1-s_2)^{2H-2}\dif s_2\int^{t_1-1}_{0} (t_1-t_2)^{2H-2}e^{-|t_2-s_2|}\dif t_2.\label{M11 biaodashi}\\
M_{12}(T)&=\int^T_1\dif s_1\int^{T-1}_0\dif t_1\int^{s_1-1}_0 (s_1-s_2)^{2H-2}\dif s_2\int^T_{t_1+1}(t_2-t_1)^{2H-2}e^{-|t_1-s_1|-|t_2-s_2|}\dif t_2. \label{M12T biaodashi}	
\end{align} 
\end{small}
Formula \eqref{M11 biaodashi} is the symmetry of integral with respect to two variables $s_1,t_1$. According to Lemma \ref{M11} and Lemma \ref{yyijh}, we get the function $M_{11}(T)+ M_{12}(T)$, and the asymptote when $T\to\infty$ is:
	\begin{align}
	T&\times \Bigg[(4H-1)\big(\int_1^{\infty} e^{-u} u^{2H-2}\dif u\big)^2+2\int_1^{\infty} (e^{1-u}+e^{-1-u})u^{2H-2}\dif u\notag\\
	&+2(4H-1)\int_1^{\infty} e^{-u} u^{2H-2} \dif u\int_1^u  e^{v} v^{2H-2}\dif v\Bigg] +C_H. \label{M11+M12 jianjinxian}
	\end{align}

\textbf{Step~3}. Solve the asymptote of function $M_{31}(T)+M_{32}(T)$ when $T\to\infty$. First,
\begin{align}
M_{31}(T)&=\int_{\kappa_3\times \kappa_1}e^{-|s_1-t_1|}\frac{\partial_{s_1}}{\partial s_2}e^{-|s_2-t_2|}\frac{\partial R_H}{\partial s_1}(s_1,s_2)(t_1-t_2)^{2H-2}\dif \vec{s}\dif \vec{t}\notag\\
&=H \int^{T}_{0}\dif s_1\int^T_1 e^{-|s_1-t_1|}\dif t_1\int^{t_1-1}_0 (t_1-t_2)^{2H-2}\dif t_2\int^{(s_1+1)\wedge T}_{(s_1-1)\vee 0}\frac{\partial_{s_1}}{\partial s_2}e^{-|s_2-t_2|}\notag\\
&\times \big(s_1^{2H-1}-\mathrm{sgn}(s_1-s_2)|s_1-s_2|^{2H-1}\big) \dif s_2. \label{m31 biaoshi}
	\end{align}
From Inference \ref{coro-3-chen1}, we have:
		\begin{align*}
\int^{T}_{0}s_1^{2H-1} \dif s_1\int^T_1e^{-|s_1-t_1|}\dif t_1\int^{t_1-1}_0 (t_1-t_2)^{2H-2}\dif t_2\int^{(s_1+1)\wedge T}_{(s_1-1)\vee 0}\frac{\partial_{s_1}}{\partial s_2}e^{-|s_2-t_2|} \dif s_2=0.
	\end{align*}
Put it in \eqref{m31 biaoshi}, we have
\begin{align}  
M_{31}(T)&=-H \int^{T}_{0}\dif s_1\int^T_1 e^{-|s_1-t_1|}\dif t_1\int^{t_1-1}_0 (t_1-t_2)^{2H-2}\dif t_2\int^{(s_1+1)\wedge T}_{(s_1-1)\vee 0}\frac{\partial_{s_1}}{\partial s_2}e^{-|s_2-t_2|}\notag\\
&\times \mathrm{sgn}(s_1-s_2)|s_1-s_2|^{2H-1} \dif s_2:=H\times [N(T)-\tilde{N}(T)]
,
	\end{align} here
\begin{align}
N(T)&=\int^T_0\dif s_1\int^T_1e^{-|t_1-s_1|}\dif t_1\int^{t_1-1}_0 (t_1-t_2)^{2H-2} \dif t_2\int^{(s_1+1)\wedge T}_{(s_1-1)\vee 0}\sgn(s_1-s_2)|s_1-s_2|^{2H-1}\notag\\
&\times \sgn(s_2-t_2)e^{-|t_2-s_2|}\dif s_2 ;\label{nT expression}\\
\tilde{N}(T)&= \int^T_0\dif s_1\int^T_1 e^{-|t_1-s_1|}\dif t_1\int^{t_1-1}_0 (t_1-t_2)^{2H-2} \dif t_2\int^{(s_1+1)\wedge T}_{(s_1-1)\vee 0} e^{-|t_2-s_2|}\notag\\
&\times \sgn(s_1-s_2)|s_1-s_2|^{2H-1} \big( \delta_{(s_1-1)\vee 0}(s_2)-\delta_{(s_1+1)\wedge T}(s_2)\big)\dif s_2. \label{bar nT expression}
\end{align}
Similarly, we have: 
\begin{align}  
M_{32}(T)&=-H \int^{T}_{0}\dif s_1\int^T_1 \dif t_2\int^{t_2-1}_0 e^{-|s_1-t_1|}  (t_2-t_1)^{2H-2} \dif t_1\int^{(s_1+1)\wedge T}_{(s_1-1)\vee 0}\frac{\partial_{s_1}}{\partial s_2}e^{-|s_2-t_2|}\notag\\
&\times \mathrm{sgn}(s_1-s_2)|s_1-s_2|^{2H-1} \dif s_2:=H\times [U(T)-\tilde{U}(T)],
	\end{align}	here
\begin{align}
U(T)&=\int^T_0\dif s_1\int^T_1\dif t_2\int^{t_2-1}_0 e^{-|t_1-s_1|}(t_2-t_1)^{2H-2} \dif t_1\int^{(s_1+1)\wedge T}_{(s_1-1)\vee 0} \sgn(s_2-t_2) e^{-|t_2-s_2|} \notag\\
&\times  \sgn(s_1-s_2)|s_1-s_2|^{2H-1}\dif s_2 ;\label{UT expression}\\
\tilde{U}(T)&= \int^T_0\dif s_1\int^T_1 \dif t_2\int^{t_2-1}_0 e^{-|t_1-s_1|} (t_2-t_1)^{2H-2} \dif t_1\int^{(s_1+1)\wedge T}_{(s_1-1)\vee 0} e^{-|t_2-s_2|}\notag\\
&\times \sgn(s_1-s_2)|s_1-s_2|^{2H-1} \big( \delta_{(s_1-1)\vee 0}(s_2)-\delta_{(s_1+1)\wedge T}(s_2)\big)\dif s_2. \label{bar UT expression}
\end{align}
According to Lemma \ref{2.2}, Lemma \ref{m31.2.3} and Lemma \ref{UT BarUT jianjx yinli}, we get that the asymptote of $M_{31}(T)+M_{32}(T)$ is:
	\begin{align}
2 H T&\times \Bigg[\int_1^{\infty} e^{-u}u^{2H-1}\dif u \Big[ -2H(e^{-1}+e) +(4H-1) \int_0^1 (e^{x}+e^{-x})x^{2H-1}\dif x\Big]\notag\\ &+e^{-1}\int_0^1 (e^{x}-e^{-x})x^{2H-1}\dif x -(1+e^{-2}) \Bigg].
\label{M31+M32 jianjinxian}	\end{align}

	\textbf{Step~4}. Solve the asymptote of function $M_{33}(T)$.
Similar to the method for dealing with items $M_{31}(T)$ and $M_{32}(T)$ in step~3, we expand $\frac{\partial R_H}{\partial s_1}(s_1,s_2)$ and $\frac{\partial R_H}{\partial t_1}(t_1,t_2)$ in turn, and use Inference \ref{coro-3-chen1} twice consecutively to obtain:
	\begin{align}
	   M_{33}(T)&= \int_{\kappa_3\times \kappa_3}e^{-|s_1-t_1|}\frac{\partial R_H}{\partial s_1}(s_1,s_2)\frac{\partial R_H}{\partial t_1}(t_1,t_2)\frac{\partial_{s_1}\partial_{t_1}}{\partial s_2\partial t_2}e^{-|s_2-t_2|} \dif \vec{s}\dif \vec{t}\notag\\
	   &=H\int_0^T \dif s_1\int_0^T e^{-\abs{s_1-t_1}}\dif t_1\int^{(t_1+1)\wedge T}_{(t_1-1)\vee0} \frac{\partial R_H}{\partial t_1}(t_1,t_2) \dif t_2 \notag\\
	   &\times\int^{(s_1+1)\wedge T}_{(s_1-1)\vee0} \big(s_1^{2H-1}-\abs{s_1-s_2}^{2H-1}\sgn{(s_1-s_2)}\big)
	   \frac{\partial_{s_1}\partial_{t_1}}{\partial s_2\partial t_2}e^{-|s_2-t_2|}\dif s_2\notag\\
	   &=-H^2\int_0^T \dif s_1\int_0^T e^{-\abs{s_1-t_1}}\dif t_1\int^{(s_1+1)\wedge T}_{(s_1-1)\vee0}  \abs{s_1-s_2}^{2H-1}\sgn{(s_1-s_2)} 
	   \dif s_2\notag\\
	   &\times\int^{(t_1+1)\wedge T}_{(t_1-1)\vee0}  \big(t_1^{2H-1}-\abs{t_1-t_2}^{2H-1}\sgn{(t_1-t_2)}\big)\frac{\partial_{s_1}\partial_{t_1}}{\partial s_2\partial t_2}e^{-|s_2-t_2|} \dif t_2 \notag\\
	   &=H^2\int_0^T \dif s_1\int_0^T e^{-\abs{s_1-t_1}}\dif t_1\int^{(s_1+1)\wedge T}_{(s_1-1)\vee0}  \abs{s_1-s_2}^{2H-1}\sgn{(s_1-s_2)} 
	   \dif s_2\notag\\
	   &\times\int^{(t_1+1)\wedge T}_{(t_1-1)\vee0}   \abs{t_1-t_2}^{2H-1}\sgn{(t_1-t_2)} \frac{\partial_{s_1}\partial_{t_1}}{\partial s_2\partial t_2}e^{-|s_2-t_2|} \dif t_2. \label{mss33biaodashi}
	\end{align}
	Note that the bivariate joint ``density function'' in the above equation can be expressed as:
	\begin{align*}
	   \frac{\partial_{s_1}\partial_{t_1}}{\partial s_2\partial t_2}e^{-|s_2-t_2|} &=e^{-\abs{s_2-t_2}}\times \Big(-1-\mathrm{sgn}(t_2-s_2)[\delta_{(s_1-1)\vee0}(s_2)-\delta_{(s_1+1)\wedge T}(s_2)]\\
			&-\mathrm{sgn}(s_2-t_2)[\delta_{(t_1-1)\vee1}(t_2)-\delta_{(t_1+1)\wedge T}(t_2)]\\
			&+\big[(\delta_{(s_1-1)\vee0}-\delta_{(s_1+1)\wedge T})(s_2)(\delta_{(t_1-1)\vee0}-\delta_{(t_1+1)\wedge T})(t_2)\big]\Big) 
	\end{align*}
Substitute the above joint density function into formula \eqref{mss33biaodashi} to obtain:
\begin{align}\label{mss 33fenjie}
   M_{33}(T)&=H^2\times[{ -} L(T) +2 P(T)+Q(T)] ,
\end{align} where
\begin{align}
  	L(T)&=\int_{[0,T]^2}e^{-|t_1-s_1|} \dif s_1 \dif t_1\int^{(s_1+1)\wedge T}_{(s_1-1)\vee0}\dif s_2\int^{(t_1+1)\wedge T}_{(t_1-1)\vee0}\sgn(s_1-s_2)|s_1-s_2|^{2H-1}\notag\\
				&\times\sgn(t_1-t_2)|t_1-t_2|^{2H-1}e^{-|t_2-s_2|}\dif t_2,\label{LT biaodashi}\\
P(T)&=	\int_{[0,T]^2} e^{-|t_1-s_1|}\dif s_1 \dif t_1\int^{(s_1+1)\wedge T}_{(s_1-1)\vee0}\dif s_2\int^{(t_1+1)\wedge T}_{(t_1-1)\vee0}e^{-|t_2-s_2|}\sgn(s_1-s_2)|s_1-s_2|^{2H-1}\notag\\
			&\times\sgn(t_1-t_2)|t_1-t_2|^{2H-1} \sgn(s_2-t_2) [\delta_{(s_1-1)\vee0}(s_2)-\delta_{(s_1+1)\wedge T}(s_2)]\,\dif t_2,\label{PT biaodashi}\\
Q(T)&= \int_{[0,T]^2} e^{-|t_1-s_1|}\dif s_1 \dif t_1\int^{(s_1+1)\wedge T}_{(s_1-1)\vee0}\dif s_2\int^{(t_1+1)\wedge T}_{(t_1-1)\vee0}e^{-|t_2-s_2|} \sgn(s_1-s_2)|s_1-s_2|^{2H-1}\notag\\
				&\times\sgn(t_1-t_2)|t_1-t_2|^{2H-1}\big[(\delta_{(s_1-1)\vee0}-\delta_{(s_1+1)\wedge T})(s_2)(\delta_{(t_1-1)\vee0}-\delta_{(t_1+1)\wedge T})(t_2)\big]\dif t_2.\notag
\end{align}
For term $Q(T)$, first integrate Dirac function, then convert it into four double integrals, when $T\to\infty$, we can directly calculate that its asymptote is $${(6e^{-2}+2)}T+C_H.$$
Lemma \ref{yyuhjkl} and Lemma \ref{yyuhjkl02} respectively give the asymptotes of the terms $L(T)$ and $P(T)$ when $T\to\infty$, and combine the three asymptotes. According to formula \eqref{mss 33fenjie}, the asymptote of $M_{33}(T)$ is:
\begin{align}
 2H^2T&\times\Bigg[{-2(4H+1)\int_0^1e^{-u}u^{2H-1}\dif u\int_0^u e^v v^{2H-1}\dif v+(4H+1)\big(\int_0^1e^{-u}u^{2H-1}\dif u \big)^2 }\notag\\
 &{+4H\int_0^{1} \big( e^{-1-u}-e^{-1+u}\big)u^{2H-1}\dif u + e^{-2}+3}\Bigg]+C_H. \label{M33 jianjinxian}
\end{align}

Finally, the above three steps give the asymptotes of $M_{11}(T)+M_{12}(T)$, $M_{31}(T)+M_{32}(T)$, and $M_{33}(T)$ as functions of $T$ when $T\to \infty$, and obtain \eqref{M11+M12 jianjinxian}, \eqref{M31+M32 jianjinxian} and \eqref{M33 jianjinxian} respectively. Then it is known from the decomposition formula \eqref{fT norm2 fenjieshizi chufadian} that the norm of the binary function $f_T(t,s)$ is taken as the function of $T$, and the asymptote exists when $T\to \infty$. Finally, from the uniqueness of \eqref{zhou jielun} and function limit, it is concluded that \eqref{jianjinxian mubiao} holds.{\hfill\large{$\Box$}}
\begin{remark}
A by-product of the proof of Theorem \ref{qq1} is the following seemingly tedious analytical identities:
\begin{align*}
    2(H\Gamma(2H))^2\Big[4H-1 + \frac{2\Gamma(2-4H)\Gamma(4H)}{\Gamma(2H)\Gamma(1-2H)}\Big] =A_3+2\alpha_H(\alpha_H A_1-   A_2),
\end{align*} 
Here $A_1,A_2,A_3$ are the slope values of asymptotes \eqref{M11+M12 jianjinxian}, \eqref{M31+M32 jianjinxian} and \eqref{M33 jianjinxian} respectively. It should be emphasized that the true meaning of Theorem \ref{qq1} lies in ``The norm of the bivariate function $f_T(t,s)$ is taken as a function of $T$. When $T\to \infty$, the existence of asymptote.'' As for the specific value of the slope of the asymptote, it is not so important. Therefore, in order to save space, this paper will not verify this analytic identity.
\end{remark}

{\bf Proof of Theorem~\ref{zhuyaodl
2}}: First, we have to prove the \textnormal{Berry--Ess\'{e}en} type inequality \eqref{zuixiaoerch B_E } of the least squares estimator.
Therefore, From the proof of \cite{chenkuangandli2019} Theorem 1.1, we know that
\begin{align}
     & \sup_{z\in \Rnum}\abs{P(\sqrt{\frac{T}{\theta \sigma^2_H}} (\hat{\theta}_T-\theta )\le z)-P(Z\le z)}\notag\\
     &\le  C_{\theta, H}\times \max\set{\frac{1}{\sqrt{ T} },\, \abs{\frac{1}{T}\|f_T\|^2_{{\Hb}^{\otimes2}}-2(H\Gamma(2H))^2\sigma_H^2} }. \label{BE jie guji 1}
\end{align} Therefore, according to Theorem \ref{qq1}, we can know that inequality \eqref{zuixiaoerch B_E } is true.

Then we prove \textnormal{Berry--Ess\'{e}en} inequality of moment estimator \eqref{juguji de B_E }. According to the proof of Theorem 1.1 in \cite{Chen guli 22},
\begin{align}
     & \sup_{z\in \Rnum}\abs{P(\sqrt{\frac{T}{\theta \sigma^2_H}} (\tilde{\theta}_T-\theta )\le z)-P(Z\le z)}\notag\\
     &\le  C_{\theta, H}\times \max\set{\frac{1}{\sqrt{ T} },\, \abs{\frac{1}{T}\|f_T\|^2_{{\Hb}^{\otimes2}}-2(H\Gamma(2H))^2\sigma_H^2},\,\sqrt{\frac{\abs{\innp{f_T,\,h_T}_{{\Hb}^{\otimes2}}}}{T}  } }.\label{BE jie guji 2}
\end{align} Therefore, the inequality \eqref{juguji de B_E } is established by Theorem \ref{qq1} and Proposition. \ref{jiaochaxiang}
{\hfill\large{$\Box$}}

\section {Appendix: Asymptotic analysis}\label{jianjingfenxi}
	The Lemma \ref{e^uu^a} is a trivial inequality which is used several times in this section. 
\begin{lemma}\label{e^uu^a}
Let $\alpha<0$. There is a positive number $C_{\alpha}$ which depends only on $\alpha$ such that for any $x>1$, we have 
	\begin{align}
		\int^x_1e^uu^{\alpha}\dif u<C_{\alpha} \times e^x\label{e^uu^a1}.
	\end{align}
\end{lemma}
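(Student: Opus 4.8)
The plan is to prove the elementary estimate $\int_1^x e^u u^\alpha\,\dif u < C_\alpha e^x$ for $\alpha<0$ and $x>1$ by a single integration by parts that extracts the dominant boundary term $e^x x^\alpha$ and controls the remainder. First I would write
\begin{align*}
\int_1^x e^u u^\alpha\,\dif u = \big[e^u u^\alpha\big]_1^x - \alpha\int_1^x e^u u^{\alpha-1}\,\dif u = e^x x^\alpha - e\cdot 1^\alpha - \alpha\int_1^x e^u u^{\alpha-1}\,\dif u.
\end{align*}
Since $\alpha<0$, on the interval $[1,x]$ we have $u^{\alpha-1}\le u^\alpha$ (because $u\ge 1$ and the exponent is smaller), so the remainder integral is dominated by the original integral; together with the fact that $-\alpha>0$ this gives a self-referential inequality.

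More precisely, I would set $I(x)=\int_1^x e^u u^\alpha\,\dif u$ and observe from the display above that
\begin{align*}
I(x) \le e^x x^\alpha + (-\alpha)\int_1^x e^u u^{\alpha-1}\,\dif u \le e^x x^\alpha + (-\alpha) I(x),
\end{align*}
where I dropped the negative term $-e$ and used $u^{\alpha-1}\le u^\alpha$. If $-\alpha<1$, i.e. $\alpha\in(-1,0)$, this rearranges immediately to $I(x)\le \frac{1}{1+\alpha}e^x x^\alpha \le \frac{1}{1+\alpha}e^x$ since $x^\alpha\le 1$ for $x\ge 1$, giving $C_\alpha=\frac{1}{1+\alpha}$. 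For the full range $\alpha<0$ one cannot assume $-\alpha<1$, so the cleaner route is to avoid the self-referential step altogether: simply use $u^\alpha\le 1$ on $[1,x]$ directly, yielding $I(x)\le \int_1^x e^u\,\dif u = e^x - e < e^x$, so in fact $C_\alpha=1$ works uniformly. This crude bound suffices for the stated inequality, and the integration-by-parts refinement is only needed if one wants the sharper constant with the $x^\alpha$ factor.

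The only mild subtlety, and the step I would handle with slight care, is the behaviour of the integrand near the lower endpoint $u=1$ versus the monotonicity claim $u^\alpha\le 1$: since the integration starts at $u=1$ rather than at $0$, the factor $u^\alpha$ is bounded by $1$ throughout $[1,x]$ even for very negative $\alpha$, so no integrability problem arises and the bound $u^\alpha\le 1$ is valid on the whole domain. Thus the main obstacle is essentially nonexistent; the lemma is genuinely trivial as the authors note, and I would present the one-line majorization $\int_1^x e^u u^\alpha\,\dif u\le \int_1^x e^u\,\dif u\le e^x$ as the proof, optionally remarking that integration by parts yields the sharper form $e^x x^\alpha/(1+\alpha)$ when $\alpha\in(-1,0)$, which is the regime actually relevant to $\alpha=2H-2$ with $H\in(\tfrac14,\tfrac12)$.
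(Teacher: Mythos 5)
Your main argument is correct and matches the paper's treatment: the paper states this lemma without any proof, calling it ``a trivial inequality,'' and your one-line majorization $u^\alpha\le 1$ on $[1,x]$ for $\alpha<0$, giving
\begin{align*}
\int_1^x e^u u^\alpha\,\dif u\le \int_1^x e^u\,\dif u = e^x-e<e^x,
\end{align*}
so that $C_\alpha=1$ works uniformly, is precisely the triviality the authors have in mind. One correction to your closing remark: for $H\in(\tfrac14,\tfrac12)$ the exponent $\alpha=2H-2$ lies in $(-\tfrac32,-1)$, not in $(-1,0)$ (it is $\alpha=2H-1$ that falls in $(-1,0)$), so your integration-by-parts refinement with constant $\tfrac{1}{1+\alpha}$ does not apply to that exponent --- which is harmless here, since your majorization covers all $\alpha<0$.
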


\begin{lemma}\label{M11}
Let \eqref{M11 biaodashi} be the expression of the quadruple integral $M_{11}(T)$. When $T\to\infty$, The asymptote of $M_{11}(T)$ is given by 
\begin{align}
2T\times \Big[(4H-1)\int^\infty_1e^{-v}v^{2H-2}\dif v\int^v_1e^u u^{2H-2}\dif u+\int^{\infty}_1e^{1-u}u^{2H-2}\dif u \big] +C_H.\label{M!!T jianjinxian}
	\end{align}
\end{lemma}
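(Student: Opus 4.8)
The plan is to extract the linear-in-$T$ term from the single ``free'' outer integration, with all the inner integrations remaining bounded uniformly in $T$. Starting from the expression \eqref{M11 biaodashi}, I would substitute $u=s_1-s_2$, $v=t_1-t_2$, $w=s_1-t_1$. This converts the two power kernels into $u^{2H-2}$ and $v^{2H-2}$, and since $e^{-s_1}e^{t_1}=e^{-w}$ and $t_2-s_2=u-v-w$, it rewrites
\begin{align*}
M_{11}(T)=2\int_1^T \dif s_1\int_0^{s_1-1}e^{-w}\dif w\int_1^{s_1}u^{2H-2}\dif u\int_1^{s_1-w}v^{2H-2}e^{-\abs{u-v-w}}\dif v .
\end{align*}
Writing $g(s_1)$ for the triple inner integral, the outermost $\dif s_1$ carries no exponential damping, so $M_{11}(T)=2\int_1^T g(s_1)\,\dif s_1$, and the whole problem reduces to the behaviour of $g(s_1)$ as $s_1\to\infty$.

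Next I would identify the slope. Because $u^{2H-2},v^{2H-2}$ are integrable on $[1,\infty)$ and $e^{-\abs{u-v-w}}\le1$, dominated convergence gives $g(s_1)\to G$, where
\begin{align*}
G=\int_0^{\infty}e^{-w}\dif w\int_1^{\infty}\int_1^{\infty}u^{2H-2}v^{2H-2}e^{-\abs{u-v-w}}\dif u\,\dif v .
\end{align*}
I would then compute the elementary inner $w$-integral $\int_0^\infty e^{-w}e^{-\abs{u-v-w}}\,\dif w$ by splitting at $w=u-v$, apply Fubini, and reduce the resulting double integral to the stated closed form by a single integration by parts: differentiating $v^{2H-1}$ produces the factor $(2H-1)$, hence the coefficient $(4H-1)$ in \eqref{M!!T jianjinxian}, while the boundary term yields the summand $\int_1^\infty e^{1-u}u^{2H-2}\,\dif u$. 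As the paper already remarks for the main theorem, only the \emph{existence} of the slope matters and its exact value is inessential, so this computation, though tedious, is routine.

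The hard part will be the third step: showing that $M_{11}(T)-2GT$ stays bounded, i.e. that $\int_1^\infty\abs{g(s_1)-G}\,\dif s_1<\infty$. The difference arises from truncating the inner integrals at $u=s_1$, $v=s_1-w$ (and $w=s_1-1$); the $w$-truncation costs only $O(e^{-s_1})$ and is harmless. The delicate point is that a crude estimate of the $u$-tail, $\int_{s_1}^\infty u^{2H-2}\,\dif u=O(s_1^{2H-1})$, is \emph{not} integrable in $s_1$ for $H>0$, so such a bound cannot close the argument. The remedy is to exploit that $e^{-\abs{u-v-w}}$ forces $u-v-w=O(1)$: on the truncated region one of $u,v$ exceeds $\sim s_1$, and the exponential then forces the other to be $\sim s_1$ as well, so \emph{both} kernels are simultaneously of size $s_1^{2H-2}$.

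Carrying this out region by region (the dominant contribution being $u>s_1$, $v>s_1-w$, where the exponential peak $v=u-w$ lies inside the range) yields $\abs{g(s_1)-G}=O(s_1^{4H-3})$. This is integrable on $[1,\infty)$ precisely because $4H-3<-1$, that is $H<\tfrac12$, which is exactly the standing hypothesis; this also explains why the bound degenerates as $H\uparrow\tfrac12$. Combining the three steps gives $M_{11}(T)=2GT+C_H+o(1)$, which is the asymptote \eqref{M!!T jianjinxian}.
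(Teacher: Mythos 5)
Your proposal is correct, and although it shares the paper's skeleton --- pass to difference variables and pull the linear growth out of the single outer $\dif s_1$ integration --- the execution is genuinely different. The paper first splits $\tfrac12 M_{11}(T)$ by the sign of $s_2-t_2$ into two pieces $J_1(T)+J_2(T)$ and, in each piece, chooses variables (e.g.\ $x=s_1-t_1+v$) that resolve the absolute value, so that each piece becomes a fully iterated integral whose inner part converges by one-dimensional estimates; the slope then appears directly as the sum of two limits, which is precisely your $G$ written as $I_1+I_2$. You instead keep $e^{-|u-v-w|}$ intact, write $M_{11}(T)=2\int_1^T g(s_1)\,\dif s_1$ with a single limit $g\uparrow G$, and move all the difficulty into the tail bound $G-g(s_1)=O(s_1^{4H-3})$; your localization argument for that bound is sound (the doubly truncated region containing the peak $v=u-w$ gives $O(s_1^{4H-3})$, the singly truncated regions give $O(s_1^{4H-4})$, and the $w$-truncation is exponentially small). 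Your route buys two things: it avoids the case analysis, and it makes explicit exactly where $H<\tfrac12$ enters --- a point the paper hides behind its citation of Lemma \ref{e^uu^a}, whose stated bound $C_\alpha e^x$ is by itself too crude to yield an integrable tail, so that both routes in fact need the sharper estimate $\int_1^x e^u u^{2H-2}\,\dif u\le C e^x x^{2H-2}$. What the paper's splitting buys is that the slope emerges with no further processing, whereas you must still reduce $G$ to the stated closed form; your sketch of that step is also correct: after the $w$-integration and symmetrization, $G=\iint_{u>v\ge 1}u^{2H-2}v^{2H-2}(u-v+1)e^{-(u-v)}\,\dif u\,\dif v$, and writing $(u-v)u^{2H-2}v^{2H-2}=u^{2H-1}v^{2H-2}-u^{2H-2}v^{2H-1}$ and integrating by parts once in each term produces $(4H-2)$ times the first summand of \eqref{M!!T jianjinxian} plus the boundary term $\int_1^\infty e^{1-u}u^{2H-2}\,\dif u$, confirming the coefficient $4H-1$.
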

\begin{proof}
We take the integral variable $(s_1,\,t_1)$ of the quadruple integral $\frac{1}{2} M_{11}(T)$. We first decompose the region $[0,s_1-1]\times[0,t_1-1]$ of the integral variable $(s_2,t_2)$ into $\set{0 \le t_2\le t_1-1,\, t_2\le s_2\le s_1-1]}\cup\set{0\le s_2\le t_2\le t_1-1}$.  The integral $\frac{1}{2} M_{11}(T)$ restricted to the corresponding subregion is called $J_1(T),\,J_2(T)$ respectively, where

	\begin{align*}
	J_1(T)&=\int^T_1\dif s_1\int^{s_1}_1\dif t_1\int^{t_1-1}_0\dif t_2\int^{s_1-1}_{t_2}\dif s_2  (s_1-s_2)^{2H-2}(t_1-t_2)^{2H-2}e^{t_1-s_1-|t_2-s_2|},\\
J_2(T)&=\int^T_1\dif s_1\int^{s_1}_1\dif t_1 \int^{t_1-1}_0\dif t_2\int^{t_2}_0\dif s_2 (s_1-s_2)^{2H-2}(t_1-t_2)^{2H-2}e^{t_1-s_1-|t_2-s_2|}.
	\end{align*}
	
Then we try to obtain the asymptote of the quadruple integral $J_1(T)$. Making the change of variables  $u=s_1-s_2,\, v=t_1-t_2,\, x=s_1-t_1+v$. By the symmetry, we have
	\begin{align*}
		J_1(T)=&\int^T_1\dif s_1\int^{s_1}_1e^{-2x}\dif x\int^{x}_1e^{v}v^{2H-2}\dif v\int^{x}_1e^{u}u^{2H-2}\dif u\\
		=&2\int^T_1\dif s_1\int^{s_1}_1e^{-2x}\dif x\int^{x}_1e^{v}v^{2H-2}\dif v\int^{v}_1e^{u}u^{2H-2}\dif u.
	\end{align*}
Lemma \ref{e^uu^a} and Partial integration formulate indicate that when $T\to\infty$, the asymptote of $J_1(T)$ is
	\begin{align}
	T\times \int^\infty_1e^{-v}v^{2H-2}\dif v\int^v_1e^u u^{2H-2}\dif u+C_H.	 \label{J1T jianjx}
	\end{align}
Then we take the asymptote of $J_2(T)$. Making the change of variables $u=s_1-s_2,\,v=t_1-t_2,\, x=s_1-t_1+v $, we have
		\begin{align*}
			J_2(T)=\int^T_1\dif s_1\int^{s_1}_1e^{-u}u^{2H-2}\dif u\int^{u}_1\dif x\int^{x}_{1}e^{v}v^{2H-2}\dif v
		\end{align*}
Lemma $\ref{e^uu^a}$ and Partial integration formulate imply that when $T\to\infty$, the  asymptote of $J_2(T)$ is 
	\begin{align}
	&T\times \int^{\infty}_1e^{-u}u^{2H-2}\dif u\int^{u}_1\dif x\int^{x}_1e^{v}v^{2H-2}\dif v+C_H\notag\\
	&=T\times \int^{\infty}_1e^{-u}u^{2H-2}\dif u\int^{u}_1 e^{v}v^{2H-2}(u-v)\dif v+C_H.	\label{J2T jianjx}	 
	\end{align}
Finally, the  asymptote of $\frac12M_{11}(T)$ can be obtained by combining \eqref{J1T jianjx} and \eqref{J2T jianjx}, and we obtain the  asymptote \eqref{M!!T jianjinxian} of $M_{11}(T)$.

			\end{proof}

\begin{lemma}\label{yyijh}
Let \eqref{M12T biaodashi} be the expression of the quadruple integral $M_{12}(T)$, then when $T\to\infty$, the asymptote of $M_{12}(T)$ is 
				\begin{align}
 T \Bigg[ (4H-1)\big(\int_1^{\infty} e^{-u} u^{2H-2}\dif u \big)^2 + 2\int_1^{\infty} e^{-1-u} u^{2H-2}\dif u  \Bigg]+C_H. \label{BarJ jianjinxian}
				\end{align}
\end{lemma}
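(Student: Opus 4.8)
The plan is to reduce the quadruple integral \eqref{M12T biaodashi} to a single free integration that produces the factor $T$, multiplied by a triple integral over bounded relative coordinates that supplies the slope. The key observation is that, after passing to suitable difference variables, the integrand becomes independent of the "center" variable. Concretely, I would perform the affine change of variables
\[
a=s_1-t_1,\qquad u=s_1-s_2,\qquad w=t_2-t_1,
\]
keeping $s_1$ as the fourth coordinate; this is unimodular, so the Jacobian is $1$. From $s_2\in[0,s_1-1]$ and $t_2\in[t_1+1,T]$ we read off $u\ge1$ and $w\ge1$, while $|t_1-s_1|=|a|$ and, since $t_2-s_2=w+u-a$, also $|t_2-s_2|=|u+w-a|$. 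Thus the integrand collapses to $u^{2H-2}w^{2H-2}e^{-|a|-|u+w-a|}$, which does not depend on $s_1$.

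Consequently, integrating over $s_1$ merely returns the length $\ell_T(a,u,w)$ of the admissible $s_1$-interval cut out by the constraints $1\le s_1\le T$, $0\le s_1-a\le T-1$, and $s_1\le T+a-w$. On the support of the integrand this length equals $T$ minus a quantity bounded by a constant multiple of $1+|a|+u+w$, and the indicator that the interval is nonempty tends to $1$ pointwise as $T\to\infty$. Writing $\ell_T=T-r_T$ with $r_T$ so bounded, and using that $2H-2<-1$ together with the exponential factors (via Lemma \ref{e^uu^a}) to guarantee integrability of the weight $(1+|a|+u+w)\,u^{2H-2}w^{2H-2}e^{-|a|-|u+w-a|}$, dominated convergence gives $M_{12}(T)=T\,S_H+C_H+o(1)$, where
\[
S_H:=\int_{\R}\int_1^{\infty}\int_1^{\infty}u^{2H-2}w^{2H-2}e^{-|a|-|u+w-a|}\,\dif u\,\dif w\,\dif a .
\]

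It then remains to evaluate $S_H$. First I would carry out the inner $a$-integral: setting $b:=u+w\ge2$ and splitting $\R$ at $0$ and $b$ gives $\int_{\R}e^{-|a|-|b-a|}\,\dif a=(b+1)e^{-b}$. Substituting $b=u+w$ and expanding $(u+w+1)e^{-(u+w)}$ factorizes the $(u,w)$-integral into products of $\int_1^{\infty}u^{2H-2}e^{-u}\,\dif u$ and $\int_1^{\infty}u^{2H-1}e^{-u}\,\dif u$, namely
\[
S_H=2\Big(\int_1^{\infty}u^{2H-1}e^{-u}\dif u\Big)\Big(\int_1^{\infty}u^{2H-2}e^{-u}\dif u\Big)+\Big(\int_1^{\infty}u^{2H-2}e^{-u}\dif u\Big)^2 .
\]
A single integration by parts, $\int_1^{\infty}u^{2H-1}e^{-u}\,\dif u=e^{-1}+(2H-1)\int_1^{\infty}u^{2H-2}e^{-u}\,\dif u$, converts this into $(4H-1)\big(\int_1^{\infty}e^{-u}u^{2H-2}\dif u\big)^2+2\int_1^{\infty}e^{-1-u}u^{2H-2}\dif u$, which is exactly the slope appearing in \eqref{BarJ jianjinxian}.

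The main obstacle is the bookkeeping in the second step: showing that replacing the exact interval length $\ell_T(a,u,w)$ by $T$ contributes only an $O(1)$ error (absorbed into $C_H$) and does not perturb the slope, uniformly as $T\to\infty$. This requires carefully tracking how the several boundary constraints truncate the $s_1$-range across the different sign regions of $a$ and of $a-w$, and then verifying the integrability estimate quoted above; this is precisely the analog of the $+C_H$ accounting already used in the proof of Lemma \ref{M11}, and it is where the hypothesis $H<\tfrac12$ (hence $2H-2<-1$) together with Lemma \ref{e^uu^a} is essential.
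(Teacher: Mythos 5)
Your proposal is correct, and it reaches the slope in \eqref{BarJ jianjinxian} by a genuinely different route from the paper. The paper's proof is piecewise: it splits the $(s_1,t_1)$-domain of \eqref{M12T biaodashi} into $\{s_1-1\le t_1\}$ and $\{t_1\le s_1-1\}$ (the integrals $\bar{J}_1$, $\bar{J}_2$), splits $\bar{J}_2$ further into five sub-integrals $\bar{J}_{211},\bar{J}_{212},\bar{J}_{221},\bar{J}_{222},\bar{J}_{223}$ according to the position of $t_2$ and the relative order of $t_1,s_2$, performs a tailored change of variables in each piece, and extracts each piece's linear growth by one-dimensional integration by parts, finally summing the partial slopes. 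Your substitution $a=s_1-t_1$, $u=s_1-s_2$, $w=t_2-t_1$ instead exploits the translation invariance of the integrand all at once: the integrand $u^{2H-2}w^{2H-2}e^{-|a|-|u+w-a|}$ is independent of $s_1$, integrating out $s_1$ yields the exact length $\ell_T=T+\min(0,a-w)-\max(1,a,u)$ (your bound on $r_T:=T-\ell_T$ is correct, since $w\ge1$ forces $\min(0,a-1,a-w)=\min(0,a-w)$, whence $r_T\le 1+2|a|+u+w$, independent of $T$), and the slope becomes the single convergent integral $S_H$. I verified your closed-form evaluation: $\int_{\R}e^{-|a|-|b-a|}\,\dif a=(b+1)e^{-b}$ for $b=u+w$, the resulting factorization $S_H=2JI+I^2$ with $I=\int_1^\infty e^{-u}u^{2H-2}\dif u$, $J=\int_1^\infty e^{-u}u^{2H-1}\dif u$, and the integration by parts $J=e^{-1}+(2H-1)I$ give exactly $(4H-1)I^2+2\int_1^\infty e^{-1-u}u^{2H-2}\dif u$, matching \eqref{BarJ jianjinxian}; your error accounting (that $T\int_{\{\ell_T\le 0\}}$ of the integrand vanishes and $\int r_T\cdot$ integrand stays bounded, both by dominated convergence) is sound. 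What each approach buys: yours is shorter, replaces five separate asymptote extractions by a single error analysis, and produces the slope as one closed-form integral; the paper's scheme is heavier but runs on the same template (region splitting, change of variables, integration by parts, Lemma \ref{e^uu^a}) used uniformly for $M_{11}(T)$, $N(T)$, $\tilde N(T)$, $L(T)$, $P(T)$ throughout the appendix. One cosmetic correction: the integrability of your dominating weight needs neither $2H-2<-1$ nor Lemma \ref{e^uu^a} (that lemma controls \emph{growing} exponentials); since $u,w\ge1$ there is no singularity, and after the $a$-integration the weight is $O\big((1+u+w)^2e^{-(u+w)}\big)$, which is integrable against $u^{2H-2}w^{2H-2}$ on $[1,\infty)^2$ for every $H\in(0,\tfrac12)$.
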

\begin{proof}
We first decompose the integral region $[1,T]\times[0,T-1]$ of the integral variable $(s_1,\,t_1)$ of the quadruple integral $M_{12}(T)$ into $\set{0\le s_1-1\le t_1\le T-1}\cup\set{0\le t_1\le s_1-1\le T-1}$. The integral $M_{12}(T)$ restricted to corresponding subregion is $\bar{J}_1(T)$ and $\bar{J}_2(T)$ respectively, where
	\begin{align*}
				\bar{J}_1(T)=& \int^T_1\dif s_1\int_{s_1-1}^{T-1}\dif t_1 \int_{0}^{s_1-1}\dif s_2\int^T_{t_1+1} (s_1-s_2)^{2H-2}(t_2-t_1)^{2H-2}e^{-|t_1-s_1|-|t_2-s_2|}\dif t_2,\\
				\bar{J}_2(T)=& \int_1^T\dif s_1\int_0^{s_1-1}\dif t_1 \int_{0}^{s_1-1}\dif s_2\int^T_{t_1+1}(s_1-s_2)^{2H-2}(t_2-t_1)^{2H-2}e^{-|t_1-s_1|-|t_2-s_2|}\dif t_2.
\end{align*}
Then we try to obtain the asymptote of $\bar{J}_1(T)$ when $T\to\infty$. Making the change of variables $u=t_2-t_1,\,v=u+s_1-1-s_2$ and $x=t_2-s_2$	, we have		
				\begin{align*}
					\bar{J}_1(T)=\int^T_1\dif t_2\int^{t_2}_1e^{-x}\dif x\int_1^{x}e^{-|x-v-1|}\dif v\int^v_1  (v-u+1)^{2H-2}u^{2H-2}\dif u
				\end{align*}
By the Partial integration formulate and Fubini Theorem, when $T\to\infty$, the asymptote is as follow 
				\begin{align}
 &T\times \int^{\infty}_1e^{-x}\dif x\int_1^{x}e^{-|x-v-1|}\dif v\int^v_1 (v-u+1)^{2H-2}u^{2H-2}\dif u +C_H\notag\\
 &=T\times \Bigg[\int^{\infty}_1 u^{2H-2}\dif u \int_u^{\infty} e^v(v-u+1)^{2H-2}  \dif v \int^{\infty}_{1+v}  e^{1-2x}\dif x\notag\\
 &+\int^{\infty}_1 u^{2H-2}\dif u \int_u^{\infty} e^{-v-1}(v-u+1)^{2H-2}  \dif v \int_{v}^{1+v} \dif x \Bigg]+ C_H\notag\\
 &=\frac{3}{2}T\times \Big(\int_1^{\infty} e^{-u} u^{2H-2}\dif u \Big)^2+ C_H. \label{barJ1T  jianjx}
				\end{align}
We next obtain the asymptote of the integral 	$\bar{J}_2(T)$ when $T\to\infty$. Fix the integral variable $(s_1,t_2)$ of $\bar{J}_2(T)$. We decompose the integral region $[0,s_1-1]^2 $ of integral variable 	$(t_1,s_2)$	into $\set{0\le s_2\le t_1\le s_1-1}\cup\set{0\le t_1\le s_2\le s_1-1}$. The integral $\bar{J}_2(T)$ restricted to corresponding subregion is $\bar{J}_{21}(T)$ and $\bar{J}_{22}(T)$ respectively, where	
	\begin{align*}
			\bar{J}_{21}(T)=&\int^T_1\dif s_1\int_0^{s_1-1}\dif t_1\int_0^{t_1}\dif s_2 \int^T_{t_1+1}(s_1-s_2)^{2H-2}(t_2-t_1)^{2H-2}e^{-|t_1-s_1|-|t_2-s_2|}\dif t_2,\\
			\bar{J}_{22}(T)=&\int^T_1\dif s_1 \int_0^{s_1-1}\dif s_2\int^{s_2}_0\dif t_1\int^T_{t_1+1} (s_1-s_2)^{2H-2}(t_2-t_1)^{2H-2}e^{-|t_1-s_1|-|t_2-s_2|}\dif t_2.
	\end{align*}
Fix the integral variable $(s_1,t_1, s_2)$ of $\bar{J}_{21}(T)$ again. We decompose the integral region $[t_1+1,\,T]$ of integral variable $t_2$	into $[t_1+1,\,s_1]\cup[s_1,\, T]$. The integral $\bar{J}_{21}(T)$ restricted to corresponding subregion is $\bar{J}_{211}(T)$ and $\bar{J}_{212}(T)$ respectively, where
				\begin{align*}
					\bar{J}_{211}(T)=&\int^T_1\dif s_1\int_0^{s_1-1}\dif t_1\int_0^{t_1}\dif s_2\int^{s_1}_{t_1+1}(s_1-s_2)^{2H-2}(t_2-t_1)^{2H-2}e^{-|t_1-s_1|-|t_2-s_2|}\dif t_2,\\
					\bar{J}_{212}(T)=&\int^T_1\dif s_1\int_0^{s_1-1}\dif t_1\int_0^{t_1}\dif s_2 \int_{s_1}^{T} (s_1-s_2)^{2H-2}(t_2-t_1)^{2H-2}e^{-|t_1-s_1|-|t_2-s_2|}\dif t_2.
				\end{align*}

For integral $\bar{J}_{211}(T)$, making the change of variables $u=t_2-t_1,\,v=s_1-t_1,\,x=s_1-s_2$, we have
				\begin{align*}
					\bar{J}_{211}(T)=\int_{1}^{T}\dif s_1\int^{s_1}_1 e^{-x}x^{2H-2}\dif x\int^x_1\dif v\int^v_1 e^{-u}u^{2H-2}\dif u.
				\end{align*}

The Partial integration formulate implies that when $T\to\infty$, the asymptote of $\bar{J}_{211}(T)$ is
				\begin{align}
& T \int^{\infty}_1e^{-x}x^{2H-2}\dif x\int^x_1 e^{-u}u^{2H-2} (x-u)\dif u+C_H.  \label{BarJ211 jianjinxian}
				\end{align}

For integral $\bar{J}_{211}(T)$, making the change of variables $u=t_2-t_1,\,v=s_1-t_1,\,x=s_1-s_2$, we have
				\begin{align*}
					\bar{J}_{212}(T)=\int^T_1\dif t_2\int^{t_2}_1e^{-y}\dif y\int^y_1 u^{2H-2}\dif u \int^u_1 e^{-v}(y-u+v)^{2H-2}\dif v.
				\end{align*}

The Partial integration formulate implies that when $T\to\infty$, the asymptote of $\bar{J}_{212}(T)$ is
				\begin{align}
& T\times\int^{\infty}_1e^{-y}\dif y\int^y_1 u^{2H-2}\dif u \int^u_1 e^{-v}(y-u+v)^{2H-2}\dif v +C_H\notag\\
&=T\times\Big[2\int_1^{\infty}e^{-u}u^{2H-2}\dif u \int_1^{u} e^{-v} v^{2H-1}\dif v  -\big(\int_1^{\infty} e^{-u}u^{2H-2}\dif u \big)^2 \Big]+C_H.\label{BarJ212 jianjinxian}
				\end{align}

Fix the integral variable $(s_1, s_2,t_1)$ of $\bar{J}_{22}(T)$ again. We decompose the integral region $[t_1+1,\,T]$ of integral variable $t_2$ into $[t_1+1,\,s_2+1]\cup[s_2+1,\, s_1]\cup [s_1,T]$. The integral $\bar{J}_{22}(T)$ restricted to corresponding subregion is $\bar{J}_{221}(T),\,\bar{J}_{222}(T),\,\bar{J}_{223}(T)$ respectively, where
				\begin{align*}
					\bar{J}_{221}(T)=&\int^T_1\dif s_1\int_0^{s_1-1}\dif s_2\int^{s_2}_0\dif t_1 \int_{t_1+1}^{s_2+1} (s_1-s_2)^{2H-2}(t_2-t_1)^{2H-2}e^{-|t_1-s_1|-|t_2-s_2|}\dif t_2,\\
					\bar{J}_{222}(T)=&\int^T_1\dif s_1\int_0^{s_1-1}\dif s_2\int^{s_2}_0\dif t_1 \int^{s_1}_{s_2+1} (s_1-s_2)^{2H-2}(t_2-t_1)^{2H-2}e^{-|t_1-s_1|-|t_2-s_2|}\dif t_2,\\
					\bar{J}_{223}(T)=&\int^T_1\dif s_1\int_0^{s_1-1}\dif s_2\int^{s_2}_0\dif t_1 \int^T_{s_1}(s_1-s_2)^{2H-2}(t_2-t_1)^{2H-2}e^{-|t_1-s_1|-|t_2-s_2|}\dif t_2.
				\end{align*}

For integral $\bar{J}_{221}(T)$ and integral $\bar{J}_{222}$, making the change of variables, we have
\begin{align*}
				\bar{J}_{221}(T)&=\int_{1}^{T}\dif s_1\int^{s_1}_1 e^{-v}\dif v\int^v_1(v-z+1)^{2H-2}\dif z\int^z_1 x^{2H-2}e^{-|z-x-1|}\dif x,\\
				\bar{J}_{222}(T)&=\int^T_1\dif s_1\int^{s_1}_1e^{-v}\dif v\int^v_1e^{-x}x^{2H-2}\dif x\int^x_1e^{z-1}(v-z+1)^{2H-2}\dif z.
\end{align*}

The Partial integration formulate and Fubini Theorem imply that when $T\to\infty$, the asymptotes of $\bar{J}_{221}(T)$ and  $\bar{J}_{222}(T)$ are
\begin{align}
& T\times \int^{\infty}_1e^{-v}\dif v\int^v_1(v-z+1)^{2H-2}\dif z\int^z_1 x^{2H-2}e^{-|z-x-1|}\dif x+C_H\notag\\
&=\frac{3}{2}T\times \Big(\int_1^{\infty} e^{-u} u^{2H-2}\dif u \Big)^2+ C_H\\
&T\times \int^{\infty}_1e^{-v}\dif v\int^v_1e^{-x}x^{2H-2}\dif x\int^x_1e^{z-1}(v-z+1)^{2H-2}\dif z+C_H\notag\\
&=2T\times\int_1^{\infty}e^{-x}x^{2H-2}\int_1^{x} e^{-y}(y^{2H-1}-y^{2H-2})\dif y  +C_H.
\end{align}

For integral $\bar{J}_{223}(T)$, making the change of variables, we have
				\begin{align*}
					\bar{J}_{223}(T)=\int_{1}^{T}\dif t_2\int^{t_2}_1e^{-u}u^{2H-2}\dif u\int^u_1\dif y\int^y_1e^{-x}x^{2H-2}\dif x.
				\end{align*}

By the Partial integration formulate,  we obtain when $T\to\infty$, the asymptotes of $\bar{J}_{223}(T)$ is 
\begin{align}
T\times \int^{\infty}_1  e^{-u}u^{2H-2}\dif u\int^u_1 e^{-x}x^{2H-2} (u-x)\dif x +C_H. \label{BarJ223 jianjinxian}
\end{align}
Combining  \eqref{BarJ211 jianjinxian}--\eqref{BarJ223 jianjinxian}, we obtain the asymptote of $\bar{J}_{2}(T)$
\begin{align}
 T\Bigg[ (4H-\frac{5}{2})\big(\int_1^{\infty} e^{-u} u^{2H-2}\dif u \big)^2 +2 \int_1^{\infty} e^{-1-u} u^{2H-2}\dif u   \Bigg]. \label{BarJ2 jianjinxian}
\end{align}
Finally, we obtain the asymptote \eqref{BarJ jianjinxian} of $\bar{J}(T)$ by combining \eqref{barJ1T  jianjx} and \eqref{BarJ2 jianjinxian}
			\end{proof}

\begin{lemma}\label{2.2}
Let \eqref{nT expression} be the expression of the quadruple integral $N(T)$, then when $T\to\infty$, the asymptote of $N(T)$ is 
\begin{align}
    T&\times \Bigg[\int_1^{\infty} e^{-u}u^{2H-2}\dif u\times \Big[e^{-1}-e   +(4H-1)\int_0^1(e^{x}-e^{-x})x^{2H-1}\dif x\Big] \notag\\
    &+\int_0^1(e^{x-1}-e^{-x-1})x^{2H-1}\dif x \Bigg] +C_H.\label{NT jianjinxian}
\end{align}
			\end{lemma}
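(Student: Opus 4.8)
The plan is to treat $s_1$ as the single free ``macroscopic'' variable, ranging over $[0,T]$, and to integrate out the three remaining variables, each of which is localized. The factor $e^{-\abs{t_1-s_1}}$ confines $t_1$ to a neighbourhood of $s_1$; the inner $s_2$-integration runs over the short interval $[(s_1-1)\vee 0,(s_1+1)\wedge T]$, so $\abs{s_1-s_2}\le 1$; and since $t_2\le t_1-1$ with $s_2\approx s_1\approx t_1$, the factor $e^{-\abs{t_2-s_2}}$ forces effective exponential decay in $t_1-t_2$. Consequently the term of order $T$ is produced by $s_1$ sweeping the bulk $s_1\in[1,T-1]$, whereas the two boundary strips $[0,1]$ and $[T-1,T]$, where the $s_2$-interval is clipped by $\vee 0$ or $\wedge T$, contribute only an $O(1)$ amount that is absorbed into $C_H$.

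Restricting to the bulk, I would perform the (unit-Jacobian) change of variables
$$x=s_1-s_2,\qquad u=t_1-t_2,\qquad w=t_1-s_1,$$
after which the integrand no longer depends on $s_1$ and reads
$$e^{-\abs{w}}\,u^{2H-2}\,\sgn(x)\abs{x}^{2H-1}\,\sgn(u-w-x)\,e^{-\abs{u-w-x}},$$
while the ranges become $x\in[-1,1]$, $u\in[1,\infty)$ and $w\in\R$ up to exponentially small truncation errors. The key simplification is that, for fixed $u,x$ (and noting $a:=u-x\ge 0$), the $w$-integral $\int_{\R}e^{-\abs{w}}\sgn(a-w)e^{-\abs{a-w}}\dif w$ evaluates in closed form to $a\,e^{-a}=(u-x)e^{-(u-x)}$, which disposes of both sign functions at once. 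The $s_1$-integration then merely contributes the factor $T$ (up to $O(1)$), leaving
$$T\int_1^{\infty}u^{2H-2}e^{-u}\dif u\int_{-1}^{1}\sgn(x)\abs{x}^{2H-1}(u-x)e^{x}\dif x .$$

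Folding the two sign cases $x\gtrless 0$ turns the inner integral into $\int_0^1(e^x-e^{-x})x^{2H-1}\dif x$ (the $\sinh$-type weight appearing in \eqref{NT jianjinxian}) plus an $\int_0^1 x^{2H}(e^x+e^{-x})\dif x$ piece coming from the $-x$ term. It remains to reorganise these via the integration-by-parts (``partial integration'') manipulation already used in Lemmas \ref{M11} and \ref{yyijh}: integrating $\int_1^\infty u^{2H-1}e^{-u}\dif u$ by parts produces $e^{-1}+(2H-1)\int_1^\infty u^{2H-2}e^{-u}\dif u$, and integrating $\int_0^1 x^{2H}(e^x+e^{-x})\dif x$ by parts produces $(e-e^{-1})-2H\int_0^1 x^{2H-1}(e^x-e^{-x})\dif x$. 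Combining the two supplies exactly the coefficient $(4H-1)$ and separates the constant piece $e^{-1}-e$ from the $\sinh$-weighted piece in the bracket of \eqref{NT jianjinxian}, matching the claimed slope; Lemma \ref{e^uu^a} is invoked throughout to guarantee convergence of the tails.

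The main obstacle is not any single computation but the uniform-in-$T$ control of all the truncation ``defects.'' One must verify that replacing the true $w$-range $[1-s_1,T-s_1]$ (and the cap $u\le s_1+w$) by $\R\times[1,\infty)$, as well as the clipping of the $s_2$-interval in the boundary strips, each costs only $O(1)$; concretely, every such defect integral should be recognised, via Lemma \ref{e^uu^a} together with the exponential decay of $e^{-\abs{w}}$ and $e^{-\abs{t_2-s_2}}$, as a convergent integral times a bounded factor, so that it enters $C_H$ and not the slope. Carrying this bookkeeping out region by region is where the real work lies.
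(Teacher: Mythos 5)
Your proposal is correct and reaches exactly the slope in \eqref{NT jianjinxian}, but by a genuinely different route from the paper. The paper's proof removes the two sign factors piecewise: it splits $N(T)$ into $N_1,\dots,N_4$ according to the sign of $s_1-s_2$ and the boundary clipping, shows $N_3,N_4$ converge, and then computes $N_1$ and $N_2$ through a cascade of sub-decompositions ($N_{11},N_{12}$, then $O_1,O_2$; $N_{21},N_{22}$, then $O_1',O_2'$), each handled by its own change of variables and partial integration, before recombining all the pieces. You instead exploit the approximate translation invariance in $s_1$: after the single substitution $x=s_1-s_2$, $u=t_1-t_2$, $w=t_1-s_1$, the integrand no longer depends on $s_1$, and your closed-form identity
\begin{equation*}
\int_{\R}e^{-\abs{w}}\,\sgn(a-w)\,e^{-\abs{a-w}}\,\dif w=a\,e^{-a},
\qquad a=u-x\ge 0,
\end{equation*}
which I checked by splitting at $w=0$ and $w=a$ (the three pieces give $\tfrac12 e^{-a}+a e^{-a}-\tfrac12 e^{-a}$), disposes of both $\sgn$ factors in one stroke; your two integrations by parts then yield precisely the bracket $e^{-1}-e+(4H-1)\int_0^1(e^x-e^{-x})x^{2H-1}\dif x$ and the term $\int_0^1(e^{x-1}-e^{-x-1})x^{2H-1}\dif x$. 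One sign caution: the $\int_0^1x^{2H}(e^x+e^{-x})\dif x$ piece enters with a \emph{minus} sign, since it arises as $-\int_{-1}^{1}\sgn(x)\abs{x}^{2H-1}\,x\,e^{x}\dif x$; your phrase ``plus a piece'' glosses over this, but the coefficient $(4H-1)=(2H-1)+2H$ and the constant $e^{-1}-e$ (rather than $e-e^{-1}$) only emerge with that sign, so your final combination implicitly uses it. As for what each approach buys: the paper's cascade keeps every step an elementary iterated integral with explicit limits, at the cost of length and of tracking eight separate asymptotes; yours is shorter and makes the structure transparent (the slope is a single stationary ``profile'' integral), at the cost of the truncation bookkeeping you flag as the real work. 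On that point one refinement is genuinely needed: an asymptote is a limit, so the defects must be shown to \emph{converge}, not merely be $O(1)$. This does follow from your own bounds, because each defect --- extending $w$ from $[1-s_1,T-s_1]$ to $\R$, removing the cap $u\le s_1+w$, and the two strips $s_1\in[0,1]\cup[T-1,T]$ --- is dominated by $C_H\bigl(e^{-(s_1-1)}+e^{-(T-s_1)}\bigr)$ and depends on $(s_1,T)$ only through $s_1$ or through $T-s_1$; hence the total defect is a sum of terms of the form $\int_1^{T-1}G(y)\dif y$ with $\abs{G(y)}\le C_H e^{-y}$, each of which converges as $T\to\infty$. (For the paper's headline bound \eqref{main} plain boundedness would suffice, but the lemma as stated, and \eqref{jianjinxian mubiao}, require the limit.) With that sentence added, your argument is complete.
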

			
\begin{proof}
We divide the integral region $\set{0\le s_1\le T,\,\, {(s_1-1)\vee 0}\le s_2\le {(s_1+1)\wedge T}}$	
of  integral variable $(s_1,\,s_2)$ of $N(T)$ into 
\begin{align*}
   \set{0\le s_1-1\le s_2\le s_1\le T}\cup\set{0\le s_2-1\le s_1\le s_2\le T}\\
   \cup\set{0\le s_2\le s_1\le 1}\cup\set{0\le s_1\le s_2\le 1}. 
\end{align*}

The integral $N(T)$ over the corresponding region is $N_1(T),\,N_2(T),N_3(T),N_4(T)$, where
				\begin{align*}
					N_1(T)=&\int_{[1,T]^2} e^{-|t_1-s_1|}\dif t_1 \dif s_1 \int^{t_1-1}_0(t_1-t_2)^{2H-2}\dif t_2 \int^{s_1}_{s_1-1} \\
				&\times\sgn(s_2-t_2)e^{-|t_2-s_2|}(s_1-s_2
					)^{2H-1}\dif s_2,\\
					N_2(T)=&-\int_{[1,T]^2}\dif t_1 \dif s_2\int^{t_1-1}_0(t_1-t_2)^{2H-2}\dif t_2 \int^{s_2}_{s_2-1}\\
				&\times\sgn(s_2-t_2)e^{-|t_1-s_1|-|t_2-s_2|}(s_2-s_1
					)^{2H-1}\dif s_1, \\
					N_3(T)=&\int^T_1\dif t_1\int^{t_1-1}_0(t_1-t_2)^{2H-2}\dif t_2 \int^1_0\dif s_1\int^{s_1}_0\\
				&\times\sgn(s_2-t_2)e^{-|t_1-s_1|-|t_2-s_2|}(s_1-s_2
					)^{2H-1}\dif s_2,\\
					N_4(T)=&\int^T_1\dif t_1\int^{t_1-1}_0(t_1-t_2)^{2H-2}\dif t_2\int^1_{0}\dif s_2\int^{s_2}_{0}\\
				&\times\sgn(s_2-t_2)e^{-|t_1-s_1|-|t_2-s_2|}(s_2-s_1
					)^{2H-1}\dif s_1. 
				\end{align*}
First, by the absolute integrability of the double integral		
$$\int^{\infty}_1e^{-t_1}\dif t_1\int^{t_1-1}_0 (t_1-t_2)^{2H-2}\dif t_2\int_{[0,1]^2}\abs{s_2-s_1
					}^{2H-1}e^{s_1} \dif s_1 \dif s_2  ,$$
we know the limit of $N_3(T),\,N_4(T)$ exists when $T\to\infty$. Therefore, integral $N(T)$ and integral $N_1(T)+N_2(T)$ have asymptotes with the same slope but different intercepts. Next we take the asymptotes of $N_1(T)$ and $N_2(T)$ respectively.
 
   We then should decompose the integral region $[1,T]^2$ of integral variable $(s_1, t_1)$ of $N_{1}(T)$ into $1\le t_1\le s_1\le T$ and $1\le s_1\le t_1\le T$ to take the asymptote of $N_1(T)$. And we have
				\begin{align*}
					N_{1}(T)=&(\int_1^T\dif s_1\int^{s_1}_1\dif t_1+\int_1^T\dif t_1\int^{t_1}_1\dif s_1)\int^{t_1-1}_0\dif t_2\int^{s_1}_{s_1-1}\\
					&\times(s_1-s_2)^{2H-1}(t_1-t_2)^{2H-2}\sgn(s_2-t_2)e^{-|t_1-s_1|-|t_2-s_2|}\dif s_2\\
					:=&N_{11}(T)+N_{12}(T)
				\end{align*}
For $N_{11}(T)$, making the change of variables $u= t_1-t_2$, $v =s_1-s_2$, we obtain
				\begin{align*}
					N_{11}(T)=\int_{0}^{1}e^{v}v^{2H-1}\dif v\int_{1}^{T}e^{-2s_1}\dif s_1\int_{1}^{s_1}e^{2t_1}\dif t_1\int_{1}^{t_1}e^{-u}u^{2H-2}\dif u.
				\end{align*}
Therefore, by the Partial integration formulate,  we obtain when $T\to\infty$, the asymptote of $N_{11}(T)$ is 
\begin{align}
\frac{T}{2}\int_{0}^{1}e^{v}v^{2H-1}\dif v\int^\infty_1e^{-u}u^{2H-2}\dif u +C_H.\label{N11T jianjinxian}
				\end{align}
For $N_{12}(T)$, we  fix integral variable $t_1$ and decompose  the integral region $[1,\,t_1]\times [0,t_1-1]$ of integral variable $(s_1, t_2)$  into $0\le t_2\le s_1-1\le t_1-1$ and $1\le s_1\le t_2+1\le t_1$. Then $N_{12}(T)$ split into the sum of the following two integrals:
				\begin{align*}
					N_{12}(T)=&\int_{1}^{T}\dif t_1(\int_{1}^{t_1}\dif s_1\int_{0}^{s_1-1}\dif t_2+\int_{0}^{t_1-1}\dif t_2\int_1^{t_2+1}\dif s_1)\int^{s_1}_{s_1-1}\\
					&\times(s_1-s_2)^{2H-1}(t_1-t_2)^{2H-2}\sgn(s_2-t_2)e^{s_1-t_1-|t_2-s_2|}\dif s_2\\
					:=&O_1(T)+O_2(T)
				\end{align*}
For $O_1(T)$, making the change of variables $u=t_1-t_2,\,v=s_1-t_2,\,x= s_1-s_2$, we have
				\begin{align*}
					O_1(T)=\int^1_0e^xx^{2H-1}\dif x\int_{1}^{T}\dif t_1\int_{1}^{t_1}e^{-u}u^{2H-2}(u-1)\dif u. 
				\end{align*}
Therefore, by the Partial integration formulate,  when $T\to\infty$, the asymptotes of $O_{1}(T)$ is 
				\begin{align}
				T\times \int^1_0e^x x^{2H-1}\dif x\int_{1}^{\infty}e^{-u}(u^{2H-1}-u^{2H-2})\dif u+ C_H.\label{O1 jianjinxian}
				\end{align}
For $O_2(T)$,  making the change of variables $u=t_1-t_2,\,v=t_1-s_1+1,\,x=s_1-s_2$ indicates
				\begin{align*}
					O_2(T)=&\int_{1}^{T}\dif t_1\int^{t_1}_{1}e^{1-v}\dif v\int_{1}^{v}u^{2H-2}\dif u\int^1_0x^{2H-1}\\
					&\times\sgn(u-v-x+1)e^{-|u-v-x+1|}\dif x.
				\end{align*}
By the Partial integration formulate,  when $T\to\infty$, the asymptote of $O_{2}(T)$ is 
				\begin{align}
				&{T} \int^{\infty}_{1}e^{1-v}\dif v\int_{1}^{v}u^{2H-2}\dif u\int^1_0 x^{2H-1} \sgn(u-v-x+1)e^{-|u-v-x+1|}\dif x + C_H\notag\\
				&=
				 {T} \int_1^{\infty}e^{-u}u^{2H-2}\dif u\int_0^1 e^x \big(\frac12 x^{2H-1}-x^{2H}\big)\dif x + C_H, \label{O2 jianjinxian}
				\end{align}
The last equation is obtained by decomposing the region $\set{1\le u\le v<\infty}$ into $\set{1\le u\le v\le 1+u<\infty}\cup\set{1\le u\le v-1<\infty}$, using the Fubini theorem. Combining \eqref{O1 jianjinxian} and \eqref{O2 jianjinxian}, we obtain the asymptote of $N_{12}(T)$ is:
\begin{align}
T\Big[\int^1_0e^{x-1} x^{2H-1}\dif x -\int_{1}^{\infty} e^{1-u} u^{2H-2}\dif u +  (4H-\frac32)\int_{1}^{\infty} e^{-u} u^{2H-2}\dif u\int_0^1 e^x x^{2H-1}\dif x\Big]+C_H. \label{N12T jianjinxian}
\end{align}
Combining the equation and the asymptote \eqref{N11T jianjinxian}, the asymptote of $N_{1}(T)$ is 
\begin{align}
T\Big[\int^1_0e^{x-1} x^{2H-1}\dif x -\int_{1}^{\infty} e^{1-u} u^{2H-2}\dif u +  (4H-1)\int_{1}^{\infty} e^{-u} u^{2H-2}\dif u\int_0^1 e^x x^{2H-1}\dif x\Big]+C_H. \label{N1T jianjinxian}
\end{align}
To take the asymptote of $-N_{2}(T)$, we decompose  the integral region $[1,T]^2$ of integral variable $(t_1,s_2)$ of $-N_{2}(T)$ into $\set{1\le t_1\le s_2\le T}\cup\set{1\le s_2\le t_1\le T}$. Then we obtain
				\begin{align*}
					-N_{2}(T)=&\big(\int^T_1\dif s_2\int^{s_2}_1\dif t_1+\int^T_1\dif t_1\int^{t_1}_1\dif s_2\big)\int^{t_1-1}_0\dif t_2\int^{s_2}_{s_2-1}\\	
					&\times(s_2-s_1)^{2H-1}(t_1-t_2)^{2H-2}\sgn(s_2-t_2)e^{-|t_1-s_1|-|t_2-s_2|}\dif s_1\\  
					:=&N_{21}(T)+N_{22}(T).
				\end{align*}
Making the change of variables, we have
				\begin{align*}
					N_{21}(T)=\int_{1}^{T}\dif s_2\int_{1}^{s_2}e^{-v}\dif v\int^{v}_1u^{2H-2}\dif u\int^{1}_{0}x^{2H-1}e^{-|x-v+u|}\dif x.
				\end{align*}
 The Partial integration formulate and making the change of variable $z=v-u$ imply that when $T\to\infty$, the asymptote of $N_{21}(T)$ is 
				\begin{align}
 &T\times \int^\infty_{1}u^{2H-2}\dif u \int_{u}^{\infty} e^{-v}\dif v\int^{1}_{0}x^{2H-1}e^{-|x-v+u|}\dif x+C_H\notag\\
 &= {T} \times \big[(2H+\frac{1}{2})\int_0^1 e^{-x}x^{2H-1}\dif x -e^{-1}\big]\times \int_1^{\infty} e^{-u} u^{2H-2}\dif u+C_H. \label{N21T jianjinxian}
				\end{align}
For $N_{22}(T)$, we  fix integral variable $t_1$ and decompose  the integral region $ [0,t_1-1]\times[1,\,t_1]$ of integral variable $(t_2,s_2)$  into $1\le t_2+1\le s_2\le t_1$ and $1\le s_2\le t_2+1\le t_1$. Then $N_{22}(T)$ split into the sum of the following two integrals:
				\begin{align*}
					N_{22}(T)=&\int_{1}^{T}\dif t_1\big(\int^{t_1-1}_0\dif t_2\int^{t_1}_{t_2+1}\dif s_2+\int^{t_1}_1\dif s_2\int^{t_1-1}_{s_2-1}\dif t_2\big)\int_{s_2-1}^{s_2}\\
					&\times(s_2-s_1)^{2H-1}(t_1-t_2)^{2H-2}\sgn(s_2-t_2)e^{-|t_1-s_1|-|t_2-s_2|}\dif s_1\\
					:=&O'_1(T)+O'_2(T).
				\end{align*}
Making the change of variables $u=t_1-t_2,\, v= t_1+1-s_2,\,x=s_2-s_1$, we have
				\begin{align*}
				O'_1(T)&=\int^T_1\dif t_1\int^{t_1}_1e^{-u}u^{2H-2}\dif u\int_{1}^{u}\dif v\int^1_0e^{-x}x^{2H-1}\dif x,\\
				O_2'(T)&=\int_{1}^{T}\dif t_1\int_{1}^{t_1}e^{-v}\dif v\int_{1}^{v}e^{-|u-v+1|}u^{2H-2}\sgn(u-v+1)\dif u\int^1_0 e^{1-x}x^{2H-1}\dif x.
				\end{align*}
The Partial integration formulate implies that when $T\to\infty$, the asymptotes of $O'_1(T)$ and $O_2'(T)$ are 
				\begin{align}
		&T\times\int^1_0e^{-x}x^{2H-1}\dif x \int^\infty_1e^{-u}u^{2H-2}(u-1)\dif u+C_H, \label{O'1T jianjinxian} \\
		&T\times \int^1_0e^{1-x}x^{2H-1}\dif x\int^\infty_1e^{-v}\dif v\int_{1}^{v}e^{-|u-v+1|}u^{2H-2}\sgn(u-v+1)\dif u+C_H\notag\\
		&=\frac{T}{2}\times\int^1_0e^{-x}x^{2H-1}\dif x\times \int^{\infty}_1e^{-u}u^{2H-2}\dif u  +C_H. \label{O2'T jianjinxian}
				\end{align}
Combining \eqref{O'1T jianjinxian} and \eqref{O2'T jianjinxian}, we obtain the asymptote of $N_{22}(T)$ is
\begin{align}
 T\times\int^1_0e^{-x}x^{2H-1}\dif x \int^\infty_1e^{-u}u^{2H-2}(u-\frac12)\dif u+C_H.  \label{N22T jianjinxian}.
\end{align}
Combining \eqref{N21T jianjinxian} and \eqref{N22T jianjinxian},  the asymptote of $N_{2}(T)$ is
\begin{align*}
T\Big[\int^1_0 e^{-x-1} x^{2H-1}\dif x -\int_{1}^{\infty} e^{-1-u} u^{2H-2}\dif u +  (4H-1)\int_{1}^{\infty} e^{-u} u^{2H-2}\dif u\int_0^1 e^{-x} x^{2H-1}\dif x\Big]+C_H. 
\end{align*}
The asymptote of $N(t)$ is \eqref{NT jianjinxian}, which is obtained by  the asymptote \eqref{N1T jianjinxian} of $N_1(t)$ minus the above equation.
			\end{proof}
			
\begin{lemma}\label{m31.2.3}
Let \eqref{bar nT expression} be the expression of the quadruple integral $\tilde{N}(T)$, then when $T\to\infty$, the asymptote of $\tilde{N}(T)$ is
\begin{align}
 T\times\Big[(1+e^{-2})+\big[(2H+1)e^{-1}+(2H-1) e\big] \int_1^{\infty} e^{-u}u^{2H-2}\dif u \Big] + C_H.\label{nt jianjx bds}
\end{align}
\end{lemma}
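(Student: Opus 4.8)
The plan is to exploit the one feature that distinguishes $\tilde N(T)$ from $N(T)$: its innermost $s_2$-integral is taken against the signed pair of Dirac masses $\delta_{(s_1-1)\vee0}-\delta_{(s_1+1)\wedge T}$, so it does not produce a genuine integral but collapses to two endpoint evaluations. First I would carry out this $s_2$-integration. On the bulk strip $s_1\in[1,T-1]$ one has $(s_1-1)\vee0=s_1-1$ and $(s_1+1)\wedge T=s_1+1$; since $\sgn\big(s_1-(s_1-1)\big)=+1$, $\sgn\big(s_1-(s_1+1)\big)=-1$ and $|{\pm}1|^{2H-1}=1$, both point masses enter with a plus sign and
\begin{align*}
\int^{(s_1+1)\wedge T}_{(s_1-1)\vee 0}e^{-|t_2-s_2|}\sgn(s_1-s_2)|s_1-s_2|^{2H-1}\big(\delta_{(s_1-1)\vee0}-\delta_{(s_1+1)\wedge T}\big)(\dif s_2)=e^{-|t_2-(s_1-1)|}+e^{-|t_2-(s_1+1)|}.
\end{align*}
On the two boundary strips $s_1\in[0,1)$ and $s_1\in(T-1,T]$ the endpoint evaluations instead carry the factors $s_1^{2H-1}$, respectively $(T-s_1)^{2H-1}$; since $2H-1>-1$ these are integrable over the length-one strips, and together with the localization forced by $e^{-|t_1-s_1|}$ and the weight $(t_1-t_2)^{2H-2}$ they contribute only $O(1)$, which I absorb into $C_H$.

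Second, keeping $s_1$ as the outer variable on the bulk strip, I would substitute $w=t_1-s_1$ and $u=t_1-t_2$ (Jacobian one), so that $e^{-|t_1-s_1|}=e^{-|w|}$, $(t_1-t_2)^{2H-2}=u^{2H-2}$ with $u\ge1$, and the bracket above becomes $e^{-|w-u+1|}+e^{-|w-u-1|}$. This gives
\begin{align*}
\tilde N(T)&=\int_1^{T-1}\Phi(s_1,T)\,\dif s_1+C_H,\\
\Phi(s_1,T)&=\int_{1-s_1}^{T-s_1} e^{-|w|}\dif w\int_1^{s_1+w} u^{2H-2}\big(e^{-|w-u+1|}+e^{-|w-u-1|}\big)\dif u.
\end{align*}
By translation invariance along the diagonal, $\Phi(s_1,T)$ differs from the $s_1$- and $T$-free constant
\begin{align*}
C_H'=\int_{\R}e^{-|w|}\dif w\int_1^\infty u^{2H-2}\big(e^{-|w-u+1|}+e^{-|w-u-1|}\big)\dif u
\end{align*}
only through the omitted tails at $w=1-s_1$, at $w=T-s_1$, and at $u=s_1+w$, each of which decays exponentially once $s_1$ is in the interior of $[1,T]$; hence $\int_1^{T-1}|\Phi(s_1,T)-C_H'|\,\dif s_1=O(1)$. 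This is exactly the mechanism of Lemmas \ref{M11}, \ref{yyijh} and \ref{2.2}: Lemma \ref{e^uu^a} and the partial integration formula give $\int_1^{T-1}\big(C_H'+o(1)\big)\dif s_1=C_H'\,T+C_H$, so the slope of the asymptote of $\tilde N(T)$ is $C_H'$.

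Third, I would evaluate $C_H'$ in closed form. The inner $w$-integral is a convolution with the elementary value $\int_{\R}e^{-|w|}e^{-|w-a|}\dif w=(1+|a|)e^{-|a|}$; applied with $a=u-1\ge0$ and $a=u+1>0$ it reduces the integrand to $e^{-u}\big[(e+e^{-1})u+2e^{-1}\big]$, whence
\begin{align*}
C_H'=(e+e^{-1})\int_1^\infty u^{2H-1}e^{-u}\dif u+2e^{-1}\int_1^\infty u^{2H-2}e^{-u}\dif u.
\end{align*}
A single integration by parts, $\int_1^\infty u^{2H-1}e^{-u}\dif u=e^{-1}+(2H-1)\int_1^\infty u^{2H-2}e^{-u}\dif u$, then yields the constant part $(e+e^{-1})e^{-1}=1+e^{-2}$ and collects the remaining weight into the coefficient $(2H+1)e^{-1}+(2H-1)e$ multiplying $\int_1^\infty u^{2H-2}e^{-u}\dif u$, which is exactly the slope asserted in \eqref{nt jianjx bds}. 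The final evaluation is therefore transparent; the genuine difficulty lies in the two verifications preceding it. One is reading off the correct signed endpoint contributions from the Dirac masses. The other, which is the real novelty relative to the earlier appendix lemmas, is that $\Phi(s_1,T)$ truly depends on $T$, so one must check that the truncations at \emph{both} ends of the $s_1$-strip perturb the slope only by an $O(1)$ amount; the uniform-in-$T$ tail control at the far end $s_1\approx T$ is the one step that requires real care.
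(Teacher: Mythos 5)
Your proposal is correct: the signs of the two Dirac contributions, the change of variables, the convolution identity, and the final integration by parts all check out, and your constant $C_H'$ agrees exactly with the slope in \eqref{nt jianjx bds} (the paper arrives at $(e+e^{-1})\int_1^\infty e^{-u}u^{2H-1}\dif u+2e^{-1}\int_1^\infty e^{-u}u^{2H-2}\dif u$ and then performs the same integration by parts). Your route shares the paper's opening move — integrating out the Dirac masses and discarding the boundary strips $s_1\in[0,1)$ and $s_1\in(T-1,T]$ as bounded — but then genuinely diverges. The paper keeps the two endpoint terms separate as triple integrals $\tilde N_1(T)$ and $\tilde N_2(T)$ and computes each by a different ad hoc decomposition: for $\tilde N_1$ the coordinates $w=s_1\vee t_1$, $v=|s_1-t_1|$, $u=t_1-t_2$ followed by a two-part split, and for $\tilde N_2$ a three-way partition of the $(s_1,t_1)$ region, each piece handled by the partial integration formula. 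You instead treat both endpoint terms at once, recognize the translation invariance in $s_1$, and collapse the entire $w$-integration with the closed-form identity $\int_{\R}e^{-|w|}e^{-|w-a|}\dif w=(1+|a|)e^{-|a|}$, which replaces roughly two pages of case analysis by three lines; your tail estimates work precisely because $2H-2<-1$ makes $u^{2H-2}$ integrable at infinity while the exponential factors dominate every truncation error. One refinement you should add: you only prove $\int_1^{T-1}|\Phi(s_1,T)-C_H'|\,\dif s_1=O(1)$, which pins down the slope but not the existence of a limiting intercept, whereas the lemma (as used for \eqref{jianjinxian mubiao}) asserts a true asymptote; this is repaired by noting that each of your three truncation errors is monotone in $T$ and dominated by an integrable function of $s_1$, so the $O(1)$ quantity in fact converges as $T\to\infty$. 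With that sentence added, your argument is a complete and appreciably shorter proof than the paper's.
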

\begin{proof} By integrating Dirac function, we write the quadruple integral $\tilde{N}(T)$ as the sum of the following two triple integrals:
\begin{align}
    \tilde{N}(T)&=\tilde{N}_1(T)+ \tilde{N}_2(T),\label{barn1barn2fenjie}
\end{align} where
\begin{align*}
    \tilde{N}_1(T)&= \int^T_0\dif s_1\int^T_1 e^{-|t_1-s_1|}\dif t_1\int^{t_1-1}_0 (t_1-t_2)^{2H-2} \dif t_2 \big(s_1-(s_1-1)\vee 0 \big)^{2H-1} e^{-|t_2-(s_1-1)\vee 0|}, \\
		\tilde{N}_2(T)&=\int^T_0\dif s_1\int^T_1 e^{-|t_1-s_1|}\dif t_1\int^{t_1-1}_0 (t_1-t_2)^{2H-2} \dif t_2 \big((s_1+1)\wedge T-s_1\big)^{2H-1} e^{-|t_2-(s_1+1)\wedge T|}.
\end{align*}

First, solve the asymptote of triple integral $\tilde{N}_1(T)$. Then divide the integral region $s_1\in [0,T]$ into $[0,1)\cup [1,T]$. Making the change of variable $u=t_1-t_2$, we get that the triple integral of the subinterval $s_1\in [0,1)$ connection is:
\begin{align*}
   & \int^1_0 s_1 ^{2H-1} \dif s_1\int^T_1 e^{-(t_1-s_1)}\dif t_1\int^{t_1-1}_0  e^{-t_2 }(t_1-t_2)^{2H-2} \dif t_2\\
    &=\int^1_0 e^s s_1 ^{2H-1} \dif s_1\int^T_1 e^{-2t_1 }\dif t_1\int^{t_1}_1  e^{u}u^{2H-2} \dif u.
\end{align*} When $T\to \infty$, its limit exists. Then the asymptote of $\tilde{N}_1(T)$ and the triple integral
\begin{align*}
   \tilde{N}_{11}(T)&= \int^T_1  \dif s_1\int^T_1 e^{-\abs{t_1-s_1}}\dif t_1\int^{t_1-1}_0  e^{-\abs{t_2-s_1+1 }}(t_1-t_2)^{2H-2} \dif t_2
\end{align*}
connected with $\tilde{N}_1(T)$ in the integral sub region $s_1\in [1,T]$ have the same slope asymptote (different intercept terms). Making the change of variables $$w=s_1\vee t_1,\,\,v=\abs{s_1-t_1},\,\, u=t_1-t_2$$
Triple integral $\tilde{N}_{11}(T)$ is rewritten as
\begin{align}
    \tilde{N}_{11}(T)&=\int^T_1  \dif w \int^{w-1}_0 e^{-v}\dif v\Big[\int_1^{w-v}u^{2H-2}e^{-(v+u-1)}\dif u + \int_1^{w}u^{2H-2}e^{-\abs{v-u+1}}\dif u\Big].\label{n11t biaodsh}
\end{align}
Making the change of variables $$u'=u-1,\,\,x=v+u' $$
We know that the first part of triple integral \eqref{n11t biaodsh} is
\begin{align*}
& \int^T_1  \dif w \int^{w-1}_0 e^{-v}\dif v \int_1^{w-v}u^{2H-2}e^{-(v+u-1)}\dif u \\
&=\int^T_1  \dif w \int^{w-1}_0 e^{-2x}\dif x \int_0^{x}(1+u')^{2H-2}e^{u'}\dif u'.
\end{align*}
It is easy to know from the Partial integral formula and Fubini theorem that the asymptote of the above formula is
\begin{align}
T \int^{\infty}_0 e^{-2x}\dif x \int_0^{x}(1+u')^{2H-2}e^{u'}\dif u' +C_H =\frac12 T\int_1^{\infty} e^{1-u}u^{2H-2}\dif u +C_H. \label{diyibufenjianjinxian}
\end{align}
Making the change of variable $u'=u-1$ and Fubini theorem, the second part of triple integral \eqref{n11t biaodsh} is
\begin{align*}
    &\int^T_1  \dif w \int^{w-1}_0 e^{-v}\dif v\int_1^{w}u^{2H-2}e^{-\abs{v-u+1}}\dif u\\
    &=\int^T_1  \dif w \int^{w-1}_0 e^{-v}\dif v\Big[\int_1^{1+v}u^{2H-2}e^{ u-v-1}\dif u+\int_{1+v}^{w}u^{2H-2}e^{  1+v-u }\dif u\Big]\\
    &=\int^T_1  \dif w \int^{w-1}_0 e^{-2v}\dif v\int_0^{v} e^{ u' } (u'+1)^{2H-2}\dif u'+\int^T_1  \dif w \int^{w-1}_0 e^{ -u'} (u'+1)^{2H-2}u'\dif u'
\end{align*}
According to the Partial integral formula, the asymptote of the above formula is
\begin{align}
   &T \Big[\int^{\infty}_0 e^{-2x}\dif x \int_0^{x}(1+u')^{2H-2}e^{u'}\dif u'+\int^{\infty}_0 e^{-u'} (1+u')^{2H-2}u'\dif u' \Big]+C_H \notag\\
   &=T \Big[\frac12\int_1^{\infty} e^{1-u}u^{2H-2}\dif u+\int^{\infty}_1 e^{1-u} u^{2H-2}(u-1)\dif u \Big]+C_H.\label{dierbufen jianjinxian}
\end{align}
Combining \eqref{diyibufenjianjinxian} and \eqref{dierbufen jianjinxian}, we get the asymptote of triple integral $\tilde{N}_1(T)$ as:
\begin{align}
    T \times \int^{\infty}_1 e^{1-u} u^{2H-1} \dif u +C_H.\label{barn11t jianjinxian}
\end{align}

Next, we solve the asymptote of triple integral $\tilde{N}_2(T)$. Similarly, we divide the integral region $s_1\in [0,T]$ into $[0,T-1]\cup (T-1,T]$. Making the change of variable $u=t_1-t_2$, we obtain the limit existence of triple integrals associated with subinterval $s_1\in (T-1,T]$ when $T\to \infty$. Therefore, the asymptote of $\tilde{N}_2(T)$ and the following triple integral
\begin{align*}
    \tilde{N}_{21}(T)&=\int^{T-1}_{0}  \dif s_1\int^T_1 e^{-\abs{t_1-s_1}}\dif t_1\int^{t_1-1}_0  e^{-\abs{t_2-s_1-1 }}(t_1-t_2)^{2H-2} \dif t_2
\end{align*}
have the same slope (different intercept terms). For triple integrals $\tilde{N}_{21}(T)$, first making the change of variable $u=t_1-t_2$, and then we divide the integral region $[0,T-1]\times [1,T]$ of the integral variable $(s_1,t_1)$ as follows:
\begin{align*}
  \set{ 1\le t_1\le s_1\le T-1 }\cup \set{ 0\le t_1-1\le s_1\le t_1\wedge(T-1)\le T}\cup\set{ 0\le s_1\le t_1-1\le T-1 }, 
\end{align*}
we have
\begin{align}
    \tilde{N}_{21}(T)&=\int^{T-1}_{0}  \dif s_1\int^T_1 e^{-\abs{t_1-s_1}}\dif t_1\int^{t_1}_1  e^{-\abs{t_1-s_1-u-1 }}u^{2H-2} \dif u\notag \\
    &= \big[\int^{T-1}_{1}\dif s_1\int^{s_1}_1\dif t_1+\int_1^T\dif t_1\int^{t_1\wedge (T-1)}_{t_1-1}\dif s_1+\int_1^T\dif t_1\int_{0}^{t_1-1}\dif s_1\big] \notag\\
    &\times \int^{t_1}_1  e^{-\abs{t_1-s_1}-\abs{t_1-s_1-u-1 }}u^{2H-2} \dif u.\label{n21t biaodashi}
\end{align}
According to the Partial integral formula, the asymptotes of the first, second and third parts of triple integral \eqref{n21t biaodashi} are:
\begin{align*}
     \frac{T}{2}  \times \int^{\infty}_1 e^{-1-u} u^{2H-2} \dif u +C_H,\\
    T\times \int^{\infty}_1 e^{-1-u} u^{2H-2} \dif u +C_H,\\
    T\times  \int^{\infty}_1 e^{-1-u} \big[u^{2H-1}+\frac12u^{2H-2} \big] +C_H.
\end{align*}
Combining the three asymptotes above, we get triple integral the asymptote of $\tilde{N}_2(T)$ is
\begin{align}
    T\times  \int^{\infty}_1 e^{-1-u} \big[u^{2H-1}+ 2 u^{2H-2} \big] +C_H.\label{n2t jianjinxian biaods}
\end{align}

Finally, we combine the asymptote \eqref{barn11t jianjinxian} of $\tilde{N}_1(T)$ and the asymptote \eqref{n2t jianjinxian biaods} of $\tilde{N}_2(T)$ to obtain the asymptote \eqref{nt jianjx bds} of $\tilde{N}(T)$.
\end{proof}	

\begin{lemma}\label{UT BarUT jianjx yinli}
Record two quadruple integrals $U(T),\,\tilde{U}(T)$, as given in \eqref{UT expression} and \eqref{bar UT expression}. When $T\to\infty$, their asymptotes are:
	\begin{align}
				T&\times \Bigg[\int_1^{\infty} e^{-u}u^{2H-2}\dif u\times \Big[e^{-1}-e  +{(4H-1)}\int_0^1(e^{x}-e^{-x})x^{2H-1}\dif x\Big] \notag\\
    &+\int_0^1(e^{x-1}-e^{-x-1})x^{2H-1}\dif x \Bigg] +C_H,\label{UT jianjinxian}\\
				T& \times\Big[(1+e^{-2})+\big[(2H+1)e^{-1}+(2H-1) e\big] \int_1^{\infty} e^{-u}u^{2H-2}\dif u \Big]	 . \label{BarUT jianjinxian}
				\end{align} 
\end{lemma}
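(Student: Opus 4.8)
The plan is to establish \eqref{UT jianjinxian} and \eqref{BarUT jianjinxian} by running the region-decomposition-and-substitution machinery already developed for $N(T)$ and $\tilde N(T)$ in the proofs of Lemma \ref{2.2} and Lemma \ref{m31.2.3}. The starting observation is that $U(T)$ (resp.\ $\tilde U(T)$) is obtained from $N(T)$ (resp.\ $\tilde N(T)$) by interchanging the roles of the two time variables $t_1$ and $t_2$: in \eqref{UT expression} the outer integration runs over $t_2\in[1,T]$ with $t_1\in[0,t_2-1]$, and the kernel $e^{-|t_1-s_1|}$ is attached to the \emph{inner} time variable, whereas in \eqref{nT expression} it is attached to the outer one. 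Relabeling $t_1\leftrightarrow t_2$ therefore brings $U(T)$ and $\tilde U(T)$ into the same outer range $t_1\in[1,T]$, $t_2\in[0,t_1-1]$ as $N(T),\tilde N(T)$, the only residual difference being that the two kernels $e^{-|t_1-s_1|}$ and $e^{-|t_2-s_2|}$ couple $s_1,s_2$ to the opposite time variables, with a transposed sign factor $\sgn(s_2-t_1)$ in place of $\sgn(s_2-t_2)$. This is the mechanism that forces the final slopes to coincide with those of \eqref{NT jianjinxian} and \eqref{nt jianjx bds}.

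For $\tilde U(T)$ I would first integrate out the two Dirac masses $\delta_{(s_1-1)\vee0}$, $\delta_{(s_1+1)\wedge T}$, reducing \eqref{bar UT expression} to a sum $\tilde U_1(T)+\tilde U_2(T)$ of two triple integrals exactly as $\tilde N\mapsto\tilde N_1+\tilde N_2$ in Lemma \ref{m31.2.3}. One then splits the $s_1$-range into the boundary layer and the bulk $[1,T]$ (the boundary layer contributing only to the intercept $C_H$), performs the substitution $u=t_2-t_1$ together with the $w=s_1\vee t_1$, $v=|s_1-t_1|$ type changes of variables used there, and invokes Lemma \ref{e^uu^a} and integration by parts to isolate the coefficient of $T$. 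The computation mirrors the one leading to \eqref{nt jianjx bds} and yields \eqref{BarUT jianjinxian}.

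For $U(T)$ I would proceed as in Lemma \ref{2.2}: decompose the near-diagonal strip $\{(s_1-1)\vee0\le s_2\le(s_1+1)\wedge T\}$ into blocks according to the ordering of $s_1,s_2$ relative to the endpoints $1$ and $T$, discard the corner blocks (which are absolutely integrable, hence give only $O(1)$), then split the remaining $(s_1,t_1)$ and $(t_1,t_2)$ regions by their orderings and substitute $u=t_1-t_2$, $v$, $x$ as in the $N_{11},N_{12},N_{21},N_{22}$ analysis. Each resulting one-dimensional kernel is of the form $e^{-u}u^{2H-2}$ or $e^{\pm x}x^{2H-1}$, so Lemma \ref{e^uu^a} and repeated integration by parts again extract a linear-in-$T$ term; summing the pieces gives \eqref{UT jianjinxian}.

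The main obstacle is precisely the crossed coupling noted above: because the exponential attached to $s_1$ now multiplies the inner time variable, the sub-regions that actually carry the $O(T)$ contribution are shifted relative to those for $N(T),\tilde N(T)$, and the concentration of the kernels is correspondingly displaced. One must therefore track the sign factors $\sgn(s_2-t_2)$ and $\sgn(s_1-s_2)$ carefully through every sub-region and verify that, after the substitutions, the accumulated slope collapses to the same closed form as in Lemma \ref{2.2} and Lemma \ref{m31.2.3}. This coincidence of slopes is the genuinely nontrivial analytic content, whereas the boundary truncations $(\cdot)\vee0$ and $(\cdot)\wedge T$ are routinely absorbed into $C_H$.
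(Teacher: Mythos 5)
Your proposal is correct and coincides with the paper's approach: the paper gives no details for this lemma at all, stating only that the proof is ``basically consistent with'' those of Lemma \ref{2.2} and Lemma \ref{m31.2.3}, which is exactly the strategy you outline (relabel $t_1\leftrightarrow t_2$, rerun the region decompositions and changes of variables, and extract the linear-in-$T$ term via Lemma \ref{e^uu^a} and integration by parts, absorbing boundary layers into $C_H$). Your identification of the residual difference after relabeling --- the exponentials coupling $s_1,s_2$ to the opposite time variables and the transposed sign factor --- is accurate, and tracking it through the sub-regions is precisely what the paper's omitted computation would have to do.
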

The proof of Lemma \ref{UT BarUT jianjx yinli} is basically consistent with the proof of Lemma \ref{2.2} and Lemma \ref{m31.2.3} above. Considering the length of the article, its details are omitted.
			\begin{lemma}\label{yyuhjkl}
			    The marked quadruple integral $L(T)$ is given by \eqref{LT biaodashi}. Then the asymptote of $L(T)$ when $T\to\infty$ is:
				\begin{align}
				 4T&\times\Bigg[(4H+1)\int_0^1e^{-u}u^{2H-1}\dif u\int_0^{u}e^v v^{2H-1}\dif v   -(2H+\frac{1}{2})\big(\int_0^1 e^{-u}u^{2H-1}\dif u\big)^2\notag\\
				 &+ \int_0^1 (e^{-u-1}-e^{u-1})u^{2H-1} \dif u \Bigg]+ C_H.
					 \label{lt jianjinxian}
				\end{align} 
			\end{lemma}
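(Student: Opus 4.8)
The plan is to extract the linear-in-$T$ contribution by restricting the outer variables $(s_1,t_1)$ to the ``bulk'' square $[1,T-1]^2$, where the truncations $(s_1\mp1)$ and $(t_1\mp1)$ never collide with the endpoints $0,T$, and then to argue that everything outside this square contributes only an $O(1)$ constant. Concretely, I would split the $(s_1,t_1)$-domain $[0,T]^2$ into $[1,T-1]^2$ and its complement, which is a union of four strips (say $s_1\in[0,1)\cup(T-1,T]$, or $t_1$ in the analogous strips). On each strip one of the outer variables is confined to an interval of bounded length while the free outer variable is controlled by the exponential factor $e^{-|t_1-s_1|}$; since the inner kernels $|s_1-s_2|^{2H-1}$ and $|t_1-t_2|^{2H-1}$ are integrable singularities and $e^{-|t_2-s_2|}\le1$, each strip contributes a quantity bounded uniformly in $T$. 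This is exactly the absolute-integrability argument already used to dispose of $N_3(T),N_4(T)$ in the proof of Lemma~\ref{2.2}.

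On the bulk $[1,T-1]^2$ the truncations are inactive, so I would change variables to separate the long diagonal direction from the two short near-diagonal displacements. Setting $a=s_1-s_2$ and $b=t_1-t_2$ (so $a,b\in[-1,1]$, $\int_{s_1-1}^{s_1+1}\dif s_2=\int_{-1}^1\dif a$, and likewise for $b$) and writing $w=t_1-s_1$, one computes $t_2-s_2=w+a-b$, so the integrand becomes
\begin{align*}
  e^{-|w|}\,\sgn(a)|a|^{2H-1}\,\sgn(b)|b|^{2H-1}\,e^{-|w+a-b|},
\end{align*}
which is independent of the position along the diagonal. For fixed $w$ the slice $\{(s_1,t_1)\in[1,T-1]^2:\ t_1-s_1=w\}$ has length $T-2-|w|$, so the $(s_1,t_1)$-integration produces the factor $T-2-|w|$. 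Because the remaining weight decays exponentially in $w$, the part $-(2+|w|)$ integrates to an $O(1)$ constant, and the leading slope is
\begin{align*}
  S=\int_{\R}e^{-|w|}\dif w\int_{-1}^{1}\sgn(a)|a|^{2H-1}\dif a\int_{-1}^{1}\sgn(b)|b|^{2H-1}\,e^{-|w+a-b|}\dif b.
\end{align*}
The passage from this reduction to a genuine asymptote of the form $(\text{slope})\times T+C_H$ is justified by the same partial-integration/Fubini principle, built on Lemma~\ref{e^uu^a}, that turns $\int_1^T\dif s\int_1^s(\cdots)$ into $T\int_1^\infty(\cdots)+C_H$ throughout this section.

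It then remains to evaluate $S$ and match it to the coefficient in \eqref{lt jianjinxian}. Here I would use the oddness of $\sgn(a)|a|^{2H-1}$ and $\sgn(b)|b|^{2H-1}$ to fold the $a$- and $b$-integrals onto $[0,1]$, and split the $w$-integral at $0$. The only genuine difficulty is the factor $e^{-|w+a-b|}$, whose absolute value must be resolved by comparing $w$ with $b-a$; this yields a finite list of sign cases. In each case the triple integral factors, via Fubini and (where the exponent changes sign) a further split of the $w$-range exactly as in the treatment of $O_2(T)$ inside Lemma~\ref{2.2}, into products and iterated integrals of $e^{\pm u}u^{2H-1}$ over $[0,1]$. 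Collecting the cases should collapse $S$ to $4$ times the bracketed expression in \eqref{lt jianjinxian}.

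The hard part will be this last step: the bookkeeping of the sign cases for $w+a-b$ and the exact recombination of the resulting one-dimensional integrals into the compact coefficient $4\bigl[(4H+1)\cdots\bigr]$. The geometry and the emergence of the $T$-slope are transparent once the diagonal variable is removed, but verifying that the many pieces sum to precisely the stated constant---including the correct numerical factors $4H+1$ and $2H+\tfrac12$ and the $\pm$ boundary terms $e^{\pm(u-1)}$---is where the genuine effort lies.
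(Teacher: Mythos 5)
Your structural reduction is sound, and it is in essence a reorganization of the paper's own argument: your four sign quadrants of $(a,b)$ are exactly the paper's decomposition $L_1,\dots,L_4$ of \eqref{sigefenjie} (your quadrant symmetries reproduce $L_1=L_4$, $L_2=L_3$), your bulk-versus-strips split plays the role of the paper's separate treatment of the pieces with $s_1<1$ or $t_1<1$ (disposed of there by absolute integrability, as for $N_3,N_4$), and your diagonal variable $w$ is the paper's change of variables $x=s_1-t_1$ in $L_{11}$, resp.\ $x=\abs{s_1-t_2}$ in $L_{21}$. The slice identity $\int_{[1,T-1]^2}F(t_1-s_1)\dif s_1\dif t_1=\int_{-(T-2)}^{T-2}(T-2-\abs{w})F(w)\dif w$, combined with the exponential decay of your $G(w)$, correctly yields $L(T)=ST+O(1)$ with $S=\int_\R G(w)\dif w$. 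One small repair: your strip argument as stated gives only uniform boundedness, whereas an asymptote requires the strip contributions to converge; this follows by dominated convergence (the dominating function $e^{-\abs{t_1-s_1}}H^{-2}$ is integrable on each strip), the same mechanism the paper invokes.

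The genuine gap is that you never evaluate $S$, and that evaluation \emph{is} the lemma: the claim is not that an asymptote exists, but that its slope equals the explicit constant in \eqref{lt jianjinxian}, which is what ultimately feeds into $\sigma_H^2$ and Theorem \ref{qq1}. Asserting that the sign cases ``should collapse'' to the stated bracket is not a proof, especially since the coefficients $4H+1$ and $2H+\tfrac12$ only appear after integrations by parts such as $\int_0^1e^{-v}v^{2H}\dif v=-e^{-1}+2H\int_0^1e^{-v}v^{2H-1}\dif v$, i.e.\ precisely the bookkeeping you defer. To close it from your setup, the cleanest route is the identity $\int_\R e^{-\abs{w}-\abs{w+c}}\dif w=(1+\abs{c})e^{-\abs{c}}$, which collapses the $w$-integration in $S$ and gives
\begin{align*}
S=2\int_{[0,1]^2}a^{2H-1}b^{2H-1}\Big[(1+\abs{a-b})e^{-\abs{a-b}}-(1+a+b)e^{-(a+b)}\Big]\dif a\dif b.
\end{align*}
Writing $B=\int_0^1e^{-u}u^{2H-1}\dif u$ and $A=\int_0^1e^{-u}u^{2H-1}\big(\int_0^ue^vv^{2H-1}\dif v\big)\dif u$, the $e^{-(a+b)}$ term evaluates by one integration by parts to $-2(4H+1)B^2+4\int_0^1e^{-u-1}u^{2H-1}\dif u$, which already accounts for two of the three terms in \eqref{lt jianjinxian}; the $\abs{a-b}$ term, folded onto $\{b\le a\}$ and integrated by parts, must then produce $4(4H+1)A-4\int_0^1e^{u-1}u^{2H-1}\dif u$. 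This last computation is exactly the work the paper carries out for $L_{11}$ and $L_{21}$ in \eqref{l11 jianjinxian} and \eqref{l21 jianjinxian}; until it is written out and matched, the stated formula remains unverified.
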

			\begin{proof}
			    The starting point is to remove the following two absolute value symbols from the quadruple integral $L(T)$: $|s_1-s_2|$ $|s_1-s_2|$ and $|t_1-t_2|$. That is, first of all, we divide the integral region of the integral variables $s_2,t_2$ as follows:
				\begin{align}
					\int^{(s_1+1)\wedge T}_{(s_1-1)\vee 0}\dif s_2\int^{(t_1+1)\wedge T}_{(t_1-1)\vee 0}\dif t_2=(\int^{s_1}_{(s_1-1)\vee0}+\int_{s_1}^{(s_1+1)\wedge T})\dif s_2(\int^{t_1}_{(t_1-1)\vee0}+\int_{t_1}^{(t_1+1)\wedge T})\dif t_2.\label{sigefenjie}
				\end{align}
				The integral values of the four integral $L(T)$ in the above four sub regions are recorded as L$L_1(T),L_2(T),L_3(T),L_4(T)$. It is easy to know by symmetry:
				\begin{align}
					L_1(T)=L_4(T),\qquad L_2(T)=L_3(T).\label{lgdengshi}
				\end{align} 
				
				Then remove the two symbols $\vee$ in $L_1(T)$ respectively, and further decompose the integral into the sum of the following four integrals:
				\begin{align*}
					L_1(T)=&(\int^T_1\dif s_1\int^{s_1}_{s_1-1}\dif s_2+\int^1_0\dif s_1\int^{s_1}_0\dif s_2)(\int^T_1\dif t_1\int^{t_1}_{t_1-1}+\int^1_0\dif t_1\int^{t_1}_0)\\
					&\times(s_1-s_2)^{2H-1}(t_1-t_2)^{2H-1}e^{-|t_1-s_1|-|t_2-s_2|}\dif t_2\\
					:=&L_{11}(T)+L_{12}(T)+L_{13}(T)+L_{14}(T)
				\end{align*}
		First, we notice that the integral $L_{14}(T)$ is independent of $T$, and we get $L_{12}(T)=L_{13}(T)$ through symmetry. Making the change of variables $u=s_1-s_2,\,v=t_1-t_2$, we can deduce:
		\begin{align*}
				  L_{12}(T)&=\int_1^T e^{-s_1}\dif s_1\int_0^1 u^{2H-1}\dif u\int_0^1 e^{t_1}\dif t_1\int_0^{t_1} e^{-\abs{s-t+v-u}}v^{2H-1}\dif v,
				\end{align*}
		Therefore, the $\lim_{T\to\infty}L_{12}(T)$ exists. Again according to symmetry and making the change of variables $u=s_1-s_2,\, v=t_1-t_2,\,x=s_1-t_1$, it is obtained that:
				\begin{align*}
					L_{11}(T)
				&=2\int^T_1\dif s_1\int^{s_1-1}_0 e^{-x}\dif x\int^1_0u^{2H-1}\dif u\int^1_0e^{-|x-u+v|}v^{2H-1}\dif v
				\end{align*}
	It can be seen from the Partial integration formula that the asymptotes of $L_{11}(T)$ and $L_{1}(T)$ when $T\to\infty$ are both (only the intercept term $C_H$ with different difference):
				\begin{align}				&2T\int^{\infty}_{0}e^{-x}\dif x\int^1_0u^{2H-1}\dif u\int^1_0e^{-|x-u+v|}v^{2H-1}\dif v +C_H\notag \\
				&= 2T\Big[(4H+1)\int_0^1e^{-u}u^{2H-1}\dif u\int_0^{u}e^v v^{2H-1}\dif v -\int_0^1e^{v-1}v^{2H-1} \dif v\Big]  +C_H\label{l11 jianjinxian}
				\end{align}
				The above equation divides the integral region of $(u,v)$ into $\set{0\le u\le v\le 1}\cup\set{0\le v\le u\le 1}$, for the second sub region, we divide the integral region of $x$ into $[0,u-v]\cup(u-v,\infty)$ again, and then get it from Fubini theorem.
				
				Similarly, it is decomposed as follows:
				\begin{align*}
					L_2(T)=&-(\int^T_1\dif s_1\int^{s_1}_{s_1-1}\dif s_2+\int^1_0\dif s_1\int^{s_1}_0\dif s_2)(\int^T_1\dif t_2\int^{t_2}_{t_2-1}+\int^1_0\dif t_2\int^{t_2}_0)\\
				&\times(s_1-s_2)^{2H-1}(t_2-t_1)^{2H-1}e^{-|t_1-s_1|-|t_2-s_2|}\dif t_1\\
					:=&-(L_{21}(T)+L_{22}(T)+L_{23}(T)+L_{24}(T)),
				\end{align*}
Where the integral $L_{24}(T)$ is independent of $T$, and the existence of $\lim_{T\to\infty}\big(L_{22}(T)+L_{23}(T)\big)$ is deduced by making the change of variables $v=t_1-t_2,\,u=s_1-s_2$. Then, it can be seen from making the change of variables $$w=\max\set{s_1,\,t_2},\, x=\abs{s_1-t_2},\,u=s_1-s_2,\,v=t_2-t_1$$ that the quadruple integral $L_{21}(T)$ is:
				\begin{align*}
					L_{21}(T)
					&=2\int^{T}_1\dif w\int_{0}^{w}e^{-x}\dif x\int_{[0,1]^2}   e^{-|x-v|-u} u^{2H-1}v^{2H-1}\dif u \dif v.
				\end{align*}
It is known from the Partial integration formula that the asymptotes of $-L_{21}(T)$ and $L_{2}(T)$ when $T\to\infty$ are both (only different intercept terms $C_H$):
				\begin{align}
				&-2T \int_0^1 e^{-u} u^{2H-1} \dif u \times \int_0^{\infty} e^{-x} \dif x\int_0^1 e^{-\abs{x-v}}v^{2H-1} \dif v +C_H\notag\\
				&=-2T\int_0^1 e^{-u}u^{2H-1}\dif u \int_0^1 e^{-v}(v^{2H}+\frac12 v^{2H-1})\dif v +C_H,\label{l21 jianjinxian}
				\end{align}
				The above equation decomposes the integral domain of $(x,v)$ into $\set{0\le x\le v\le 1}\cup\set{0\le v\le 1,\, x>v}$, and then obtains it from Fubini theorem.
		
		Finally, from \eqref{sigefenjie} and \eqref{lgdengshi}, combining the asymptotes \eqref{l11 jianjinxian} and \eqref{l21 jianjinxian} of $L_{1}(T)$ and $L_{2}(T)$ when $T\to\infty$, it is obtained that the asymptote of $L(T)$ is \eqref{lt jianjinxian}.
			\end{proof}
	
\begin{lemma}\label{yyuhjkl02}
The quadruple integral $P(T)$ is given in equation \eqref{PT biaodashi}. Then the asymptote of $P(T)$ when $T\to\infty$ is:
				\begin{align}
 { 2T \times\Big[1-e^{-2} -(2H+1) \int_0^1 (e^{u-1} -e^{-u-1})u^{2H-1}\dif u\Big] +C_H}.\label{p jianjiexian}
				\end{align}
			\end{lemma}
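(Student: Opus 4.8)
The plan is to first integrate out the Dirac mass in the $s_2$ variable, which is the only delta present in \eqref{PT biaodashi}. Writing the bracket as $\delta_{(s_1-1)\vee0}(s_2)-\delta_{(s_1+1)\wedge T}(s_2)$ and integrating it against $\dif s_2$ over $[(s_1-1)\vee0,(s_1+1)\wedge T]$ collapses $P(T)$ into a difference of two triple integrals over $(s_1,t_1,t_2)$,
\begin{align*}
P(T)=P_1(T)-P_2(T),
\end{align*}
where $P_1(T)$ evaluates the $s_2$-integrand at $s_2=(s_1-1)\vee0$ and $P_2(T)$ at $s_2=(s_1+1)\wedge T$. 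In the bulk, i.e. for $s_1\in[1,T]$ in $P_1$ and $s_1\in[0,T-1]$ in $P_2$, the factor $\sgn(s_1-s_2)|s_1-s_2|^{2H-1}$ reduces to the constants $+1$ and $-1$ respectively, so that each triple integral simplifies to an integral of $e^{-|t_1-s_1|}\,\sgn(t_1-t_2)|t_1-t_2|^{2H-1}\,\sgn(s_2^{\ast}-t_2)\,e^{-|t_2-s_2^{\ast}|}$ with $s_2^{\ast}=s_1\mp1$, the $t_2$-integration running over $[(t_1-1)\vee0,(t_1+1)\wedge T]$.

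Next I would strip off the boundary layers exactly as in the proofs of Lemma \ref{2.2} and Lemma \ref{m31.2.3}. For $P_1$ the region $s_1\in[0,1)$ (where $(s_1-1)\vee0=0$) produces an absolutely convergent triple integral and hence contributes only $O(1)$; symmetrically, for $P_2$ the region $s_1\in(T-1,T]$ contributes only $O(1)$. Consequently $P_1(T)$ and $P_2(T)$ have the same slope as the bulk triple integrals obtained by replacing $(s_1-1)\vee0$ and $(s_1+1)\wedge T$ by $s_1-1$ and $s_1+1$, at the cost of changing only the intercept $C_H$.

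For each bulk integral I would then remove the remaining absolute values and sign functions by decomposing the $(s_1,t_1)$-region into $\{t_1\le s_1\}$ and $\{s_1\le t_1\}$ and the $t_2$-interval according to the sign of $s_2^{\ast}-t_2$, and carry out the changes of variables used repeatedly in this appendix, of the form $w=\max\{s_1,t_1\}$, $x=|s_1-t_1|$, $u=t_1-t_2$ (with an auxiliary shift absorbing the $\pm1$). These substitutions separate the single long direction $w\in[1,T]$, on which the integrand behaves like $e^{-2w}$ times a bounded function of the relative coordinates, from the remaining coordinates on which the integrals converge. Applying the integration by parts of Lemma \ref{partial integral} together with Lemma \ref{e^uu^a} and Fubini extracts the leading term $T\times(\text{limit})$ and bounds the remainder by $C_H$. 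Summing the signed contributions of $P_1$ and $P_2$ and simplifying the resulting one-dimensional integrals over $[0,1]$ by splitting $\{u\le v\}\cup\{v\le u\}$ and using Fubini yields \eqref{p jianjiexian}, the prefactor $2$ recording the combination of the two delta contributions.

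The main obstacle will be the bookkeeping of the three sign factors $\sgn(s_1-s_2)$, $\sgn(t_1-t_2)$, $\sgn(s_2-t_2)$ after the delta substitution: once $s_2$ is frozen at $s_1\mp1$, the sign $\sgn(s_1\mp1-t_2)$ couples to the $t_2$-window $[(t_1-1)\vee0,(t_1+1)\wedge T]$ and to $\sgn(t_1-t_2)$, so the domain must be subdivided carefully to linearize every $|\cdot|$ before any change of variables is legitimate. The secondary difficulty is verifying that all the genuinely boundary contributions — both the $s_1$-endpoints and the truncations $(t_1+1)\wedge T$ and $(t_1-1)\vee0$ of the $t_2$-window near $t_1=T$ and $t_1=0$ — are bounded in $T$, so that they perturb only the intercept and leave the slope in \eqref{p jianjiexian} unchanged.
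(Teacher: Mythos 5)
Your proposal follows the paper's own proof essentially step for step: integrate out the Dirac mass in $s_2$ (the paper folds the resulting signs into a sum $P_1+P_2$, you keep them as $P_1-P_2$, which is equivalent), discard the $s_1$- and $t_1$-boundary layers as intercept-only contributions, pass in the bulk $[1,T-1]^2$ to the coordinates $w=\max\set{s_1,t_1}$, $v=\abs{s_1-t_1}$, $u=t_1-t_2$, and extract the slope by the integration-by-parts lemma before recombining the two delta contributions into the prefactor $2$. The only slip is your remark that the integrand behaves like $e^{-2w}$ along the long direction $w$ — if it did, $P(T)$ would stay bounded; in fact the inner integrals over the relative coordinates converge to a constant as $w\to\infty$, which is precisely what yields the linear term $T\times(\text{limit})$ that you correctly announce.
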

\begin{proof}
First, by integrating Dirac function, we write $P(T)$ as the sum of the following two triple integrals:
\begin{align*}
    P(T)&=\int_{[0,T]^2} e^{-|t_1-s_1|}\dif s_1 \dif t_1\int^{(t_1+1)\wedge T}_{(t_1-1)\vee0} \sgn(t_1-t_2)|t_1-t_2|^{2H-1}\dif t_2  \notag\\
			& \times\Big[e^{-|t_2-(s_1-1)\vee0|}\sgn\big((s_1-1)\vee0-t_2\big) \big(s_1-(s_1-1)\vee0\big)^{2H-1}\notag\\
			&+e^{-|t_2-(s_1+1)\wedge T|}\sgn\big((s_1+1)\wedge T-t_2\big) \big((s_1+1)\wedge T-s_1\big)^{2H-1} \Big]\notag\\
			&:= P_1(T)+ P_2(T).
\end{align*}

It is required to solve the asymptote of triple integral $P_1(T)$. First, we divide the region $\set{(s_1,t_1)\in [0,T]^2}$ into the following parts:
\begin{align*}
    &[0,3]\times [0,1],\, [3,T]\times [0,1],\,[0,1]\times [1,T],\, [1,T-1]\times[T-1,T],\\
    &[T-1,T]\times[1,T-1],\,[T-1,T]^2,\,  [1,T-1]^2.
\end{align*}
It is clear that the triple integral $P_1(T)$ restricted in sub-region $[0,3]\times [0,1]$ is independent of $T$ and when $T\to\infty$, the limit of triple integral $P_1(T)$ in sub-region $[3,T]\times [0,1]$ exists. Making the change of variables $u=t_2-t_1$, we have that when $T\to\infty$, the limit of triple integral $P_1(T)$ in sub-region $[0,1]\times [1,T]$ exists; Making the change of variables $y=T-t_1,\,u=t_2-t_1$, when $T\to\infty$, the limit of triple integral $P_1(T)$ in sub region $[1,T-1]\times[T-1,T]$ exists.
It can be seen from making the change of variables $$x=T-s_1,\,y=T-t_1,\,u=t_2-t_1$$ that the integral of triple integral $P_1(T)$ in the sub region $[T-1,T]^2$ is also independent of $T$; When $T\to\infty$, the triple integral $P_1(T)$ is in the sub region $[T-1,T]\times [1,T-1]$ the limit of the integral exists.

Making the change of variables $$w=\max\set{s_1,\,t_1},\, v=\abs{s_1-t_1},\,u=t_2-t_1$$ we can see that the triple integral $P_1(T)$ in sub region $ [1,T-1]^2$ is:
\begin{align*}
   & \int_{[1,T-1]^2} e^{-|t_1-s_1|}\dif s_1 \dif t_1\int^{t_1+1 }_{t_1-1 } \sgn(t_1-t_2)|t_1-t_2|^{2H-1} e^{-|t_2-(s_1-1) |}\sgn\big(s_1-1 -t_2\big) \dif t_2 \\
&= \int_1^{T-1}\dif w \int_0^{w-1} e^{-v}\dif v\int_{-1}^1 \big[e^{-\abs{v+u+1}}+e^{-\abs{u-v+1}}\sgn(u-v+1)\big]\abs{u}^{2H-1}\sgn(u)\dif u.
\end{align*}
According to the Partial integration formula, When $T\to\infty$, the asymptote of the above formula and the asymptote of triple integral $P_1(T)$ are both (combined with the limit existence of integrals in the first six regions, it can be seen that they are only different from each other in terms of intercept $C_H$):
\begin{align*}
    &T \times \int_0^{\infty}e^{-v}\dif v\int_{-1}^1 \big[e^{-\abs{v+u+1}}+e^{-\abs{u-v+1}}\sgn(u-v+1)\big]\abs{u}^{2H-1}\sgn(u)\dif u + C_H\notag\\
    &=T\times \int_{-1}^{1}e^{-u-1} \abs{u}^{2H-1}\mathrm{sgn}(u)(1+u)\dif u + C_H. 
\end{align*}

Similarly, the asymptote of triple integral $P_2(T)$ has the same slope as the asymptote of the following integral \begin{align*}
& \int_{[1,T-1]^2} e^{-|t_1-s_1|}\dif s_1 \dif t_1\int^{t_1+1 }_{t_1-1 } \sgn(t_1-t_2)|t_1-t_2|^{2H-1} e^{-|t_2-(s_1+1) |}\sgn\big(s_1+1 -t_2\big) \dif t_2 \\   &=\int_1^{T-1}\dif w \int_0^{w-1} e^{-v}\dif v\int_{-1}^1 \big[-e^{-\abs{u-v-1}}+e^{-\abs{u+v-1}}\sgn(u+v-1)\big]\abs{u}^{2H-1}\sgn(u)\dif u\end{align*}
Thus, we get that the asymptote of the triple integral $P_2(T)$ is:
\begin{align*} 
&T\times \int_0^{\infty} e^{-v}\dif v\int_{-1}^1 \big[-e^{-\abs{u-v-1}}+e^{-\abs{u+v-1}}\sgn(u+v-1)\big]\abs{u}^{2H-1}\sgn(u)\dif u\\
&=T\times\int_{-1}^{1}e^{u-1} \abs{u}^{2H-1}\mathrm{sgn}(u)(u-1)\dif u  + C_H.
\end{align*}

Finally, we combine the two asymptotes of $P_1(T)$ and $P_2(T)$ to obtain the asymptote \eqref{p jianjiexian} of $P(T)$.
\end{proof}
 



\begin{thebibliography}{10}
		\bibitem{SV 18} Sottinen T, Viitasaari L. Parameter estimation for the Langevin equation with stationary-increment Gaussian noise. Stat Inference Stoch Process, 2018, \textbf{21}(3): 569-601
	
	\bibitem{Dou 22}Douissi S, Es-Sebaiy K, Kerchev G, Nourdin I. Berry-Esseen bounds of second moment estimators for Gaussian processes observed at high frequency. Electron J Statist, 2022, \textbf{16}(1): 636-670
			
		\bibitem{HuNualartandZhou2019}Hu Y, Nualart D, Zhou H. Parameter estimation for fractional Ornstein–Uhlenbeck processes of general Hurst parameter. Statistical Inference for Stochastic Processes, 2019, \textbf{22}(1): 111-142
	
		\bibitem{Jolis}Jolis M. On the Wiener integral with respect to the fractional Brownian motion on an interval. Journal of mathematical analysis and applications, 2007, \textbf{330}(2): 1115-1127	
		
		\bibitem{chenandli2021}Chen Y, Li Y. Berry-Ess\'een bound for the parameter estimation of fractional Ornstein-Uhlenbeck processes with the hurst parameter $H\in(0, \frac1 2)$. Communications in Statistics-Theory and Methods, 2021, \textbf{50}(13): 2996-3013
		
		\bibitem{Kim3} Kim Y T,  Park H. S. Optimal Berry-Ess\'{e}en bound for statistical estimations and its application to SPDE.  Journal of Multivariate Analysis, 2017, \textbf{155}: 284-304.
		
		\bibitem{CZ 21} Chen Y,  Zhou H. Parameter estimation for an Ornstein-Uhlenbeck process driven by a general gaussian noise. Acta Mathematica Scientia, 2021, \textbf{41B}(2): 573-595.
        
        \bibitem{Chen guli 22}Chen Y, Gu X M, Li Y. Parameter Estimation for an Ornstein-Uhlenbeck Processes driven by a general Gaussian Noise with Hurst Parameter $H\in (0,\,\frac12)$. arXiv preprint arXiv:2111.15292, 2021.
		
		\bibitem{Cheridito 03}Cheridito P, Kawaguchi H, Maejima M. Fractional Ornstein-Uhlenbeck processes. Electron J Probab, 2003, \textbf{8}(3), 14 pp.
		
		
		\bibitem{HuandNualart2010}Hu Y, Nualart D. Parameter estimation for fractional Ornstein-Uhlenbeck processes. Statistics \& probability letters, 2010, \textbf{80}(11-12): 1030-1038
		
		\bibitem{chenkuangandli2019}Chen Y, Kuang N H, Li Y. Berry-Ess\'een bound for the parameter estimation of fractional OrnsteinUhlenbeck processes. Stochastics and Dynamics, 2020, \textbf{20}(04): 2050023
		
		\bibitem{Mishura 08}Mishura Y S. Stochastic calculus for fractional Brownian motion and related processes. Lecture Notes in Mathematics 1929. Springer-Verlag, Berlin, 2008
		
		\bibitem{PipTaq 00}Pipiras V, Taqqu M. S. Integration questions related to fractional Brownian motion.
        Probability Theory and Related Fields, 2000, \textbf{118}(2):251-91
        
		\bibitem{David Nualart}Nualart D. The Malliavin calculus and related topics. Springer, 2006
	
		\bibitem{Tao}Tao T. An introduction to measure theory. Providence: American Mathematical Society, 2011
		
		\bibitem{withdingzhen}Chen Y, Ding Z, Li Y. Berry-Ess\'een bounds and almost sure CLT for the quadratic variation of a general Gaussian process. arXiv preprint arXiv:2106.01851, 2021
	
	

\end{thebibliography}
\end{document}